\newcommand\calA{\mathcal{A}}
\newcommand\calC{\mathcal{C}}
\newcommand\calH{\mathcal{H}}
\newcommand\calL{\mathcal{L}}
\newcommand\calO{\mathcal{O}}
\newcommand\calT{\mathcal{T}}
\newcommand\NN{\mathbb{N}}
\newcommand\QQ{\mathbb{Q}}
\newcommand\RR{\mathbb{R}}
\newcommand\ZZ{\mathbb{Z}}
\newcommand\cL{{\mathcal L}}
\providecommand{\abs}[1]{\left\lvert#1\right\rvert}
\providecommand{\floor}[1]{\left\lfloor#1\right\rfloor}
\providecommand{\ceil}[1]{\left\lceil#1\right\rceil}
\def\vec#1{\mathchoice{\mbox{\boldmath$\displaystyle\bf#1$}}{\mbox{\boldmath$\textstyle\bf#1$}}{\mbox{\boldmath$\scriptstyle\bf#1$}}{\mbox{\boldmath$\scriptscriptstyle\bf#1$}}}
\DeclareMathOperator{\Vol}{Vol}
\DeclareMathOperator{\Kn}{Kn}
\DeclareMathOperator{\lcm}{lcm}
\title{Counting with two-level polynomials}
\author{
Tristram Bogart\authornote{1}
\and
Kevin Woods\authornote{2}
}
\begin{document}
\maketitle

\begin{abstract}
We examine combinatorial counting functions with two parameters, $n$ and $q$. For fixed $q$, these functions are (quasi-)polynomial in $n$. As $q$ varies, the degree of this polynomial is itself polynomial in $q$, as are the leading coefficients. We carefully define these two-level polynomials, lay out their basic algebraic properties, and provide a schema for showing a function is a two-level polynomial. Using the schema, we prove that a variety of counting functions arising in different areas of combinatorics are two-level polynomials. These include chromatic polynomials for many infinite families of graphs, partitions of an integer into a given number of parts, placing non-attacking chess pieces on a board, Sidon sets, and Sheffer sequences (including binomial type and Appell sequences).
\end{abstract}


\section{Introduction}
Many combinatorial problems come in families parametrized by two natural numbers playing different roles. For example, how many partitions of the integer $n$ are there into $q$ parts?  How many proper colorings with $n$ colors are there of the vertices of the grid graph $P_q\times P_q$? How many Sidon sets are there of length $n$ and size $q$?  How many ways are there to place $q$ queens on an $n \times n$ chessboard such that no two queens attack each other? How about $q$ knights? The $q$-queens problem, analyzed by Chaiken, Hanusa, and Zaslavsky \cite{CHZ} using Ehrhart theory (see \cite{BR}), was our inspiration for this paper.

These counting functions share a few common characteristics. For fixed $q$, they are polynomial in $n$ (or quasi-polynomial, as defined later). If we are interested in the asymptotics for large $n$, we therefore want to compute the leading coefficients of this polynomial. As $q$ changes, these counting functions are roughly exponential in $q$. To be precise, the degrees of these polynomials in $n$ are themselves polynomial in $q$, and we would like to know how the leading coefficients change with $q$. If $d$ is the degree of a polynomial in $n$ and $r\in\NN=\{0,1,2,\ldots\}$, define the \emph{codegree} $r$ term/coefficient to mean the degree $d-r$ term/coefficient. We will show that in all of these families and others, for many or all values of $r$, the codegree $r$ coefficient is a \emph{polynomial} in $q$. We call sequences of functions $f_q(n)$ with this property \emph{two-level polynomials}, to be defined precisely in 
Definitions~\ref{def:two-level} and \ref{def:two-levelQP}.

\begin{example}
\label{ex:PqxPq}
Let $f_q(n)$ be the number of proper vertex colorings of $P_q\times P_q$ with $n$ colors, where $P_q$ is a path with $q$ vertices, the product is the standard Cartesian product (so $P_q\times P_q$ is a square grid), and proper coloring means that no adjacent vertices can be the same color. For fixed $q$, this is the \emph{chromatic polynomial} \cite{RT}, a monic polynomial of degree $\abs{V}=q^2$, with codegree one coefficient ${-\abs{E}}=-2q(q-1)$. Since the graph contains no 3-cycles, the codegree two coefficient is  known to be $\binom{\abs{E}}{2}$. In other words,
\[f_{q}(n) = 1n^{q^2}-\left(2q^2-2q\right)n^{q^2-1}+\left(2q^4-4q^3+q^2+q\right)n^{q^2-2}-\cdots.\]
We will confirm in Section \ref{sect:finite} that (with a caveat to be explored soon), for fixed $r$, the codegree $r$ term is a polynomial in $q$. Fully computing $f_q(n)$  is notoriously difficult \cite{RT}.
\end{example}

We will demonstrate that a wide variety of counting functions are two-level polynomials, and we believe that most of these results are fundamentally new. Furthermore, they show a surprising common structure in apparently unrelated problems. These results do not directly resolve any of the counting problems, but they do open future research directions. In the best case, one would aim to resolve an entire family of counting problems by calculating the sequence of coefficient polynomials via some sort of recursion. Short of this, one could calculate successive approximations, simultaneously valid for all $q$, by calculating the first few of the coefficient polynomials. In the case of quasi-polynomials, it also may be feasible to calculate or bound the periods of the coefficient functions. All of these approaches are pursued for the nonattacking queens problem in \cite{CHZ} and its sequels \cite{CHZ2, CHZ3, CHZ4, CHZ5, CHZ6, CHZ7}, including generalizing to pieces that can move arbitrarily far in certain directions. For fixed-move pieces like knights, for Sidon sets, and for several families of graph polynomials, any such results would be novel.  

Before giving the definitions, we mention two simple examples that illuminate an important subtlety.

\begin{example}
\label{ex:infinite}
Let $f_q(n)=(n+1)^q$. The codegree $r$ coefficient is the polynomial
\[\binom{q}{r}=\frac{1}{r!}(q)_r=\frac{1}{r!}q(q-1)\cdots(q-r+1),\]
so $f_q(n)$ is a two-level polynomial. Note that this works even when $r>q$: the coefficient of the negative power $n^{q-r}$ should be zero, and the polynomial indeed evaluates to zero.

In Section \ref{sect:alg}, we will discuss algebraic properties of two-level polynomials, including that they are preserved under products. Indeed, $\big(f_q(n)\big)^2 = (n+1)^{2q}$ is still a two-level polynomial, with codegree $r$ coefficient $\binom{2q}{r}$.
\end{example}

\begin{example}
\label{ex:depth}
We want to call $f_q(n)=n^q+n^{q-1}+\cdots+1$ a two-level polynomial, since the codegree $r$ coefficient is the polynomial (in $q$) 1, even though this is not valid for $r>q$ (for which the coefficient is 0). We say that this two-level polynomial has \emph{depth function} $q$, because the terms of codegree up to $q$ agree with polynomials in $q$ (to be defined precisely in Definition~\ref{def:two-level}). By contrast, we say Example~\ref{ex:infinite} has \emph{infinite depth}.

This seems a small quibble, since the depth is sufficient to cover all of the actual terms in $f_q(n)$, but note that \[\big(f_q(n)\big)^2 = n^{2q}+2n^{2q-1}+\cdots + qn^{q+1}+(q+1)n^{q}+qn^{q-1} + \cdots + 2n +1,\]
is a two-level polynomial of degree function $2q$ whose depth function remains $q$: only the codegree $r$ coefficients with $r\le q$ agree with the polynomial (in $q$) formula $r+1$.
\end{example}

We will see many examples of natural counting problems (including Example~\ref{ex:PqxPq}) where the depth is less than the degree. We are now ready for some precise definitions.

\begin{definition} A \emph{numerical polynomial} is a polynomial $g(x) \in \QQ[x]$ such that $g(q) \in \ZZ$ for every $q \in \ZZ$.
\end{definition}

For example, $g(x) = \frac{x(x-1)}{2} = \frac{x^2}{2} - \frac{x}{2}$ is a numerical polynomial: although its coefficients are rational, $g(q) = \binom{q}{2}$ is integral for every $q \in \ZZ$.

\begin{definition}
\label{def:two-level}
Let $g$ be an eventually nonnegative numerical polynomial, let $S=\{q\in\NN:\ g(q)\ge 0\}$, and let $e:S \to \NN \cup \{-1,\infty\}$.
\begin{enumerate}
\item[(a)] A \emph{two-level polynomial} of degree function $g=g(q)$ and depth function $e=e(q)$ is an infinite sequence of polynomials $\{f_q\}_{q\in S}$, with $f_q(n)$  of degree $g(q)$, that has the following property: writing \[f_q(n) = \sum_{r=0}^{\infty}c_r(q)n^{g(q)-r},\] where $c_r(q)=0$ if $r>g(q)$, there exist polynomials $\{\phi_r(q)\}_{r\ge 0}$ such that, for every $q\in S$ and every $r \leq e(q)$, we have $c_r(q) = \phi_r(q)$. 
 \item[(b)] For $q\in \NN\setminus S$, define $f_q(n)=0$ and $e(q)=-1$, extending the definition of $f_q(n)$ to all $q\in\NN$.
 \end{enumerate}
\end{definition}

\begin{remark}
\label{rem:neg1}
Depth $-1$ means that not even the leading coefficient $c_0(q)$ needs to agree with the polynomial $\phi_0(q)$, so that $\phi_r(q)$ can be completely unrelated to the value of $f_q(n)$. This convention allows for algebraic manipulation and even infinite sums (cf.\ Proposition~\ref{prop:sequence}) with minimal notation.

For example, for fixed $r\in\NN$, $n^{q-r}$ is only a polynomial when $q\ge r$. But if we define the two level polynomial $f^{(r)}_q(n)$ based on the sequence $\{n^{q-r}\}_{q\ge r}$, then $f^{(r)}_q(n)=0$ for $q<r$, and the infinite sum $\sum_{r=0}^\infty f^{(r)}_q(n)$ makes sense. Indeed, it is the degree $q$, depth $q$ two-level polynomial in Example~\ref{ex:depth}.
\end{remark}

\begin{remark}
 Infinite depth, by contrast, implies that even the infinite number of $r>g(q)$ still have $\phi_r(q)=c_r(q)=0$, as in Example \ref{ex:infinite}.
\end{remark}

\begin{remark} \label{rem:mindepth}
If the definition of two-level polynomial is satisfied for some depth function $e$, then it is also satisfied for any depth function $e'$ such that $e'(q) \leq e(q)$ for every $q$. Whenever we affirm that a certain sequence is a two-level polynomial of depth function $e$, we are not claiming that $e$ is minimal. The degree function, on the other hand, is unique. 
\end{remark}

Many counting problems we will encounter are not quite polynomial in $n$, in that they will also depend on $n\bmod p$, for some period $p$.

\begin{definition}
A function $f:\NN\rightarrow \QQ$ is a \emph{quasi-polynomial} (of degree $d$) if there exist periodic functions $b_r:\NN\rightarrow \QQ$ ($0\le r\le d$, with $b_0$ not identically zero) such that
\[f(n)=\sum_{r=0}^d b_r(n)n^{d-r}.\]
A common period of $b_0,\ldots,b_d$ is called a \emph{period} of $f$.
\end{definition}

\begin{example}
By a partition of $n$ into $q$ parts, we mean a way to write $n$ as a sum of $q$ positive integers, where the order of the summands doesn't matter (so $3$ has only one partition into two parts: $1+2$). Let $f_q(n)$ be the number of partitions of $n$ into $q$ parts.

The partitions of $n$ into two parts are $1+(n-1),\ 2+(n-2),\ \ldots,\ \floor{n/2}+\ceil{n/2}$, so
\[f_2(n)=\floor{\frac{n}{2}} = \frac{1}{2}n+\begin{cases} 0,&\text{if $n=0\bmod 2$,}\\ -\frac{1}{2},&\text{if $n=1\bmod 2$.}\end{cases}\]
This is a quasi-polynomial of period 2. For fixed $q$, $f_q(n)$ is a quasi-polynomial of period $\lcm(1,2,\ldots,q)$ (see \cite{Cim}). For example,
\begin{align*}
p_3(n)&=\frac{1}{12}\left(n^2 + [0,-1,-4,3,-4,-1]_6\right)\text{ and }\\
p_4(n)&=\frac{1}{144}\left(n^3+3n^2+[0,-9]_2n+[0,5,-20,-27,32,-11,-36,5,16,-27,-4,-11]_{12}\right),
\end{align*}
where $[a_0,\ldots,a_{p-1}]_p$ is the periodic function $a_{n\bmod p}$.
\end{example}

Many examples of quasi-polynomials come from Ehrhart theory (see Section \ref{sect:Ehrhart}), and it is typical that the period of the codegree $r$ coefficient $c_r$ grows with $r$. This inspires the following definition of a two-level quasi-polynomial.

\begin{definition}
\label{def:two-levelQP}
Let $g$ be an eventually nonnegative numerical polynomial, $S=\{q\in\NN:\ g(q)\ge 0\}$, $e:S \to \NN \cup \{-1,\infty\}$, and $p:\NN\to\ZZ_+$ such that $p(r)$ divides $p(r+1)$ for all $r$.
\begin{enumerate}
\item[(a)] A \emph{two-level quasi-polynomial} of degree function $g=g(q)$, depth function $e=e(q)$, and period function $p=p(q)$  is an infinite sequence of quasi-polynomials $\{f_q\}_{q\in S}$, with $f_q(n)$ of degree $g(q)$, that has the following property. Writing
\[f_q(n) = \sum_{r=0}^{\infty}c_r(q;n)n^{g(q)-r},\]
where $c_r(q;n)$ is a periodic function of $n$ (for fixed $q$) and $c_r(q;n)=0$ if $r>g(q)$, there exist polynomials $\{\phi_r(q;i)\}_{r\ge 0,\ 0\le i< p(r)}$ such that, for every $n\in \NN$, $q\in S$, $r \leq e(q)$, we have \[c_r(q;n) = \phi_{r}\big(q;\, n\bmod p(r)\big).\] 
 \item[(b)] As before, for $q\in \NN\setminus S$, define $f_q(n)=0$ and $e(q)=-1$.
 \end{enumerate}
\end{definition}

\begin{remark}
Just as we observed about the depth function in Remark \ref{rem:mindepth}), the period function is not unique. If $p$ is a valid period function, then so is any function $p'$ such that $p(r)$ divides $p'(r)$ for every $r$. Again, whenever we affirm that $p$ is a period function we are not claiming that it is minimal. 
\end{remark}

\begin{example} An odd feature of this definition is illustrated by the sequence $f_q(n) = n^{2q} + (-1)^n n^q$. This is not a two-level \emph{polynomial} because, for fixed $q$, it does not yield a polynomial in $n$ but rather a quasi-polynomial. However, it is a two-level \emph{quasi}-polynomial of depth function $q-1$ and coefficient functions $\phi_0(q) = 1$, $\phi_r(q) = 0$ for $r > 1$. That is, the period function $p(r)$ is always one! This phenomenon will recur in the context of integer partitions in Section \ref{sect:gfs}.
\end{example}

In Section~\ref{sect:alg}, we will develop an algebra of two-level polynomials, constructing some basic examples and detailing how to use them as building blocks for more complicated ones.

In Section~\ref{sect:schema}, we develop a schema (Theorem~\ref{thm:schema}) that is useful for proving that counting functions are two-level polynomials, and we illustrate it with an intermediate-level example:  $f_q(n)=n(n+1)\cdots(n+q-1)$, the rising factorial.

In Section~\ref{sect:chromatic}, we apply this schema to the chromatic polynomials of some important families of graphs. First we look at two intermediate-level examples, the complete bipartite graph $K_{q,q}$ and the product of a complete graph and a path $K_q\times P_q$, and we gain comfort with the schema by giving formulas for their two-level chromatic polynomials. Then we analyze more complex examples, such as Kneser graphs $\Kn(q,k)$, Johnson graphs $J(q,k)$, and powers of complete graphs $K_q^k$ (where $k$ is fixed and $q$ is our usual parameter); for all of these, we are not aware of any known formula for their chromatic polynomials. We indicate how the schema can also be applied to characteristic polynomials and matching polynomials of the same families of graphs.

In Section~\ref{sect:Ehrhart} we apply the schema in the context of Ehrhart theory. We first rephrase the proof of the chess queens result (which includes all pieces that move arbitrarily far in straight lines) in \cite{CHZ} in our language. Then we give an analagous proof for the number of (ordered) Sidon sets \cite{Sidon}. That is, we show that both of these counting functions are two-level quasi-polynomials of infinite depth. We indicate what hypotheses appear to be necessary in order to apply the schema to other families of counting problems phrased in Ehrhart theory language. We give a hint at the many ways of generalizing this by also analyzing placements of $q$ white and $q$ black queens on an $n\times n$ board so that no queens of opposite colors are attacking each other.

All of the above examples have infinite depth. In Section~\ref{sect:finite}, we apply this schema to the chromatic polynomial of $P_q^k$ (for fixed $k$, so a $k$-dimensional grid graph) which has degree function $q^k$ and finite depth function $q$. We similarly apply it to fixed powers of the cycle graph $C_q$, for which the depth function is $q-2$, and discuss other possible extensions.

In Section~\ref{sect:gfs}, we examine two examples where generating functions can be used to prove that $f_q(n)$ is a two-level polynomial. First, any Sheffer sequence of polynomials (a tool in umbral calculus \cite{RKO}) is (after multiplying by $c^q$ for some constant $c$) a two-level polynomial of infinite depth; these include binomial type and Appell sequences, and in particular include falling factorials, Bernoulli polynomials, Hermite polynomials, and Touchard polynomials. Second, the number of partitions of $n$ into $q$ parts is (after multiplication by $q!(q-1)!$) a two-level quasi-polynomial of degree function $q-1$, depth function $\floor{(q-1)/2}$, and period function one. 

In Section~\ref{sec:knights}, we answer an open problem from \cite{CHZ}: what about placing non-attacking chess pieces like knights, which have only a finite set of moves? We obtain a two-level polynomial, but must carefully analyze how it is not valid for small $n$.

In Section~\ref{sec:conc}, we point to some open questions and future directions for research.

\section{The algebra of two-level polynomials}
\label{sect:alg}
We lay out some basic algebraic properties of the class of two-level quasi-polynomials that will be applied to combinatorial examples in the following sections. The proofs in this section tend to be intuitive, but calculation intensive. Since a two-level polynomial is simply a two-level quasi-polynomial such that each $f_q$ is a polynomial, these results all apply to two-level polynomials as well.

\begin{proposition} \label{prop:bivariate}
Let $f(q;n)$ be a function that is polynomial in $q$, and also a quasi-polynomial in $n$ of degree $d$ and period $p_0$. Then $f_q(n) = f(q;n)$ is a two-level quasi-polynomial of constant degree function $g(q) = d$, infinite depth, and constant period function $p(r)=p_0$.\end{proposition}

\begin{proof} 
Given such a function $f(q;n)$, we can write
  \[ f_q(n) = f(q;n) = \sum_{r=0}^{d} a_r(n) \left( \sum_{\ell=0}^{\delta_r} b_{r, \ell}q^\ell \right) n^{d-r}\]
where $a_0(n), \dots, a_d(n)$ are periodic functions of common period $p_0$ and $\delta_0, \dots, \delta_d$ are natural numbers. 
    This sequence satisfies the definition of a two-level quasi-polynomial of constant degree $g(q)=d$ and infinite depth by taking $p(r)=p_0$ for every $r$ and   
    \[ \phi_r(q;i) = \begin{cases} a_r(i) \left( \displaystyle\sum_{\ell=0}^{\delta_r} b_{r, \ell}q^\ell \right), & \text{if $r\le d$,} \\
      0, & \text{ otherwise.} \end{cases} \qedhere \]
\end{proof}
    
  In particular, we typically start with (quasi-)polynomials in just one of the variables $q$ or $n$, which are immediately seen to be two-level polynomials:
\begin{corollary} \label{cor:constantdegree}
\ 
      \begin{enumerate}
      \item[(a)] Let $f(q)$ be a polynomial in $q$. Then the sequence $f_q(n) = f(q)$ is a two-level polynomial of degree function zero and infinite depth.
      \item[(b)] Let $f(n)$ be a quasi-polynomial in $n$ of degree $d$ and period $p_0$. Then the sequence $f_q(n) = f(n)$ is a two-level quasi-polynomial of constant degree function $g(q)=d$, infinite depth, and constant period function $p(r) = p_0$. 
      \end{enumerate}
     \end{corollary}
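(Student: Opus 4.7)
My plan is to derive both parts as immediate special cases of Proposition~\ref{prop:bivariate} by viewing a one-variable function as a bivariate function that is trivially constant in the other variable.

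For part (a), I would set $\widetilde{f}(n,q) = f(q)$, regarded as a function of $n$ and $q$. As a function of $q$ alone it is a polynomial by hypothesis, and as a function of $n$ (for each fixed $q$) it is constant, hence a quasi-polynomial of degree $0$ and period $p_0 = 1$. Proposition~\ref{prop:bivariate} then applies and yields a two-level polynomial of constant degree function (namely the zero function) and infinite depth. I would note explicitly that the ``degree zero'' claim comes from the fact that $\widetilde{f}(n,q)$ has degree $0$ in $n$.

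For part (b), I would symmetrically set $\widetilde{f}(n,q) = f(n)$. As a function of $q$ this is a constant, hence a polynomial; as a function of $n$ it is, by hypothesis, a quasi-polynomial of period $p_0$. Proposition~\ref{prop:bivariate} applies directly to give a two-level quasi-polynomial of constant degree function, infinite depth, and constant period $p_0$.

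There is essentially no obstacle here; the only thing to be slightly careful about is the bookkeeping of the data $(g, e, p)$ from Definition~\ref{def:two-levelQP}. Specifically, I would confirm that the degree function produced by Proposition~\ref{prop:bivariate} is identically $0$ (so $S = \NN$), that the depth function $e(q) = \infty$ is inherited, and that the period function $p(r)$ is the constant $p_0$, matching the claim. No additional combinatorial or algebraic content is required.
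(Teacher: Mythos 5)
Your proposal is correct and is exactly the derivation the paper intends: the corollary is stated immediately after Proposition~\ref{prop:bivariate} as a direct specialization, with the single-variable function viewed as a bivariate function that is constant (hence polynomial, resp.\ quasi-polynomial of period $1$) in the other variable. The bookkeeping you describe for $(g,e,p)$ matches the paper's claims, so nothing further is needed.
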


The most straightforward arithmetic operation that we can apply to two-level polynomials is the product. The degree, depth, and period functions behave as one might expect.
 \begin{proposition} \label{prop:products}
   Let $f_q(n)$ and $f'_q(n)$ be two-level quasi-polynomials
   of respective degree functions $g(q)$ and $g'(q)$, depth functions $e(q)$ and $e'(q)$, and period functions $p(r)$ and $p'(r)$. Then the product $f_q(n)\cdot f'_q(n)$ is a two-level quasi-polynomial of degree function $g + g'$, depth function $\min(e,e')$, and period function $\lcm(p, p')$.
 \end{proposition}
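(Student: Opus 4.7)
The plan is to expand $f_q$ and $f'_q$ in their two-level forms, multiply the expansions, collect like powers of $n$, and then verify each of the three assertions. Writing
\[ f_q(n) = \sum_{r \ge 0} c_r(n,q)\, n^{g(q)-r}, \qquad f'_q(n) = \sum_{s \ge 0} c'_s(n,q)\, n^{g'(q)-s}, \]
and grouping by $t = r+s$ gives
\[ f_q(n)\, f'_q(n) = \sum_{t \ge 0} C_t(n,q)\, n^{(g+g')(q) - t}, \qquad C_t(n,q) := \sum_{r+s=t} c_r(n,q)\, c'_s(n,q). \]
The degree claim is immediate on $S \cap S'$; on its complement at least one factor vanishes by the extension convention of Definition~\ref{def:two-levelQP}(b), which also makes the depth condition vacuous there.

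For the period, I would set $P(t) := \lcm(p(t), p'(t))$. Since $p(r) \mid p(t)$ for every $r \le t$ and similarly for $p'$, each summand $c_r(n,q)\, c'_s(n,q)$ with $r+s = t$ is periodic in $n$ of period dividing $P(t)$, so $C_t$ is too. The required divisibility $P(t) \mid P(t+1)$ then follows from the analogous divisibilities for $p$ and $p'$.

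For the depth, fix $q \in S \cap S'$ and $t \le \min(e(q), e'(q))$. Then every decomposition $r+s = t$ satisfies $r \le e(q)$ and $s \le e'(q)$, so by hypothesis $c_r(n,q) = \phi_{r, n \bmod p(r)}(q)$ and $c'_s(n,q) = \phi'_{s, n \bmod p'(s)}(q)$. Because $p(r) \mid P(t)$ and $p'(s) \mid P(t)$, each of these residues is determined by $n \bmod P(t)$ alone. I would therefore define, for $0 \le i < P(t)$,
\[ \Phi_{t,i}(q) := \sum_{r+s=t} \phi_{r,\, i \bmod p(r)}(q) \cdot \phi'_{s,\, i \bmod p'(s)}(q), \]
a polynomial in $q$, and verify directly that $C_t(n,q) = \Phi_{t, n \bmod P(t)}(q)$.

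The hard part will be the bookkeeping among the two period functions; the feature that makes it work is the monotone divisibility $p(r) \mid p(t)$ for $r \le t$, which ensures that the finer residue information needed in each factor is recoverable from the single coarser residue $n \bmod P(t)$. A final small check is that $C_t(n,q) = 0$ whenever $t > (g+g')(q)$: any decomposition $t = r+s$ would then force $r > g(q)$ or $s > g'(q)$, so the corresponding $c_r$ or $c'_s$ vanishes.
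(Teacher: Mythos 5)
Your proposal is correct and follows essentially the same route as the paper: expand both factors, collect terms by total codegree $t=r+s$, use the divisibility $p(r)\mid p(t)$ and $p'(s)\mid p'(t)$ to pass to the common period $\lcm(p(t),p'(t))$, and define the new coefficient polynomials as the convolution of the old ones. If anything, your indexing $\phi_{r,\,i\bmod p(r)}$ is slightly more careful than the paper's shorthand, and your closing checks (vanishing for $t>(g+g')(q)$ and the convention for $q\notin S\cap S'$) are consistent with what the paper leaves implicit.
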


 \begin{proof} Write
 \[f_q(n) = \sum_{r=0}^{\infty}c_r(q;n)n^{g(q)-r}\quad\text{and}\quad f'_q(n) = \sum_{r=0}^{\infty}c'_r(q;n)n^{g(q)-r},\]
 and let $\{\phi_r(q;i)\}_{r \geq 0,\ 0 \leq i < p(r)}$ and $\{\phi'_{r}(q;i)\}_{r \geq 0,\ 0 \leq i < p'(r)}$ be the respective coefficient polynomials, as in Definition~\ref{def:two-levelQP}. The product is given by
 \begin{align*} f_q(n)f'_{q}(n) & = \sum_{s=0}^{\infty}\sum_{s'=0}^{\infty} c_s(q;n)n^{g(q)-s} c'_{s'}(q;n) n^{g'(q)-s'} \\
& = \sum_{r=0}^{\infty} \sum_{s=0}^r c_s(q;n) c_{r-s}'(q;n) n^{g(q)+g(q')-r}
      \end{align*}
which is a quasi-polynomial in $n$ of degree function $(g+g')$. For any $r \leq \min\big(e(q),e'(q)\big)$, the coefficient of $n^{g(q)+g'(q)-r}$ is
   \begin{align*} \sum_{s=0}^r c_s(q;n) c_{r-s}'(q;n) & = \sum_{s=0}^r \phi_{s}\big(q;\, n\bmod p(s)\big) \phi'_{r-s}\big(q;\, n\bmod p'(r-s)\big) \\
     & = \sum_{s=0}^r \phi_{s}\Big(q;\ n\bmod \lcm\big(p(r),p'(r)\big)\Big) \phi'_{r-s}\Big(q;\ n\bmod \lcm\big(p(r),p'(r)\big)\Big),
   \end{align*}
       where we use that $p(s)$ divides $p(r)$ and $p'(s-r)$ divides $p'(r)$ for $0 \leq s \leq r$. By taking coefficient polynomials 
   \[  \psi_{r}(q;i) := \sum_{s=0}^r \phi_{s}(q;i) \phi'_{r-s}(q;i), \]
   it follows that the product is a two-level quasi-polynomial of depth function $\min(e,e')$ and period function $\lcm(p,p')$.
 \end{proof}

We'll need the following technical lemma later: if one of the two-level polynomials in the product is simply a polynomial in $q$, then we can do slightly better than Proposition~\ref{prop:products} in terms of the depth.

\begin{lemma}
\label{lem:poly_prod}
Let $h(q)$ be a polynomial in $q$ and $f_q(n)$ be a two-level quasi-polynomial of depth function $e(q)$. Then $h(q)f_q(n)$ has depth function
\[\begin{cases} \infty,&\text{if $h(q)=0$,}\\ e(q),&\text{else.} \end{cases}\]
\end{lemma}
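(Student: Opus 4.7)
The plan is to derive both branches of the claim from Proposition~\ref{prop:products} with minimal extra work. By Corollary~\ref{cor:constantdegree}(a), the polynomial $h(q)$ is itself a two-level polynomial of degree zero and infinite depth. Multiplying it against $f_q(n)$ and invoking Proposition~\ref{prop:products} yields a two-level quasi-polynomial of degree $g$, period $p$, and depth $\min(\infty, e(q)) = e(q)$ at every $q \in S$. This already handles the ``else'' branch.

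For the ``$h(q)=0$'' branch, I would examine the coefficient polynomials produced in the proof of Proposition~\ref{prop:products} more closely. If $\phi'_{r,i}(Q)$ are the coefficient polynomials for $f_q(n)$, then those for $h(q)f_q(n)$ may be taken to be $\psi_{r,i}(Q) = h(Q)\phi'_{r,i}(Q)$. At any specific $q \in S$ with $h(q) = 0$, the actual coefficients of the product, namely $h(q)c_r(n,q)$, and the candidate polynomial values $\psi_{r,i}(q) = h(q)\phi'_{r,i}(q)$, are simultaneously zero for every $r$ and every residue $i$. Hence the agreement condition of Definition~\ref{def:two-levelQP}(a) holds vacuously for all $r \geq 0$, forcing the depth at such $q$ to be $\infty$. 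The degenerate case $h \equiv 0$ is the same observation applied uniformly in $q$.

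The only anticipated obstacle is a minor bookkeeping point: when $h(q) = 0$ at an isolated $q \in S$, the product $h(q)f_q(n)$ vanishes identically rather than having degree exactly $g(q)$ as Definition~\ref{def:two-levelQP}(a) nominally asks. Since $h$ has only finitely many zeros, this is absorbed either by removing those values from $S$ or by tolerating a vanishing leading coefficient at isolated $q$; neither convention affects the depth conclusion. In essence, the lemma is just a sharpening of Proposition~\ref{prop:products} that exploits the fact that a common polynomial factor $h(q)$ in every $\psi_{r,i}$ forces all coefficients to vanish at the zeros of $h$.
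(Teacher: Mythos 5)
Your proposal is correct and follows essentially the same route as the paper: the authors likewise take the coefficient polynomials of $h(q)f_q(n)$ to be $h(q)\phi_{r,i}(q)$, observe that at any $q$ with $h(q)=0$ both the actual coefficients and the candidate polynomial values vanish for every $r$ (giving infinite depth there), and dispatch the remaining case via Corollary~\ref{cor:constantdegree} and Proposition~\ref{prop:products}. Your extra bookkeeping remark about the degree dropping at isolated zeros of $h$ is a reasonable observation the paper leaves implicit.
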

\begin{proof}
Write $f_q(n) = \sum_{r=0}^{\infty}c_r(q;n)n^{g(q)-r}$, and let $\{\phi_r(q;i)\}$ be the coefficient polynomials. Then $h(q)f_q(n)=\sum_{r=0}^{\infty}h(q)c_r(q;n)n^{g(q)-r}$ and the coefficient polynomials are $\{h(q)\phi_r(q;i)\}$. If $h(q)=0$, then
\[h(q)c_r(q;n) = 0 = h(q)\phi_{r}\big(q;\, n\bmod p(r)\big)\]
regardless of $r$, that is, the depth is infinite. The remainder follows from Corollary~\ref{cor:constantdegree} and Proposition~\ref{prop:products}.
\end{proof}

 We will also need to take polynomial (in $q$) powers of two-level polynomials. There is an immediate problem here, suggested by the example of $(2n)^q = 2^qn^q$ which is \emph{not} a two-level polynomial since its leading coefficient is exponential in $q$. However, the leading coefficient turns out to be the only problem. If the base two-level polynomial is monic (that is, if the leading coefficient is the constant 1), then we still get a two-level polynomial. If the leading coefficient is a different constant $c$, then we get a two-level polynomial after normalization.

\begin{proposition} \label{prop:powers}
 Let $h(q)$ be a numerical polynomial and $f_q(n)$ be a two-level quasi-polynomial of degree function $g(q)$, depth function $e(q)$, and period function $p(r)$. Suppose the leading coefficient of $f_q(n)$ is a constant $c$. Then  $\left\{f_q(n)^{h(q)}/c^{h(q)}\right\}_{g(q),h(q)\ge 0}$ is a monic two-level quasi-polynomial of degree function $gh$, depth function $e$, and period function $p$.  
\end{proposition}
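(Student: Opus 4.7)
The plan is to normalize, factor out the leading term $n^{g(q)}$, and apply a formal binomial expansion in the variable $n^{-1}$. Setting $\tilde c_r(n,q) = c_r(n,q)/c$ and $\tilde\phi_{r,i}(q) = \phi_{r,i}(q)/c$, I would write
\[f_q(n)/c = n^{g(q)}\bigl(1+y_q(n)\bigr), \quad \text{where } y_q(n) = \sum_{r\geq 1}\tilde c_r(n,q)\, n^{-r}\]
is a polynomial in $n^{-1}$ with no constant term. Raising to the $h(q)$-th power gives
\[f_q(n)^{h(q)}/c^{h(q)} = n^{g(q)h(q)}\sum_{k=0}^{\infty}\binom{h(q)}{k} y_q(n)^k.\]
Because $y_q(n)$ vanishes at order $n^{-1}$, only $k=0,1,\ldots,R$ can contribute to the codegree $R$ coefficient, so this formally infinite sum becomes a finite one at each codegree.

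Next I would expand $y_q(n)^k$ as a sum over compositions to read off the codegree $R$ coefficient:
\[c_R^*(n,q) = \sum_{k=0}^{R}\binom{h(q)}{k}\sum_{\substack{r_1+\cdots+r_k=R\\ r_i\geq 1}} \tilde c_{r_1}(n,q)\cdots \tilde c_{r_k}(n,q).\]
When $R\leq e(q)$, each summand index satisfies $r_i\leq R \leq e(q)$, so I may replace every $\tilde c_{r_i}(n,q)$ by $\tilde\phi_{r_i,\, n\bmod p(r_i)}(q)$. Because $p(r_i)\mid p(R)$, each such factor depends on $n$ only through $n\bmod p(R)$; and because $h$ is a numerical polynomial, every $\binom{h(q)}{k}=h(q)(h(q)-1)\cdots(h(q)-k+1)/k!$ is a polynomial in $q$. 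Therefore $c_R^*(n,q)$ agrees, on $\{q:R\leq e(q)\}$, with a polynomial $\psi_{R,\,n\bmod p(R)}(q)$, which is exactly what Definition~\ref{def:two-levelQP} requires with depth $e$ and period $p$.

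Finally, the $k=0$ summand gives $c_0^*(n,q)=1$, so the resulting sequence is monic of degree $g(q)h(q)$; the degree function $gh$ is a numerical polynomial since both $g$ and $h$ are. I expect no serious obstacle: the only delicate point is justifying that the infinite binomial expansion truncates at each codegree, which follows immediately from $y_q(n)$ having no constant term. The remaining work is combinatorial bookkeeping for the multinomial-style expansion, plus the routine observation that the divisibility $p(r_i)\mid p(R)$ built into the definition of period function is exactly what makes the periodic dependence on $n$ collapse to period $p(R)$.
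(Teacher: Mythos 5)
Your proposal is correct and follows essentially the same route as the paper: both expand $\bigl(cn^{g(q)}+\text{lower terms}\bigr)^{h(q)}$ by the binomial theorem, observe that only $k\le R$ (resp.\ $s\le r$) and codegrees $r_i\le R$ can contribute at codegree $R$, and then substitute $\phi_{r_i,\,n\bmod p(r_i)}(q)=\phi_{r_i,\,n\bmod p(R)}(q)$ using the divisibility of the period function, with $\binom{h(q)}{k}$ polynomial in $q$. Your bookkeeping via ordered compositions is equivalent to the paper's multinomial-coefficient version, so there is nothing substantive to add.
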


\begin{proof}
  Write $f_q(n) = \sum_{r=0}^{\infty} c_r(n, q)n^{g(q)-r}$ with $c_0(q;n)=c$, and let $\{\phi_r(q;i)\}$ be the coefficient polynomials. Thus
  \begin{align*}
    \frac{f_q(n)^{h(q)}}{c^{h(q)}} & = \sum_{s=0}^{h(q)} \binom{h(q)}{s} \left(c n^{g(q)} \right)^{h(q)-s} \left( \sum_{r=1}^{g(q)} c_r(q;n)n^{g(q)-r }\right)^s \cdot \frac{1}{c^{h(q)}} \\
     & =  \sum_{s=0}^{h(q)} c^{-s}\binom{h(q)}{s} n^{g(q)h(q)-sg(q)} \sum_{\ell_1 + \dots + \ell_{g(q)}=s} \binom{s}{\ell_1, \dots, \ell_{g(q)}} \prod_{j=1}^{g(q)} \left(c_j(q;n)n^{g(q)-j}\right)^{\ell_j}.
  \end{align*}
 For each $q$, this is a monic quasi-polynomial of degree $gh$. For fixed $r \geq 1$, we obtain terms of codegree $r$ by choosing $s \geq 1$ and $\ell_1, \dots, \ell_{g(q)} \geq 0$ summing to $s$ such that
  \[ g(q)h(q) - r = g(q)h(q)-sg(q) + \sum_{j=1}^{g(q)} \left(g(q) - j \right) \ell_j \]
  which reduces after cancellation to
\[ \sum_{j=1}^{g(q)} j \ell_j = r.\]
  In particular, $\ell_j = 0$ whenever $j > r$. Furthermore,
  \[ s = \sum_j \ell_j \leq \sum_j j \ell_j = r. \]
  Since both $s$ and $j$ are bounded by $r$ in the nonzero terms of codegree $r$, we see that the coefficient of $n^{g(q)h(q)-r}$ is
  \[ \sum_{s=0}^{r} c^{-s} \binom{h(q)}{s} \sum_{\ell_1 + \dots + \ell_r = s} \binom{s}{\ell_1, \dots, \ell_r}\prod_{j=1}^r \left( c_j(q;n) \right)^{\ell_j}. \]
  Whenever $r \leq e(q)$, we have $c_j(n, q) = \phi_{j}\big(q;\,n \bmod p(j)\big) = \phi_{j}\big(q;\,n \bmod p(r)\big)$ for $j=1,\dots,r$ and this coefficient becomes
  \[ \sum_{s=1}^{r} \binom{h(q)}{s} \sum_{\ell_1 + \dots + \ell_r = s} \binom{s}{\ell_1, \dots, \ell_r}\prod_{j=1}^r \Big( \phi_{j}\big(q;\, n \bmod p(r)\big) \Big)^{\ell_j} \]
 which is a polynomial in $q$ for each residue class of $n$ mod $p(r)$. The proof follows.   
\end{proof}

Since quasi-polynomials in $n$ are two-level quasi-polynomials (Corollary~\ref{cor:constantdegree}), we obtain the following useful corollary.

\begin{corollary} \label{cor:powers}
      If $f$ is a quasi-polynomial of degree $d$, period $p_0$, and constant leading coefficient $c$, and if $h$ is a numerical polynomial, then $\left\{f(n)^{h(q)} / c^{h(q)}\right\}_{h(q)\ge 0}$ is a two-level quasi-polynomial of degree function $g(q) = d \cdot h(q)$, infinite depth, and  constant period function $p(r)=p_0$. 
    \end{corollary}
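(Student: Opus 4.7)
The plan is to derive this corollary as an essentially immediate combination of two results proved earlier in the section: Corollary~\ref{cor:constantdegree}(b), which promotes a quasi-polynomial in $n$ to a two-level quasi-polynomial, followed by Proposition~\ref{prop:powers}, which handles powers of such objects by a numerical polynomial in $q$. No new calculation is required; the task is simply to verify that the hypotheses of Proposition~\ref{prop:powers} are met in this setting.

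First, I would apply Corollary~\ref{cor:constantdegree}(b) to regard $f(n)$ as the $q$-independent sequence $f_q(n) := f(n)$. This exhibits $\{f_q(n)\}_{q \in \NN}$ as a two-level quasi-polynomial with constant degree function $g(q) = d$, infinite depth $e(q) = \infty$, and constant period function $p(r) = p_0$. Because $g$ does not depend on $q$ and $f_q$ does not depend on $q$, the leading coefficient $c_0(n,q)$ of this two-level quasi-polynomial coincides with the leading coefficient of $f$ as a quasi-polynomial in $n$, namely the constant $c$, independent of both $n$ and $q$.

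Second, I would invoke Proposition~\ref{prop:powers} with this two-level quasi-polynomial and the numerical polynomial $h(q)$. The hypothesis of a constant leading coefficient was arranged in the previous step, and the restriction $g(q), h(q) \geq 0$ reduces here to $h(q) \geq 0$ (since $d$ is fixed and nonnegative), matching the indexing set in the statement. The proposition then yields that $\{f_q(n)^{h(q)}/c^{h(q)}\}$ is a monic two-level quasi-polynomial of degree $gh = dh$, depth $\infty$, and period $p_0$, which is precisely the conclusion of the corollary.

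The only thing worth pausing over — and thus the closest thing to an obstacle — is the bookkeeping around the word \emph{constant} in \textquotedblleft constant leading coefficient\textquotedblright: Proposition~\ref{prop:powers} requires $c_0(n,q)$ to be a constant in both $n$ and $q$, while the corollary's hypothesis only directly asserts constancy of the leading coefficient of $f$ as a quasi-polynomial in $n$. This gap is closed automatically by the fact that in the promoted sequence $f_q(n) = f(n)$ there is no $q$-dependence at all, so the two notions coincide.
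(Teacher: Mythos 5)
Your proposal is correct and matches the paper's own derivation, which likewise obtains the corollary by viewing $f(n)$ as a constant-degree, infinite-depth two-level quasi-polynomial via Corollary~\ref{cor:constantdegree}(b) and then applying Proposition~\ref{prop:powers}. Your extra remark about the leading coefficient being constant in both $n$ and $q$ is a reasonable clarification but introduces nothing beyond what the paper's one-line justification already implies.
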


Next, we consider sums of two-level polynomials. Here there are two potential problems. One is that the degree and depth functions of the sum of two-level polynomials are unpredictable because of the possibility that the leading terms cancel. The other is exemplified by the case of the sum of $f_q(n) = n^q$ and $f'_q(n)=1$. Both have infinite depth, and they are of different degrees so their leading terms do not cancel, but their sum $n^q+1$ has depth function only $q-1$. Note that this isn't a problem if their degree functions differ by a constant: for example, $n^{q+3}+n^q$ still has infinite depth.

There are many precise statements we could make about addition (all with easy proofs). We will state a useful one about \emph{infinite} sums of two-level polynomials. As well as avoiding cancellation of leading terms and non-constant degree shifts, we want these sums to make sense formally; that is, we should never need to add infinitely many terms of the same degree. By choosing the sequence of degree functions appropriately, we deal with all of these issues.

\begin{proposition} \label{prop:sequence} Let $g(q)$ be a nonnegative numerical polynomial and $0 = d_0 < d_1 \leq d_2 \leq \cdots$ be a sequence of natural numbers such that for each $d \in \NN$, the set $I_d = \{t: \, d_t = d\}$ is finite. Let $f^{(0)}_q(n), f^{(1)}_q(n), f^{(2)}_q(n), \dots$ be a sequence of two-level quasi-polynomials such that $f^{(t)}_q(n)$ has degree function $g(q) - d_t$, depth function $e^{(t)}(q)$, and period function $p^{(t)}(r)$.  Then
  $\sum_{t=0}^{\infty} f^{(t)}_q(n)$ is a two-level quasi-polynomial of degree function $g$, depth function $e$, and period function $p$ where
  \[ e(q) = \min_{t\in\NN} \left(e^{(t)}(q) + d_t\right)\quad \text{and} \quad p(r) = \lcm_{t: \, d_t \leq r} \left(p^{(t)}(r-d_t)\right).\]
 In particular, if $f^{(t)}_q(n)$ has infinite depth for every $t$, then so does the sum. 
\end{proposition}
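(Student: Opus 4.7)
I would proceed codegree-by-codegree in $n$. First, the formal sum $\sum_t f^{(t)}_q(n)$ makes sense as a quasi-polynomial in $n$: the summand $f^{(t)}_q(n)$ has degree $g(q) - d_t$, so it contributes to the codegree $r$ coefficient of the total sum only when $d_t \leq r$. Since by hypothesis each $I_d$ is finite, the set $\{t : d_t \leq r\} = I_0 \cup \dots \cup I_r$ is finite, and each coefficient of the sum is a finite sum of codegree $(r - d_t)$ coefficients of the $f^{(t)}_q(n)$.

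Next I would pin down the degree and verify the period condition. Because $d_0 = 0 < d_1$, the term $n^{g(q)}$ appears only in $f^{(0)}_q(n)$, whose leading coefficient is nonzero by hypothesis, so the sum has degree exactly $g(q)$. For the monotonicity $p(r) \mid p(r+1)$, I would combine the inclusion $\{t : d_t \leq r\} \subseteq \{t : d_t \leq r+1\}$ with the individual monotonicity $p^{(t)}(r - d_t) \mid p^{(t)}(r + 1 - d_t)$, both of which hold by hypothesis.

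The heart of the argument is the depth. Fix $q \in S$ and $r \leq e(q) := \min_t(e^{(t)}(q) + d_t)$. For each $t$ with $d_t \leq r$, we have $r - d_t \leq e^{(t)}(q)$, so by the two-level structure of $f^{(t)}$ its codegree $r - d_t$ coefficient equals $\phi^{(t)}_{r-d_t,\, n \bmod p^{(t)}(r-d_t)}(q)$. Since $p^{(t)}(r - d_t) \mid p(r)$, the residue $n \bmod p^{(t)}(r - d_t)$ is determined by $i := n \bmod p(r)$. Setting
\[
\phi_{r,i}(q) := \sum_{t : d_t \leq r} \phi^{(t)}_{r-d_t,\, i \bmod p^{(t)}(r-d_t)}(q),
\]
a finite sum of polynomials in $q$, we obtain $c_r(n,q) = \phi_{r,\, n \bmod p(r)}(q)$, as required.

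The work is essentially bookkeeping: aligning codegrees across summands and reducing residues modulo the appropriate periods. One subtle point worth noting is that the minimum in the definition of $e(q)$ is always attained, since the finiteness of each $I_d$ forces $d_t \to \infty$, and $e^{(t)}(q) \geq -1$ yields $e^{(t)}(q) + d_t \to \infty$; in particular, if every $e^{(t)}(q) = \infty$ then $e(q) = \infty$, giving the final claim about infinite depth sums.
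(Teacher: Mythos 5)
Your proof is correct and follows essentially the same route as the paper's: regroup the sum by codegree using the finiteness of the sets $I_d$, use $d_0 < d_1$ to pin down the degree, and define $\phi_{r,i}(q)$ as the finite sum of the shifted coefficient polynomials $\phi^{(t)}_{r-d_t,\,i \bmod p^{(t)}(r-d_t)}(q)$. The additional checks you record (the divisibility $p(r) \mid p(r+1)$ and the attainment of the minimum defining $e(q)$) are sensible bookkeeping but do not constitute a different approach.
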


 \begin{proof}
   By the definition of two-level quasi-polynomials, for each $t$ there are  polynomials $\{\phi_{r}^{(t)}(q;i)\}_{r\ge 0,\ 0\le i< p^{(t)}(r)}$ such that if we write
 \[ f_q^{(t)}(n) = \sum_{s=0}^{g(q)-d_t} c_s^{(t)}(q;n)n^{g(q)-d_t-s},\]
 then $c_s^{(t)}(q;n) = \phi_{s}^{(t)}(q;i)$ whenever $s \leq e^{(t)}(q)$ and $n \equiv i \; \bmod p^{(t)}(s)$. 
 
 Now for each $q$, there are only finitely many $t$ such that $g(q)-d_t \geq 0$. That is, $f_q^{(t)}(n)$ is defined to be zero (see Definitions~\ref{def:two-level}(b) and \ref{def:two-levelQP}(b)) for all sufficiently large $t$, and thus we can define a quasi-polynomial with an infinite sum $f_q(n) := \sum_{t=0}^\infty f_q^{(t)}(n)$. The hypothesis $d_0 < d_1$ implies that the leading term of $f_q^{(0)}(n)$ cannot be cancelled, so the degree of $f_q$ is exactly $g(q)$. Furthermore,
      \begin{align*}
        f_q(n) & = \sum_{t=0}^\infty f_q^{(t)}(n) \\
        & = \sum_{t=0}^\infty \sum_{s=0}^{g(q)-d_t} c_s^{(t)}(q;n)n^{g(q)-d_t-s} \\
        & = \sum_{d=0}^{g(q)} \sum_{t \in I_d} \sum_{s=0}^{g(q)-d} c_s^{(t)}(q;n)n^{g(q)-d-s} \\
        & = \sum_{r=0}^{g(q)} \sum_{d=0}^r \sum_{t \in I_d} c_{r-d}^{(t)}(q;n)n^{g(q)-r}
      \end{align*}
      where we take $r=d+s$ for the last equality.

      Since $f^{(t)}$ has depth function $e^{(t)}$, we have $c_{r-d}^{(t)}(q;n) = \phi_{r-d}^{(t)}(q;i)$ whenever $n \equiv i \; \bmod p^{(t)}(r)$ and $r-d \leq e^{(t)}(q)$. The first condition implies that $n \equiv i \; \bmod p(r)$ and the second is equivalent to $r \leq e^{(t)}(q) + d = e^{(t)}(q)+d_t$. Thus for each $n \equiv i \; \bmod p(r)$ and $r \leq e^{(t)}(q) + d_t\le e(q)$, the coefficient of $n^{g(q)-r}$ is  $\sum_{d=0}^r \sum_{t \in I_d} \phi_{r-d}^{(t)}(q;i)$ which is a polynomial in $q$. That is, a period function of $f_q(n)$ is $p(r)$ and a depth function is $e(q)$. 
 \end{proof}

 Finally, we consider compositions of two-level polynomials. In general, this doesn't work well, or at least there aren't nice bounds on the depth. For example, $f_q(n)=n^{q+1}+n^{q}$ and $h_q(n)=n^q$ both have infinite depth, but
\[f_q\big(h_q(n)\big)=n^{q^2+q}+n^{q^2}\]
only has depth function $q$. However, we get a nice bound if $h_q(n)=h(n)$ is simply a polynomial in $n$.
 
\begin{proposition} \label{prop:evaluation}
If $f_q(n)$ is a two-level quasi-polynomial of degree function $g(q)$, depth function $e(q)$ and period function $p(r)$, and if $h$ is a polynomial of degree $d > 0$ and leading coefficient $c$, then $f_q\big(h(n)\big)/c^{g(q)}$ is a two-level quasi-polynomial of degree function $d 
 \cdot g(q)$, depth function $d \cdot e(q)+d-1$ and period function $p \big( \floor{ r/d } \big)$.  
\end{proposition}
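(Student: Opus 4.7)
The plan is to represent $f_q(h(n))/c^{g(q)}$ as a known two-level quasi-polynomial up through the advertised codegree, by constructing a ``phantom'' object $F_q(n)$ from building blocks supplied by the preceding algebra and then verifying it coincides with $f_q(h(n))/c^{g(q)}$ in all codegrees $\rho\le de(q)+d-1$. The obstruction to directly multiplying $c_r(h(n),q)$ by $h(n)^{g(q)-r}/c^{g(q)-r}$ is that $c_r(h(n),q)$ is only known to be polynomial in $q$ when $r\le e(q)$; we will replace it with the $\phi$-expression $\tilde c_r(n,q):=\phi_{r,\,h(n)\bmod p(r)}(q)$, which agrees with $c_r(h(n),q)$precisely in that range.

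First, for each fixed $r$, I would verify that
\[\tilde f^{(r)}_q(n):=c^{-r}\,\tilde c_r(n,q)\,\frac{h(n)^{g(q)-r}}{c^{g(q)-r}}\]
is a two-level quasi-polynomial of degree $d(g(q)-r)$, infinite depth, and constant period $p(r)$. Assuming $h\in\ZZ[n]$, the value $h(n)\bmod p(r)$ depends only on $n\bmod p(r)$, so Proposition~\ref{prop:bivariate} makes $\tilde c_r(n,q)$ a two-level quasi-polynomial of constant degree $0$, infinite depth, and constant period $p(r)$. Corollary~\ref{cor:powers} turns $h(n)^{g(q)-r}/c^{g(q)-r}$ into a two-level quasi-polynomial of degree $d(g-r)$, infinite depth, and period $1$. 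Multiplying by the constant $c^{-r}$ and invoking Proposition~\ref{prop:products} together with Corollary~\ref{cor:constantdegree} yields the claim. Then I would apply Proposition~\ref{prop:sequence} with degree shifts $d_r=dr$ (valid since $d>0$ gives $d_0<d_1$, and the sets $I_\delta=\{r:dr=\delta\}$ have at most one element) to conclude that $F_q(n):=\sum_{r=0}^\infty\tilde f^{(r)}_q(n)$ is a two-level quasi-polynomial of degree $dg(q)$, infinite depth, and codegree-$\rho$ period $\lcm_{r:\,dr\le\rho}p(r)=p(\lfloor\rho/d\rfloor)$, using that the $p(r)$ are nested.

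Finally, I would compare $F_q(n)$ with $f_q(h(n))/c^{g(q)}=\sum_{r=0}^\infty c^{-r}c_r(h(n),q)h(n)^{g(q)-r}/c^{g(q)-r}$. Their difference is
\[F_q(n)-\frac{f_q(h(n))}{c^{g(q)}}=\sum_{r>e(q)}c^{-r}\bigl(\tilde c_r(n,q)-c_r(h(n),q)\bigr)\frac{h(n)^{g(q)-r}}{c^{g(q)-r}},\]
since all summands with $r\le e(q)$ vanish by definition of depth. The $r$-th summand has polynomial degree in $n$ at most $d(g(q)-r)$, hence codegree at least $dr\ge d(e(q)+1)=de(q)+d$. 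Thus $F_q(n)$ and $f_q(h(n))/c^{g(q)}$ agree through codegree $de(q)+d-1$, so $f_q(h(n))/c^{g(q)}$ inherits the degree $dg(q)$, depth $de(q)+d-1$, and codegree-$\rho$ period $p(\lfloor\rho/d\rfloor)$ of $F_q$. The main obstacle is this last step's bookkeeping, namely, arguing that the phantom pieces $\tilde f^{(r)}_q$ can stand in for the true ones in every codegree the proposition promises, and verifying that the division by $c^{g(q)-r}$ and $c^r$ and the period-tracking through the nested $p$ survive the infinite-sum construction.
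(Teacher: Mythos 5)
Your argument is correct and follows the same skeleton as the paper's proof: both expand $f_q\big(h(n)\big)/c^{g(q)}$ as $\sum_{r}(\text{coefficient})\cdot h(n)^{g(q)-r}/c^{g(q)-r}$, treat each coefficient as a degree-zero two-level quasi-polynomial of period $p(r)$, and then combine Proposition~\ref{prop:powers}, Proposition~\ref{prop:products}, and Proposition~\ref{prop:sequence} with degree shifts $d_r=dr$. The one place you diverge is the depth bookkeeping. The paper keeps the true coefficients, observes that each has depth $\infty$ for $r\le e(q)$ and $-1$ for $r>e(q)$, and reads the final depth $\min_r\big(e_r(q)+dr\big)=d\cdot e(q)+d-1$ directly off the $\min_t\big(e^{(t)}(q)+d_t\big)$ formula in Proposition~\ref{prop:sequence}; you instead substitute the infinite-depth surrogates $\phi_{r,\,h(n)\bmod p(r)}(q)$, obtain an infinite-depth sum $F_q$, and recover the depth by bounding the codegree (at least $d(e(q)+1)$) of the discrepancy $F_q-f_q\big(h(n)\big)/c^{g(q)}$. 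Both routes yield the same bound and the same period $p\left(\floor{\rho/d}\right)$; yours is a bit longer but makes explicit why the low-codegree coefficients are unaffected, and it flags the hypothesis $h\in\ZZ[n]$ (so that $h(n)\bmod p(r)$ depends only on $n\bmod p(r)$), a point the paper uses only implicitly --- indeed the paper's displayed sum writes $c_r(n,q)$ where $c_r\big(h(n),q\big)$ is meant, eliding exactly the issue you address. (Minor quibble: the periodicity-in-$n$, polynomiality-in-$q$ of $\phi_{r,\,h(n)\bmod p(r)}(q)$ is immediate from Definition~\ref{def:two-levelQP} itself, which is a cleaner citation than Proposition~\ref{prop:bivariate}.)
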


\begin{proof}
  Write $f_q(n) = \sum_{r=0}^\infty c_r(q;n) n^{g(q)-r}$ and let $\{\phi_r(q;i)\}$ be the coefficient polynomials. Note that $c_r(q;n)$ is a two-level quasi-polynomial of degree 0, period the constant $p(r)$, and depth function
 \[e_r(q):=\begin{cases}
 \infty,&\text{if $r\le e(q)$,}\\
-1,&\text{if $r>e(q)$,}
 \end{cases}\]
 since in the first case it agrees with the polynomial $\phi_{r}\big(q;\,n\bmod p(r)\big)$.

We have
 \[\frac{f_q\big(h(n)\big)}{c^{g(q)}}  = \sum_{r=0}^\infty c_r(q;n) c^{-r} \frac{h(n)^{g(q)-r}}{c^{g(q)-r}}, \]  
 where, by Proposition~\ref{prop:powers}, $h(n)^{g(q)-r}/c^{g(q)-r}$ is a two-level quasi-polynomial of degree $dg(q)-dr$, period one, and infinite depth. Then, by Proposition~\ref{prop:products}, the index $r$ term in the sum is a two-level quasi-polynomial of degree function $dg(q)-dr$, period function $p(r)$ and depth function $e_r(q)$. Finally, Proposition~\ref{prop:sequence} gives us that $f_q\big(h(n)\big)/c^{g(q)}$ is a two-level quasi-polynomial of degree function $d \cdot g(q)$, depth function
\[\min_r\big(e_r(q)+dr\big)=\min_{r:\, r> e(q)}\left(-1+dr\right) = -1+d\big(e(q)+1\big) = d\cdot e(q)+d-1,\]
and period function
\[\lcm_{s:\,ds\le r} \big(p(s)\big)=p \big( \floor{r/d} \big),\]
as desired.
 \end{proof}

\section{Some examples and a schema}
\label{sect:schema}
We work through a few examples that illustrate a schema  (Theorem~\ref{thm:schema}) for proving that a counting function is a two-level polynomial. This schema applies to a large variety of problems.

\subsection{Two examples}
Let's start with a simple example from the binomial theorem.

\begin{example}
We will show that $f_q(n)=(n+1)^q$ is a two-level polynomial of degree function $q$ and infinite depth (which is already guaranteed by Corollary~\ref{cor:powers}, but we will recover the usual explicit sum).
\begin{itemize}
\item Write our function as a \emph{finite sum of known two-level polynomials}, over some sort of combinatorial \emph{objects}:
\[f_q(n) = \sum_{A\subseteq [q]}n^{q-\abs{A}}\cdot 1^{\abs{A}},\]
where the object $A$ is the subset of the binomials from which a 1 rather than an $n$ is chosen. For fixed $A$, $\left\{n^{q-\abs{A}}\right\}_{q\ge \abs{A}}$ is certainly a two-level polynomial. This example is a positive sum, but later we will employ inclusion-exclusion.
\item \emph{Partition the terms into types}, such that \emph{all terms of the same type are identical}. In this case, all $A$ of the same cardinality are defined to be of the same type.
\item Show that the \emph{number of terms of each type is a polynomial in $q$}: there are
\[\binom{q}{a}=\frac{1}{a!}(q)_a=\frac{1}{a!}q(q-1)\cdots(q-a+1)\]
subsets of cardinality $a$; this polynomial is valid for all $q\ge 0$, evaluating to zero when $q < a$, which will enable us to get a two-level polynomial of infinite depth. In Section~\ref{sect:finite}, we'll see examples that are invalid for small $q$, leading to finite depth.
\item As long as a finite number of types yield terms of a fixed codegree (in this case, one type per codegree), Proposition~\ref{prop:sequence} guarantees that our sum is a two-level polynomial. In this case, we reproduce the binomial theorem,
\[(n+1)^q = \sum_{a=0}^{\infty}\binom{q}{a}n^{q-a}.\]
\end{itemize}
\end{example}
Let's look at an intermediate-level example, which will allow us to refine the step of partitioning the terms of the sum into types.

\begin{example}
\label{ex:rising_fact}
We will compute the rising factorial, $f_q(n)=n(n+1)\cdots(n+q-1)$, as a two-level polynomial of infinite depth (and the falling factorial as well, since its terms are the same up to a factor of $\pm 1$). Our numbering below mirrors the precise statement of the schema in the next subsection, and we make this example slightly more complicated than it needs to be in order to be able to generalize more cleanly.

\begin{enumerate}
\item[(S1)] 
We start by writing $f_q(n)$ as a finite sum, using the well-known expansion in terms of unsigned Stirling numbers of the first kind:
\[f_q(n)=\sum_{\sigma\in S_q}n^{N(\sigma)},\]
where $\sigma$ is a permutation of $[q]$ and $N(\sigma)$ is its number of cycles. We break the process of dividing the \emph{objects} $\sigma$ into types, as follows.

\item[(S2)] Identify the \emph{crux} of each $\sigma$; this is the key piece of the object that, together with $q$,  completely determines its contribution to the sum. In our case, the crux is the set of cycles of $\sigma$ of length at least two. Note that there is a natural set of \emph{isomorphisms} of cruxes based on their cycle types, and that the cycle type and $q$ indeed determine the contribution $n^{N(\sigma)}$.
  
\item[(S3)]  Note that the identity permutation uniquely gives the highest degree term in this sum, $n^q$.

If a crux contains $y$ elements of $[q]$ in $z$ cycles, then $\sigma$ has  $q-y$ fixed points, and the \emph{codegree} of its term is
\[r(\sigma):=q-N(\sigma) = q - \big(z+(q-y)\big) = y-z.\]
In particular, $r(\sigma)$ depends on the isomorphism class of the crux but is independent of $q$. We will need to count the number of cruxes in an isomorphism class as a function of $q$. 

\item[(S4)] There is a natural notion of size of a crux: $y$, the number of elements of $[q]$ that it contains. Since each cycle in a crux has at least two elements,
\[r(\sigma) = y - z \ge y - \frac{y}{2} = \frac{y}{2},\]
and so the size of a crux is bounded by $2r(\sigma)$. This illustrates the remaining key property of the crux: fixed codegree terms come from fixed sized cruxes, independent of $q$. 

\item[(S5)] We could now count the number of cruxes in each isomorphism class directly, but in other examples it will be helpful to further refine these classes into crux \emph{types}, which will be easier to count.

To do this, map the crux to a \emph{foundation}. In this case, the foundation of $\sigma$ will be the subset of elements of $[q]$ in its crux. The size of the foundation (in this case) is exactly the same as the size of the crux. Note that the foundation loses important information about the crux and its contribution to the sum. In particular, in this case, it loses information about the cycle type of $\sigma$, and so different cruxes mapping to the same foundation may have different contributions to the sum. 

\item[(S6)] So why create foundations? Because we have simpler notions of foundation isomorphisms, and it will generally be easier to count the number of foundations in a given foundation isomorphism class (versus cruxes in a given crux isomorphism class). In our case, the foundation is just a subset of $[q]$, and there are obvious options for isomorphisms: define two subsets to be isomorphic if there is an order-preserving bijection between them. In our case, there is exactly one isomorphism class of each foundation size, and in general, we need to make sure that there are finitely many.

\item[(S7)] Isomorphisms of foundations immediately induce isomorphisms of cruxes, but note that not all crux isomorphisms are induced in this way. For example, $(123)$ and $(496)$ are isomorphic cruxes, but there is no foundation isomorphism $\{1,2,3\}\rightarrow\{4,6,9\}$ inducing this, because we defined foundation isomorphisms to be order-preserving. We define crux \emph{types} to be the equivalence classes of cruxes induced by foundation isomorphisms. That is, $(132)$ and $(496)$ are of the same type, but $(123)$ is a different type. In particular, the crux types refine the crux isomorphism classes.

\item[(S8)] In general, we want the number of crux types corresponding to an isomorphism class of foundations to be finite, and it clearly is here. In particular, if a foundation has $a$ elements, then each crux type can be represented by a derangement of $[a]$, and there are only finitely many of these.

\item[(S9)] Finally, we now count the number of cruxes of a given type. Given the foundation $[a]$, there are $\binom{q}{a}$ foundation isomorphisms that map $[a]$ into $[q]$. If $\tau$ is a derangement of $[a]$, then applying each foundation isomorphism to $\tau$ induces a unique isomorphic crux with foundation in $[q]$, and therefore there are $\binom{q}{a}$ cruxes of this type. Summing gives us a two-level polynomial of infinite depth:
\[f_q(n)= \sum_{a=0}^{\infty}\sum_{\tau \text{ deranges }[a]}\binom{q}{a}n^{q-a+N(\tau)}.\]
Note that $N(\tau)\le a/2$, so there are a finite number of terms of each codegree.

\end{enumerate}
\end{example}

In Sections~\ref{sect:chromatic} and \ref{sect:Ehrhart}, the foundation will always be something simple, such as a set $[a]$, yielding two-level polynomials of infinite depth. In Section~\ref{sect:finite}, we will see more complicated foundations, yielding finite depth.

\subsection{The Schema}
We now state the schema precisely, mirroring the numbering from the previous example. We will start Section~\ref{sect:chromatic} with intermediate-level examples further illustrating this schema.
\begin{theorem} \label{thm:schema}
Suppose $f_q(n)$ is such that
\begin{enumerate}
\item[(S1)] For $q\in \NN$, we can write
\[f_q(n)=\sum_{O\in\calO_q} p^{O}(n),\]
where $\calO_q$ is finite and $p^{O}(n)$ is a (quasi-)polynomial. We call the $O\in \calO_q$ {\bf objects}.

\item[(S2)] There is a map $\alpha$ from objects to {\bf cruxes}, bijective for each $q$, and there exist {\bf isomorphisms} of cruxes across $q$.  Given an isomorphism class $\calC$ of cruxes, there is a two-level (quasi-)polynomial $f_q^{\calC}(n)$ such that
\begin{itemize}
\item $f_q^{\calC}(n)$ is infinite depth for $q$ such that $\alpha(\calO_q)\cap\calC$ is nonempty, and
\item if $O\in\calO_q$ is such that $\alpha(O)\in {\calC}$, then
\[p^{O}(n) = f_q^{\calC}(n).\]
\end{itemize}

\item[(S3)] There is a numerical polynomial $g(q)$ such that for each isomorphism class $\calC$ of cruxes, the degree function of $f_q^{\calC}(n)$ is $g(q) - r(\calC)$ for some constant {\bf codegree }$r(\calC) \geq 0$. There is a unique class $\calC_0$ for which $r(\calC_0)=0$. 

\item[(S4)] There is a notion of (nonnegative integer) size of a crux $\calC$, respected by the isomorphisms, which is bounded by a function of the codegree $r(\calC)$.

\item[(S5)] There is a map (not necessarily injective) from cruxes to {\bf foundations}. There is a notion of (nonnegative integer) size of a foundation which is bounded by a function of the size of any associated crux.
\item[(S6)] There exist {\bf isomorphisms} of foundations that respect size, such that the number of isomorphism classes with a given foundation size is finite.

\item[(S7)] Isomorphisms of foundations {\bf induce isomorphisms} of cruxes: given a foundation isomorphism $\sigma: A_1\rightarrow A_2$ and a crux $C_1$ associated to $A_1$, we can induce a crux isomorphism $\overline{\sigma}: C_1\rightarrow C_2$ with $C_2$ associated to $A_2$.  Furthermore, $\overline{\sigma^{-1}}=\left(\overline{\sigma}\right)^{-1}$, and if $\sigma_2$ can be composed with $\sigma_1$, then $\overline{\sigma_2 \circ \sigma_1} = \overline{\sigma_2} \circ \overline{\sigma_1}$.  

  These induced crux isomorphisms are a subset of all crux isomorphisms, and so they create equivalence classes of cruxes that refine crux isomorphism classes. We call these new equivalence classes {\bf types}. 
  
\item[(S8)] The number of crux types corresponding to an isomorphism class of foundations is finite.

\item[(S9)] There is a depth function $e:\NN \to \NN \cup \{-1,\infty\}$ with the following property:

Given a foundation $A$, let $\psi_A(q)$ be the number of foundation isomorphisms with $A$ as its domain. Let $r$ be the minimum codegree for a crux associated to $A$. Then $\psi_A(q)$ agrees with a polynomial for all $q$ with $e(q)\ge r$. (In particular, if $e(q)=\infty$ for all $q\in\NN$, then $\psi_A(q)$ is a polynomial for all $q\in\NN$.)
\end{enumerate}
Then $f_q(n)$ is a two-level (quasi-)polynomial of degree function $g$ and depth function $e$.
\end{theorem}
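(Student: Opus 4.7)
The plan is to regroup the sum as one indexed by crux types, use orbit--stabilizer to count the cruxes in each type, and then read the codegree $r$ coefficient of $f_q(n)$ off as a polynomial in $q$ whenever $r\le e(q)$. Since $\alpha$ is bijective (S1--S2) and types refine isomorphism classes (S7), any two cruxes of a common type $T$ share the contribution $f_q^T := f_q^{\calC_T}$ from S2, so I would first rewrite
\[f_q(n) = \sum_T M_T(q)\cdot f_q^T(n), \qquad M_T(q) := \bigl|\,T\cap \alpha(\calO_q)\,\bigr|.\]
Chaining S4--S8 bounds crux size, then foundation size, then foundation isomorphism classes, and finally crux types per foundation class, showing that for each fixed codegree $r$ there are only finitely many types $T$ with $r_T = r$.

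To count $M_T(q)$, I would fix a representative crux $C$ of $T$ over a foundation $A$, and use the compositional properties of S7 to turn $\sigma\mapsto\overline{\sigma}(C)$ into an action of the foundation isomorphisms with domain $A$ on the cruxes of type $T$. Two isomorphisms $\sigma_1,\sigma_2$ produce the same crux exactly when $\sigma_2^{-1}\circ\sigma_1$ belongs to the subgroup
\[\Stab_T := \bigl\{\tau\colon A\to A \text{ a foundation automorphism with } \overline{\tau}(C) = C\bigr\},\]
whose order is finite and independent of $q$. Orbit--stabilizer then gives $M_T(q) = \psi_A(q)/|\Stab_T|$, and since $r_T$ is at least the minimum codegree among cruxes associated to $A$, S9 yields a polynomial $\Phi_T(q)$ with $M_T(q) = \Phi_T(q)$ whenever $e(q)\ge r_T$.

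Now fix $q$ with $e(q)\ge r$ and extract the codegree $r$ coefficient $c_r(q)$ of $f_q(n)$. Only finitely many types $T$ with $r_T\le r$ contribute. For each such $T$ we have $r_T \le e(q)$, so $M_T(q) = \Phi_T(q)$; when $M_T(q) > 0$ some crux of type $T$ lies in $\alpha(\calO_q)$, so S2 makes $f_q^T$ infinite depth at $q$ and its codegree $r-r_T$ coefficient is a polynomial value $\chi^T_{r-r_T}(q)$, while if $M_T(q) = 0$ then $\Phi_T(q) = 0$ and the contribution is still $\Phi_T(q)\chi^T_{r-r_T}(q)$. Summing produces
\[c_r(q) = \sum_{T:\,r_T\le r}\Phi_T(q)\,\chi^T_{r-r_T}(q),\]
a polynomial in $q$, establishing degree $g$ and depth $e$. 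In the quasi-polynomial case, periods propagate from the $f_q^T$ via Proposition~\ref{prop:products} applied termwise, while the depth argument is unchanged.

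I expect the main obstacle to be the orbit--stabilizer setup: turning the abstract induced crux isomorphisms of S7 into a bona fide group action with a well-defined finite stabilizer, and confirming that $\psi_A(q)$ really parametrizes the cruxes of type $T$ lying in $\alpha(\calO_q)$ rather than a strictly larger set of abstract images. Once the counting step is pinned down, the polynomial identification via S9, the infinite-depth hypothesis of S2, and the finiteness established in the first paragraph combine mechanically to give the result.
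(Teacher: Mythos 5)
Your proposal is correct and follows essentially the same route as the paper: the paper also regroups the sum by crux types within each codegree, proves your orbit--stabilizer count as a standalone lemma (an explicit bijection showing the number of cruxes of type $t$ is $\psi_A(q)/c_t$, where $c_t$ is the order of your $\Stab_T$), and handles the $M_T(q)=0$ case via the infinite-depth clause of (S2) exactly as you do, before assembling the coefficients with its infinite-sum proposition. The obstacle you flag at the end is precisely what the paper's Lemma on automorphisms resolves, using the composition and inverse axioms of (S7).
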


First we use (S7) to prove a lemma that ties (S9) to what we want to count: cruxes of a given type.

\begin{lemma}
\label{lem:automorph}
Let $C$ be a crux of type $T$ and $A$ its foundation, let $\omega_T(q)$ be the number of cruxes of type $T$, and let $c_T$ be the number of automorphisms of $A$ that induce an automorphism of $C$. Then $\omega_T(q) = \psi_A(q)/c_T$.
\end{lemma}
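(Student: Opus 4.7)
The plan is a standard orbit-counting argument using the induced-isomorphism structure from (S7). I would define the map $\Phi$ from the set of foundation isomorphisms with domain $A$ to the set of cruxes of type $t$ by $\Phi(\sigma) = \overline\sigma(C)$. Well-definedness follows from (S7): $\overline\sigma$ is a crux isomorphism induced by a foundation isomorphism, so $\overline\sigma(C)$ lies in the same type as $C$. Surjectivity follows from the very definition of type: any crux $C''$ of type $t$ is related to $C$ by some induced crux isomorphism $\overline\sigma$, and the underlying foundation isomorphism $\sigma$ necessarily has domain $A$, the foundation of $C$.

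The core of the proof is to show that every fiber of $\Phi$ has size exactly $c_t$. Suppose $\sigma_1, \sigma_2$ both have domain $A$ and $\overline{\sigma_1}(C) = \overline{\sigma_2}(C)$. The two induced cruxes share a foundation, so $\sigma_1$ and $\sigma_2$ share the same codomain, and $\alpha := \sigma_2^{-1}\circ \sigma_1$ is a foundation automorphism of $A$. By the composition and inverse clauses of (S7), $\overline\alpha = \overline{\sigma_2}^{-1}\circ\overline{\sigma_1}$, so $\overline\alpha(C) = C$; that is, $\alpha$ induces an automorphism of $C$. Conversely, for any such $\alpha$ and any $\sigma$ in the fiber, the composition $\sigma\circ\alpha$ has the same codomain as $\sigma$ and satisfies $\overline{\sigma\circ\alpha}(C) = \overline\sigma(\overline\alpha(C)) = \overline\sigma(C)$, so it lies in the same fiber. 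This exhibits the fiber as a torsor under the set of foundation automorphisms of $A$ that induce an automorphism of $C$, which by hypothesis has cardinality $c_t$.

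Partitioning the $\psi_A(q)$ foundation isomorphisms with domain $A$ into the fibers of $\Phi$, each of size $c_t$, over the $\omega_t(q)$ cruxes of type $t$ yields $\psi_A(q) = c_t \cdot \omega_t(q)$, which gives the claimed identity. The only subtle point is verifying that the set of foundation automorphisms of $A$ inducing an automorphism of $C$ genuinely forms a group acting freely on each fiber, but both properties are immediate from the composition and inverse compatibility of the map $\sigma\mapsto\overline\sigma$ that is built into (S7). No additional work is required.
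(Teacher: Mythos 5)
Your argument is correct and is essentially the paper's own proof in different clothing: the paper fixes a crux $C'$ of type $t$, fixes one element $\sigma$ of the fiber over $C'$, and exhibits the explicit mutually inverse maps $\tau \mapsto \tau^{-1}\circ\sigma$ and $\rho \mapsto \sigma\circ\rho^{-1}$ between that fiber and the set of foundation automorphisms of $A$ inducing automorphisms of $C$, which is precisely your statement that each fiber of $\Phi$ is a torsor under that set. The well-definedness, surjectivity, and counting steps match the paper's, so nothing further is needed.
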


\begin{proof}
Let $X$ be the set of automorphisms of $A$ that induce an automorphism of $C$. Let $C'$ be another crux of type $T$ and $A'$ be its foundation. Let $Y$ be the set of foundation isomorphisms from $A$ to $A'$ that induce an isomorphism from $C$ to $C'$. We will show that $X$ and $Y$ are in bijection. The claim follows immediately, because of the $\psi_A(q)$ isomorphisms with domain $A$, exactly $c_T=\abs{X}$ of them will induce maps from $C$ to any $C'$ of type $T$.

  Indeed, fix $\sigma \in Y$. For each $\tau \in Y$, the composition $\tau^{-1} \circ \sigma$ is a foundation automorphism of $A$, and it follows from (S7) that
  $\overline{\tau^{-1} \circ \sigma} = \left( \overline{\tau} \right)^{-1} \circ \overline{\sigma}$
  is a foundation-induced crux automorphism of $C$. Similarly, for each $\rho \in X$, the composition $\sigma \circ \rho^{-1}$ is a foundation isomorphism from $A$ to $A'$, and it follows from (S7) that $\overline{\sigma \circ \rho^{-1}} = \overline{\sigma} \circ \left( \overline{\rho} \right)^{-1}$ is a foundation-induced crux isomorphism from $C$ to $C'$.

  Define functions $f: Y \to X$ and $g: X \to Y$ by
  \[ f(\tau) = \tau^{-1} \circ \sigma, \quad g(\rho) = \sigma \circ \rho^{-1}. \]
  For $\rho \in X$ we have
  \[ (f \circ g)(\rho) = f \left(\sigma \circ \rho^{-1} \right) = \left( \sigma \circ \rho^{-1} \right)^{-1} \circ \sigma = \rho \circ \sigma^{-1} \circ \sigma = \rho \]
  and for $\tau \in Y$ we have
  \[ (g \circ f)(\tau) = g \left(\tau^{-1} \circ \sigma \right) = \sigma \circ \left( \tau^{-1} \circ \sigma \right)^{-1} = \sigma \circ \sigma^{-1} \circ \tau = \tau. \]
  We conclude that $f$ and $g$ are mutual inverses, and the claim follows.
\end{proof}

\begin{proof}[Proof of Theorem~\ref{thm:schema}]
  By (S1) and (S2), we can write
  \[ f_q(n) = \sum_{O \in \calO_q} p^{O}(n) = \sum_{\calC} \sum_{\substack{O \in \calO_q: \\ \alpha(O) \in \calC}} f_q^\calC(n).\]
  By (S3) we can refine the sum according to the codegree $r$ to obtain 
  \begin{equation} f_q(n) = \sum_{r=0}^\infty \sum_{\calC: \, r(\calC) = r} \sum_{\substack{O \in \calO_q: \\ \alpha(O) \in \calC}} f_q^\calC(n). \label{eq:fromS3} \end{equation}
  
  Note that (S4) and (S5) together imply that there is an upper bound on the foundation size that is a function of the codegree. So given $r$, (S6) implies that there is a finite set $S_r$ of isomorphism classes of foundations that might contribute in codegree $r$. By (S7), we can partition each class $\calC$ of cruxes into types according to the isomorphism class of the foundation. For each $\calA \in S_r$, let $\calT_{\calA,r}$ be the set of types of cruxes of foundation class $\calA$ and codegree $r$. By (S8), $\calT_{\calA,r}$ is finite.  Using the bijection of objects and cruxes (S2) for each $q$, (\ref{eq:fromS3}) can be written as 
\[ f_q(n) = \sum_{r=0}^\infty \sum_{\calA \in S_r} \sum_{T \in \calT_{\calA,r}} \omega_T(q) f_q^{T}(n).\]
 
By Lemma~\ref{lem:automorph}, we now have
\begin{equation} f_q(n) = \sum_{r=0}^\infty \sum_{\calA \in S_r} \sum_{T \in \calT_{\calA,r}} \frac{1}{c_T}\psi_\calA(q)f_q^{T}(n). \label{eq:fromlemma} \end{equation}

Let $r\ge 0$, $\calA\in S_r$, and $T\in \calT_{\calA,r}$ be given. By (S2), $f_q^{T}(n)$ has infinite depth for all $q$ such that $\alpha(\calO_q)\cap T$ is nonempty, that is, for all $q$ such that $\psi_\calA(q)>0$. By (S9), $\psi_\calA(q)$ agrees with a polynomial for all $q$ with $e(q)\ge r$, and so by  Lemma~\ref{lem:poly_prod}, $\psi_\calA(q)f_q^{T}(n)$ also has infinite depth for all $q$ with $e(q)\ge r$ such that $\psi_\calA(q)=0$. In other words, $\frac{1}{c_T}\psi_\calA(q) f_q^{T}(n)$ is a two-level polynomial of degree function $d(q)-r$ and depth function
\[e_r(q):=\begin{cases} -1, & \text{if $e(q)\le r-1$,}\\ \infty, & \text{if $e(q)\ge r$.}\end{cases}\] To apply Proposition \ref{prop:sequence}, note that, since $S_r$ and $\calT_{\calA,r}$ are always finite, we have a finite sum of two-level polynomials of degree function $g(q)-r$ for each $r \geq 0$. Furthermore, by (S3) the sum in (\ref{eq:fromlemma}) begins with a unique two-level polynomial of degree function $g(q)$:
\[\Big|\{O\in\calO_q:\ \alpha(O)\in \calC_0\}\Big|\cdot f_q^{\calC_0}(n)=\left(\sum_{\calA \in S_0} \sum_{T \in \calT_{\calA,0}} \frac{1}{c_T}\psi_\calA(q)\right)f_q^{\calC_0}(n).\]

We conclude by Proposition \ref{prop:sequence} that $f_q(n)$ is a two-level polynomial of degree function $g(q)$ and depth function
\[\min_r\big(e_r(q)+r\big)=\min_{r:\ e(q)\le r-1}\left(-1+r\right)= -1+\big(e(q)+1\big)=e(q).\]
\end{proof}

\section{Graph polynomials} \label{sect:chromatic}
Given a graph $G$, the chromatic polynomial $\chi_G(n)$ is the number of ways to color the vertices of $G$ with $n$ colors such that two adjacent vertices receive different colors. This function is well known to be a polynomial and its degree is the number of vertices of $G$. For several interesting infinite classes of graphs $\{G_q\}_{q\in\NN}$ with high symmetry, we will see that $\chi_{G_q}(n)$ is a two-level polynomial of infinite depth. With similar methods, we show that the characteristic polynomials and matching polynomials of the same classes of graphs are also two-level polynomials of infinite depth. 

\subsection{Two intermediate examples}
Before tackling some more complex families of graphs, we examine two families for which we recover complete formulas. The first one will reinforce our schema (Theorem~\ref{thm:schema}), and the second will introduce inclusion-exclusion as an important tool in later examples.

\begin{example}
Let $G_{q}=K_{q,q}$, the complete bipartite graph with two parts $A_q$ and $B_q$ of size $q$, and let $f_q(n)=\chi_{G_q}(n)$. We will show that $f$ is a two-level polynomial of degree function $2q$ and infinite depth. We use the numbering from the schema.
\begin{enumerate}
\item[(S1)] A proper coloring induces a partition $\Pi$ of $A_q$, where each part is the set of vertices of a given color. If $\Pi$ has $i$ parts, then the number of colorings of $A_q$ that induce $\Pi$ is $(n)_i=n(n-1)\cdots(n-i+1)$. The vertices of $B_q$ can each be assigned any of the remaining $(n-i)$ colors. Therefore the total number of proper colorings of $G_q$ that induce the partition $\Pi$ on $A_q$ is $(n)_i(n-i)^q$, and 
\[\chi_{G_q}(n) = \sum_{i}\sum_{\Pi:\ \abs{\Pi}=i}(n)_i(n-i)^q.\]
Our objects will be the partitions $\Pi$ of $A_q$.

\item[(S2)] The crux corresponding to $\Pi$ will be the collection of all parts that have size at least two. The crux isomorphisms are the obvious ones so that isomorphism classes will be indexed by multisets of part sizes (each size at least 2). If a crux contains $y$ elements of $[q]$ in $z$ parts, then $\Pi$ has $q-y$ singleton parts, and $\abs{\Pi} = z+q-y$.
Define $r(\Pi)=y-z$, so that $r(\Pi)=q-\abs{\Pi}$. The the crux, together with $q$, determines the contribution
\[(n)_{q-r(\Pi)}\big( n-q+r(\Pi) \big)^q \]
to the sum. This is a two-level polynomial of infinite depth by Example~\ref{ex:rising_fact}, Proposition~\ref{prop:products}, and Proposition~\ref{prop:powers}.

\item[(S3)] The partition into singletons yields the unique term, $(n)_q(n-q)^q$, of maximal degree $2q$, and the codegree of a given term is therefore $r(\Pi)$.

\item[(S4)] The size of the crux is $y$, the number of elements of $[q]$ it contains. Since each part in the crux has size at least two,
\[r(\Pi) = y-z \ge y-\frac{y}{2}=\frac{y}{2},\]
and so the size of a crux is bounded by $2r(\Pi)$.

\item[(S5)] The foundation is simply the set of elements of $[q]$ that belong to the crux. Its size is simply its cardinality, which is also the size of any associated crux. As in Example~\ref{ex:rising_fact}, important information about the crux (the number of parts in the crux) is lost when passing to foundations, and so different cruxes corresponding to the same foundation may contribute different terms to the sum.

\item[(S6)] This time, we define the foundation isomorphisms to be any bijection (not necessarily order-preserving). There is exactly one foundation isomorphism class of a given size.

\item[(S7)] Isomorphisms of foundations induce isomorphisms of cruxes in the obvious way, and this time they induce \emph{all} crux isomorphisms. That is, the crux types are exactly the crux isomorphism classes, rather than a proper refinement. (We are doing Example~\ref{ex:rising_fact} and this one in slightly different ways in order to highlight some key counting principles.)

\item[(S8)] Any crux type mapping to a foundation of size $a$ can be realized by a partition of $[a]$ with all parts of size at least two. There are only finitely many such partitions. 

\item[(S9)] The number of isomorphisms with domain $[a]$ and codomain a foundation in $[q]$ is $(q)_a$. This suffices to apply the schema and conclude we have a two-level polynomial of infinite depth. Additionally, we can calculate the constant $c_T$ in Lemma~\ref{lem:automorph} to get an explicit formula, as follows.

 A crux type is indexed by a multiset $T=[t_1^{e_1},\ldots, t_k^{e_k}]$ where $t_j\ge 2$ are the sizes of the parts and $\sum_j e_jt_j$ is the size, $a$. The number of crux automorphisms (induced by foundation automorphisms) is
\[c_T=\prod_j (t_j!)^{e_j}e_j!.\]
\end{enumerate}
Putting it all together, let $\calT_a$ be the set of partitions of $[a]$ with each part of size at least two, and for $T \in \calT_a$, let $c_T$ be defined as above and $r(T)=a-\sum_j e_j$ the codegree of the term. Then
\[\chi_{G_q}(n) = \sum_{a}\sum_{T\in \calT_a} \frac{(q)_a}{c_T}\cdot (n)_{q-r(T)}\cdot\big(n-q+r(T)\big)^q.\]

Swenson \cite{Swenson} obtained a formula for the chromatic polynomial of $K_{q,q}$ using Stirling numbers of the second kind (the number $S(q,j)$ of partitions of $[q]$ into $j$ parts). We note that the key calculation hidden in our proof that gives us a two-level polynomial is that $S(q,q-r)$ is a polynomial in $q$, for fixed $r$ (see, for example, Howard \cite{Howard}).
\end{example}

The next example illustrates the use of inclusion-exclusion in the schema. It also shows how we can leverage the results of Section \ref{sect:alg} to create more and more complicated two-level polynomials.

\begin{example}
  Define the Cartesian product $G\times H$ of two graphs $G, H$ to be the graph with vertices $(u,v)$ such that $u$ is a vertex of $G$ and $v$ a vertex of $H$, and edges $(u,v)\!\sim\!(u,v')$ for each edge $v\!\sim\!v'$ of $H$ and each vertex $u$ of $G$, as well as $(u,v)\!\sim\!(u',v)$ for each edge $u\!\sim\!u'$ of $G$ and each vertex $v$ of $H$. Now let $G_q =  K_q\times P_q$, where $K_q$ is the complete graph on $q$ vertices and $P_q$ is the path on $q$ vertices. Let $f_q(n)=\chi_{G_q}(n)$ be the chromatic polynomial, which we will show is a two-level polynomial of infinite depth.

We think of $K_q\times P_q$ as $q$ copies of $K_q$, with vertices in the $i$-th copy connected to their corresponding vertices in the $(i+1)$-st, for $1\le i\le q-1$. The first copy of $K_q$ may be properly colored in $(n)_q$ ways, which we proved in Example~\ref{ex:rising_fact} to be a two-level polynomial of infinite depth. Given a proper coloring of $K_q$, let $g_q(n)$ be the number of ways to extend this to a proper coloring of $K_q\times P_2$ by coloring an adjacent $K_q$. If we can show that $g_q(n)$ is a two-level polynomial of infinite depth, then so is
\[f_q(n)=(n)_q\big(g_q(n)\big)^{q-1}\]
by Propositions \ref{prop:products} and \ref{prop:powers}.

Given $A\subseteq [q]$ with $\abs{A}=a$, the number of proper colorings of the second copy of $K_q$ where the vertices in $A$ get the \emph{same} color in the two copies is $(n-a)_{q-a}$: the vertices in $A$ have their color determined, and the remaining $q-a$ vertices can be colored with the remaining $n-a$ colors. We want colorings with no such conflicts, so inclusion-exclusion yields
\[g_q(n) = \sum_{A\subseteq [q]} (-1)^{\abs{A}} (n-\abs{A})_{q-\abs{A}}.\]
In this case, the \emph{types} are easy and we don't really need the schema: all subsets $A$ of the same cardinality can be of the same type, which gives us
\[g_q(n)=\sum_{a=0}^{\infty} (-1)^{a} \binom{q}{a} (n-a)_{q-a},\]
a two-level polynomial of infinite depth.
\end{example}

\subsection{Chromatic polynomials via M\"obius inversion}
For more complicated families of graphs, we will calculate each chromatic polynomial via the technique of M\"obius inversion on the bond lattice, as developed by Rota \cite[\S 9]{Rota}. Given a partition of the vertices of a graph $G = (V, E)$ such that the induced subgraph of each part is connected, define its \emph{bond} to be the disjoint union of these induced subgraphs. The bonds form a ranked lattice $\calL_G$ under the partial order given by coarsening the partitions. The rank of a bond is thus the number of vertices minus the number of connected components. 

Any coloring of the vertices of $G$ induces a bond whose edges are the edges of $G$ that are not properly colored (that is, both ends of the edge are assigned the same color.) The proper colorings are those for which the bond is the finest partition into singletons. Given a bond $B$ on $y$ vertices of rank $r$ in the lattice, the number of $n$-colorings whose bond equals or coarsens $B$ is $n^{|V|-r}$. It follows by M\"obius inversion that the chromatic polynomial is 
\begin{equation} \chi_G(n) = \sum_{B \in \calL(G)}\mu_{\calL(G)}([\hat{0},B])n^{|V|-\textup{rank}(B)} =: \sum_{B \in \calL(G)} p^{B}(n). \label{eq:Moebiusonbonds} \end{equation}
so we will apply the schema by letting objects be the bonds. We will define the crux $\calC$ of a bond $B$ to be the union of all of the components of size greater than one. We take the size of a crux to be the number of vertices and crux isomorphisms to be graph isomorphisms between cruxes. 

\begin{proposition} \label{prop:allbonds}
Let $G_q$ be any family of graphs such that the number of vertices of $G_q$ agrees with a numerical polynomial $g(q)$. Then the family of chromatic polynomials $\chi_{G_q}(n)$ satisfies hypotheses (S1), (S2), (S3), and (S4) in Theorem \ref{thm:schema}.
\end{proposition}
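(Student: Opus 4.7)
The plan is to take the bonds $b \in \calL(G_q)$ as the objects and the M\"obius-inversion formula (\ref{eq:Moebiusonbonds}) as the decomposition demanded by (S1), with $p^{(b)}(n) = \mu_{\calL(G_q)}(\hat 0, b)\, n^{g(q) - \textup{rank}(b)}$. Since each summand is a monomial in $n$ and the bond lattice of a finite graph is finite, (S1) is immediate, so the remaining work is to verify (S2)--(S4) by reading everything off the geometry of the crux.

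The crux $\alpha(b)$ of a bond $b$ is the disjoint union of its non-singleton components, regarded as a labeled collection of induced subgraphs of $G_q$; since the singleton parts of $b$ are exactly the vertices of $G_q$ outside $\alpha(b)$, the map $\alpha$ is a bijection from bonds to cruxes in $G_q$, as (S2) requires. I then need $p^{(b)}(n)$ to depend only on the isomorphism class $\calC$ of the crux. If the crux has $y$ vertices in $z$ components, counting singletons and components gives $\textup{rank}(b) = y - z$, an isomorphism invariant. The interval $[\hat 0, b]$ in $\calL(G_q)$ factors as a direct product of the bond lattices $\calL(C)$ of the components $C$ of $b$ (singleton components contribute trivial factors), so by the standard multiplicativity of M\"obius functions across direct products,
\[ \mu_{\calL(G_q)}(\hat 0, b) \;=\; \prod_{C} \mu_{\calL(C)}(\hat 0, \hat 1), \]
with $C$ ranging over the components of the crux. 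Each factor depends only on $C$ up to graph isomorphism, so the product depends only on $\calC$; call this common value $\mu_\calC$ and set $r_\calC := y - z$. Then $f_q^{\calC}(n) := \mu_\calC\, n^{g(q) - r_\calC}$ has its only nonzero coefficient (in $n$) equal to the constant $\mu_\calC$, hence is a two-level polynomial of infinite depth by Corollary~\ref{cor:constantdegree}, completing (S2). This multiplicativity step is the only real substance in the argument.

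For (S3), the polynomial $g$ is given by hypothesis, $\deg f_q^{\calC}(n) = g(q) - r_\calC$, and $r_\calC = y - z \geq 0$ with equality only when the crux is empty; the empty crux $\calC_0$ is the image of the finest bond $\hat 0$ and gives the unique leading monomial $n^{g(q)}$. For (S4), every component of a crux has at least two vertices so $y \geq 2z$, whence $r_\calC \geq y/2$ and the size $y$ of the crux is bounded by $2 r_\calC$, a function of the codegree. Once (S1)--(S4) are in place, applying the schema to any specific family $G_q$ in Section~\ref{sect:chromatic} reduces to supplying data for (S5)--(S9): a notion of foundation of a crux and a count of foundation isomorphisms that is polynomial in $q$.
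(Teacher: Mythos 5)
Your proposal is correct and follows essentially the same route as the paper: objects are bonds, (S1) is the M\"obius-inversion formula, and the fact that $\mu_{\calL(G_q)}(\hat 0,b)$ depends only on the crux's isomorphism class is obtained from the structure of the interval $[\hat 0,b]$ (the paper states the interval isomorphism $[\hat 0,b]\cong[\hat 0,\calC]$ directly, while you factor it over components and invoke multiplicativity of $\mu$ --- the same observation), followed by the identical rank and size computations for (S3) and (S4). One small correction: Corollary~\ref{cor:constantdegree} does not apply to $\mu_\calC\, n^{g(q)-r_\calC}$, since its degree is not constant in $q$; the paper cites Corollary~\ref{cor:powers} here, though your direct remark that the only nonzero coefficient is the constant $\mu_\calC$ already verifies infinite depth from Definition~\ref{def:two-level}.
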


\begin{proof}
  We already have (S1) by (\ref{eq:Moebiusonbonds}). An isomorphism class of cruxes is an unlabeled graph $\calC$, say on $y$ vertices and $z$ connected components, with no isolated vertices. If $B$ is a bond of $G_q$ whose crux is $\calC$, then it has $g(q)-y+z$ connected components and its rank is $y-z$, independent of $q$. Also, the intervals $[ \hat{0}, B]$ in $\cL_q := \calL_{G_q}$ and $[ \hat{0}, \calC]$ in $\cL_\calC$ are isomorphic, so their M\"obius function values are equal. It follows that
  \[ p^{B}(n) = \mu_{\calL_q}\big([ \hat{0}, B]\big)n^{g(q)-y+z} = \mu_{\calL_\calC}\big([ \hat{0}, \calC]\big)n^{g(q)-y+z} =: f_q^{\calC}(n), \]
  which is a two-level polynomial of degree function $n^{g(q)-y+z}$ and infinite depth, by Corollary \ref{cor:powers}. 
  This proves (S2). The codegree $r(\calC) = y-z$ is a nonnegative constant and is equal to zero if and only if $\calC$ is the graph $\calC_0$ with no vertices, proving (S3). Finally, since each connected component of $\calC$ has at least two vertices,
  \[ r(\calC) = y-z \geq y - \frac{y}{2} = \frac{y}{2} \]
  which yields the bound $y \leq 2r(\calC)$, proving (S4). 
\end{proof}

The remaining parts of the schema depend on how the foundation of a bond is defined, which will vary depending on the structure of the graphs.

\subsubsection{Kneser and Johnson graphs}\label{subsect:Kneser}

We next illustrate the method as applied to families of \emph{Kneser graphs} \cite{Kneser}. Fix $k \geq 0$. For each $q \geq 0$, the Kneser graph $\Kn(q,k)$ is the graph on the $k$-subsets of the symbol set $[q]$ whose edge relation is disjointness. The first nontrivial case is $\Kn(5,2)$ which is the Petersen graph. Kneser conjectured that the chromatic number of $\Kn(q,k)$ is $q-2k+2$ whenever $q \geq 2k+1$. This conjecture was first proved by Lov\'asz \cite{Lovasz} using topological methods. We do not know of any formula for the chromatic \emph{polynomial} of $\Kn(q,k)$.  

\begin{theorem} \label{thm:Kneser}
 Fix $k \geq 1$. Then $f_q(n) = \chi_{\Kn(q,k)}(n)$ is a two-level polynomial of degree function $\binom{q}{k}$ and infinite depth.
\end{theorem}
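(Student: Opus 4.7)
The plan is to apply the schema (Theorem~\ref{thm:schema}) to the chromatic polynomial $\chi_{\Kn(q,k)}(n)$ via the bond-lattice M\"obius formula (\ref{eq:Moebiusonbonds}), as set up in Proposition~\ref{prop:allbonds}. Since $|V(\Kn(q,k))| = \binom{q}{k}$ is a numerical polynomial in $q$, Proposition~\ref{prop:allbonds} already supplies hypotheses (S1)--(S4) with degree function $g(q)=\binom{q}{k}$. The remaining task is to define a foundation for cruxes arising from bonds of $\Kn(q,k)$, verify (S5)--(S8), and compute $\psi_A(q)$ for (S9) with the target $e(q)=\infty$.

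Given a crux $\calC$ (the disjoint union of the non-singleton connected components of a bond), define its \textbf{foundation} $F(\calC)\subseteq [q]$ to be the union of all $k$-subsets that occur as vertices in $\calC$. If $\calC$ contains $y$ vertices, then $|F(\calC)| \leq ky \leq 2kr_\calC$ by the bound $y\leq 2r_\calC$ established in the proof of Proposition~\ref{prop:allbonds}, proving (S5). Declare foundation isomorphisms to be arbitrary bijections between equinumerous subsets of $[q]$; there is then exactly one isomorphism class per size $a\in\NN$, proving (S6).

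For (S7), a bijection $\sigma\colon A_1\to A_2$ induces a map $\overline{\sigma}$ on cruxes supported on $A_1$ by the entrywise rule $S\mapsto \sigma(S)$ on each $k$-subset vertex. Since disjointness of $k$-subsets is preserved by applying any common bijection, $\overline{\sigma}$ carries edges of $\Kn(q,k)$ to edges and thus yields a bond isomorphism onto a crux $\calC_2$ supported on $A_2$; the identities $\overline{\sigma^{-1}} = \overline{\sigma}^{\,-1}$ and $\overline{\sigma_2\circ\sigma_1} = \overline{\sigma_2}\circ\overline{\sigma_1}$ are immediate from the functoriality of pointwise application. For (S8), a crux type with foundation $[a]$ is represented by a bond without isolated vertices on some induced subgraph of $\Kn(a,k)$; since $\Kn(a,k)$ is a finite graph, only finitely many such bonds exist. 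For (S9), the foundation isomorphisms with domain $[a]$ are precisely the injections $[a]\hookrightarrow[q]$, so $\psi_{[a]}(q) = (q)_a = q(q-1)\cdots(q-a+1)$ is a polynomial in $q$ valid for every $q\in\NN$ (vanishing when $q<a$). Taking $e(q)=\infty$, the schema delivers the desired two-level polynomial of degree $\binom{q}{k}$ and infinite depth.

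The only real subtlety lies in (S7): one must confirm that $S\mapsto\sigma(S)$ is a genuine \emph{graph} isomorphism of cruxes rather than a bare bijection of vertex sets. This is automatic because adjacency in $\Kn(q,k)$ is the purely set-theoretic condition of disjointness, which any common relabeling preserves; consequently the M\"obius-function values of the associated intervals match, as required by the definition of $f_q^\calC(n)$ in Proposition~\ref{prop:allbonds}. The rest of the verification is routine bookkeeping, and the argument produces the implicit formula $\chi_{\Kn(q,k)}(n) = \sum_{a\geq 0} \sum_{t\in T_a}\frac{(q)_a}{c_t}\, \mu(\hat{0},t)\, n^{\binom{q}{k}-r_t}$, where $T_a$ ranges over crux types on foundation $[a]$ and $c_t$ is the constant from Lemma~\ref{lem:automorph}.
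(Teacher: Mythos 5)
Your proposal is correct and follows essentially the same route as the paper's own proof: Proposition~\ref{prop:allbonds} for (S1)--(S4), the foundation defined as the set of symbols appearing in the crux with arbitrary bijections as foundation isomorphisms, the induced map $v \mapsto \{\sigma(i) : i \in v\}$ preserving disjointness for (S7), and $\psi_{[a]}(q) = (q)_a$ for (S9). The closing explicit formula is a harmless addition not present in the paper, but the argument is the same.
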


\begin{proof} Let $\cL_q$ be the bond lattice of the Kneser graph $\Kn(q,k)$ and define cruxes as above, so (S1), (S2), (S3), and (S4) of the schema follow from Proposition \ref{prop:allbonds}. We define the foundation $A$ of a bond $B \in \cL_q$ to be the set of symbols that belong to at least one vertex in the crux of $B$. We define the size of the foundation to be its cardinality and foundation isomorphisms to be bijections, so (S6) is immediate: there is only one foundation of each size up to isomorphism. Since there are $k$ symbols at each vertex, the foundation size is at most $k$ times the crux size (with equality when all the vertices in the crux represent disjoint subsets), proving (S5). A foundation isomorphism $\sigma: A\rightarrow A'$ induces a function on the vertices of the crux, which are $k$-subsets of $A$, via
  \[ \overline{\sigma}(v) = \{ \sigma(i): i \in v \}.\]
  This map respects composition and inverse and also preserves the edge relation of disjointness of symbol sets, proving (S7).

If $T$ is a type of bond of foundation size $a$, then up to foundation isomorphism $T$ can be realized by a bond whose crux is just the fixed set $[a]$, which is a graph on $[a]$ with some additional properties. There are only finitely many such graphs, proving (S8). Finally, the number of foundation isomorphisms from $[a]$ to a subset of $[q]$ is $(q)_a$, which is a polynomial for all $q\in\NN$ (even for $q<a$, for which it evaluates to zero). Therefore we have a two-level polynomial of infinite depth by Theorem \ref{thm:schema}.
\end{proof}

\begin{remark} For Kneser graphs, the map from foundation isomorphisms to crux isomorphisms is neither injective nor surjective in general. 

  \begin{itemize}
  \item For $q \geq 4$, let $\sigma$ be the induced subgraph of $\Kn(q,2)$ on the vertices $12$ and $34$, which is a single edge. Then the nontrivial foundation automorphism that transposes 1 with 2 induces the trivial (identity) crux automorphism.
    
  \item For $q \geq 8$, let $\sigma$ be the induced subgraph of $\Kn(q,3)$ on the vertices 123, 456, and 178 and $\sigma'$ be the induced subgraph on 123, 456, and 127. Then $\sigma$ and $\sigma'$ are both paths of length two, so they are isomorphic as cruxes. But the foundations are of different sizes, so no foundation isomorphism induces this crux isomorphism.   
  \end{itemize}
 The second example indicates why it would be difficult to enumerate a crux isomorphism class directly and we need to use foundations. 
\end{remark}

A related family of graphs are the \emph{Johnson graphs} $J(q,k)$, whose vertices are again the $k$-subsets of $[q]$, but two vertices are adjacent exactly when they share $k-1$ elements. Johnson graphs arise in coding theory \cite{EB} and are central to Babai's recent proof \cite{Bab} that the graph isomorphism problem can be solved in time $\exp\left( (\log n)^{O(1)}\right)$.\footnote{This time function is called quasipolynomial in Babai's paper, but this is not related to our notion of quasi-polynomial.} Their chromatic numbers are not known in general \cite{GoMe}.

\begin{corollary} \label{cor:Johnson}
 Fix $k \geq 1$. Then $f_q(n) = \chi_{J(q,k)}(n)$ is a two-level polynomial of degree function $\binom{q}{k}$ and infinite depth.
\end{corollary}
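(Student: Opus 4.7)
The plan is to follow the template of the proof of Theorem \ref{thm:Kneser} essentially verbatim, since the Johnson graph $J(q,k)$ differs from the Kneser graph $\Kn(q,k)$ only in its edge relation and not in its vertex structure. First I would invoke Proposition \ref{prop:allbonds} with $g(q) = \binom{q}{k}$, which is a numerical polynomial in $q$; this immediately gives (S1), (S2), (S3), and (S4) of Theorem \ref{thm:schema} using the same definitions of objects (bonds of the bond lattice), cruxes (the union of the non-singleton components of a bond), codegree (the rank of the crux as a bond), and crux size (the number of vertices of the crux).

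For (S5)--(S9) I would reuse the definitions from the Kneser proof: the foundation of a bond is the set of symbols in $[q]$ that appear in at least one vertex of the crux, its size is its cardinality, and foundation isomorphisms are bijections between foundations. Then (S5) follows from the same inequality (foundation size $\le k$ times crux size, since each vertex of the crux is a $k$-subset), and (S6) is immediate because there is one foundation isomorphism class per size. The only step that genuinely requires a substantive check is (S7), namely that a foundation bijection $\sigma:A\to A'$ induces a crux isomorphism via $\overline{\sigma}(v)=\{\sigma(i):i\in v\}$. The key observation is that for any bijection $\sigma$ on symbols and any two $k$-subsets $v,v'\subseteq A$,
\[
\bigl|\overline{\sigma}(v)\cap\overline{\sigma}(v')\bigr| \;=\; |v\cap v'|,
\]
so $\overline{\sigma}$ preserves the Johnson adjacency relation $|v\cap v'|=k-1$ just as it preserved disjointness in the Kneser case. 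The compatibility with composition and inverse is formal.

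For (S8), the number of bond types over the foundation $[a]$ is at most the number of graphs on the $\binom{a}{k}$-element vertex set whose edge set is a subset of the Johnson edges, which is finite. For (S9), the number of foundation isomorphisms with domain $[a]$ into $[q]$ is the falling factorial $(q)_a$, which is a polynomial in $q$ valid for all $q\in\NN$ (vanishing when $q<a$), so we may take $e(q)=\infty$.

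I do not anticipate any genuine obstacle: the entire content of the argument is (S7), and that reduces to the trivial fact that any bijection of the ground set preserves intersection cardinalities, hence preserves the Johnson edge relation. With all nine hypotheses verified, Theorem \ref{thm:schema} delivers the conclusion that $\chi_{J(q,k)}(n)$ is a two-level polynomial of degree $\binom{q}{k}$ and infinite depth.
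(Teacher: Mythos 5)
Your proposal is correct and follows exactly the route the paper takes: the paper's proof of this corollary simply reuses the Kneser definitions of foundation, foundation size, and foundation isomorphisms and notes that (S5)--(S9) go through verbatim. Your explicit verification of (S7) --- that a symbol bijection preserves intersection cardinalities and hence the Johnson adjacency relation $\abs{v\cap v'}=k-1$ --- is the one point the paper leaves implicit, and it is the right thing to check.
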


\begin{proof}
We define the foundation, foundation size, and foundation isomorphisms just as for Kneser graphs, and the proofs of (S5) -- (S9) are just as in Theorem \ref{thm:Kneser}. 
\end{proof}

\subsubsection{Products of Complete Graphs}

\begin{theorem} \label{thm:completegraphpowers}
For fixed $k$ and each value of $q$, let $G_q = K_q^k$ be the Cartesian product of $k$ copies of $K_q$. Then $\chi_{G_q}(n)$ is a two-level polynomial of infinite depth. 
\end{theorem}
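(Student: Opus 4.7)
The plan is to apply the schema (Theorem~\ref{thm:schema}) to $G_q = K_q^k$, viewing the vertex set as $[q]^k$ with edges between tuples differing in exactly one coordinate. Since the number of vertices $g(q) = q^k$ is a numerical polynomial in $q$, Proposition~\ref{prop:allbonds} immediately delivers (S1)--(S4), so all the work is in specifying a good foundation structure and verifying (S5)--(S9).

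I would define the foundation of a bond (via its crux $\calC$) to be the $k$-tuple $A = (A_1,\ldots,A_k)$, where $A_j \subseteq [q]$ is the set of symbols that appear in the $j$th coordinate of some vertex of $\calC$. Declaring the size of $A$ to be $|A_1|+\cdots+|A_k|$ gives (S5), since each vertex of $\calC$ contributes one symbol to each $A_j$ and so the foundation size is at most $k$ times the crux size. A foundation isomorphism $A \to A'$ is a $k$-tuple of bijections $\sigma_j : A_j \to A_j'$; since the isomorphism class of a foundation is determined by the vector $(|A_1|,\ldots,|A_k|)$ and a fixed total size $a$ admits at most $\binom{a+k-1}{k-1}$ such vectors, (S6) holds. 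Such a $\sigma$ induces a map $\overline{\sigma}$ on $k$-tuples by applying $\sigma_j$ in coordinate $j$; coordinatewise symbol permutations preserve the relation ``differ in exactly one coordinate,'' so $\overline{\sigma}$ restricts to a crux isomorphism. Composition and inversion pass through coordinatewise, yielding (S7).

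For (S8), a crux type whose foundation class has size vector $(a_1,\ldots,a_k)$ can be realised by a subgraph of the induced subgraph of $K_q^k$ on $[a_1]\times\cdots\times[a_k]$, a fixed finite vertex set, so only finitely many types arise. For (S9), a foundation isomorphism with domain $A = (A_1,\ldots,A_k)$ is determined by choosing, in each coordinate $j$, an image set $A_j' \subseteq [q]$ of size $|A_j|$ and a bijection $A_j \to A_j'$; hence
\[\psi_A(q) = \prod_{j=1}^{k}(q)_{|A_j|},\]
which is a polynomial in $q$ (as $k$ is fixed) valid for all $q \in \NN$, vanishing automatically when $q < |A_j|$ for some $j$. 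Thus the hypothesis of (S9) is satisfied with $e(q) = \infty$, and the schema concludes that $\chi_{G_q}(n)$ is a two-level polynomial of degree $q^k$ and infinite depth.

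The main obstacle is conceptual rather than computational: the foundation must be chosen so that foundation isomorphisms genuinely induce crux isomorphisms while still admitting a polynomial count. A coarser choice such as the union $\bigcup_j A_j$ loses the coordinate data needed to recover adjacency, so the induced maps would not generally be crux isomorphisms; a finer choice that tries to exploit the extra automorphisms of $K_q^k$ obtained by permuting the $k$ coordinates would ruin (S7), since a swap of coordinates $j$ and $j'$ only lifts to an adjacency-preserving map when $A_j$ and $A_{j'}$ are handled compatibly. The coordinate-indexed foundation above threads both needles, and it is this choice (plus the fact that $k$ is fixed, keeping the number of factors in $\psi_A(q)$ bounded) that makes the proof go through.
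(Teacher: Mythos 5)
Your proposal is correct and follows essentially the same route as the paper's proof: bonds as objects with (S1)--(S4) from Proposition~\ref{prop:allbonds}, the coordinate-indexed tuple $(A_1,\ldots,A_k)$ of projected symbol sets as the foundation, tuples of bijections as foundation isomorphisms, and the count $\prod_j (q)_{|A_j|}$ for (S9). The closing discussion of why coarser or finer foundations fail is a sensible addition but does not change the argument.
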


\begin{proof}
  We take objects to be the bonds as before, so (S1), (S2), (S3), and (S4) hold by Proposition \ref{prop:allbonds}. A vertex of $G_q$ is a tuple $(i_1, \dots, i_k)$ where $i_j \in [q]$ for each $j$, and an edge is a pair of tuples that differ in exactly one index. Define the foundation of a bond $B$ to be the $k$-tuple of sets $\calA = (A_1, \dots, A_k)$ where
  \[ A_j = \{ i \in [q]: \; \exists (i_1, \dots, i_k) \in \textup{Vert}(\textup{crux}(B)) \textup{ such that } i_j = i\}.\]
  Let $a_j = |A_j|$ for each $j$ and define the size of the foundation $\calA$ to be $\sum_{j=1}^k a_j$. For each $j$, the distinct elements of $A_j$ are necessarily obtained from distinct vertices of the crux, so
  \[ \textup{size}(\calA) = \sum_{j=1}^k a_j \leq k \max\{a_1, \dots, a_k\} \leq k \textup{ size}(\calC),\]
  proving (S5).

  We define a foundation isomorphism from $(A_1, \dots, A_k)$ to $(B_1, \dots, B_k)$ to be any $k$-tuple $(\phi_1, \dots, \phi_k)$ where $\phi_j$ is a bijection from $A_j$ to $B_j$. An isomorphism class of foundations of size $u$ is a $k$-tuple of set sizes whose sum is $u$, so these classes are in bijection with $k$-compositions of $u$, proving (S6). Any permutation of the vertices within each copy of $K_q$ in $G_q$ is a graph automorphism. Since the foundation (unlike the case of Kneser graphs) is simply a subset of the vertices, there is no issue with composition or inverses of these automorphisms, proving (S7). Given a crux isomorphism class $\calC$, up to foundation isomorphism we may choose an object whose crux class is $\calC$ and whose foundation is a tuple of the form $([a_1], \dots, [a_k])$. The vertices of the crux must then be a subset of the fixed set $A_1 \times\cdots\times A_k$. There are only finitely many graphs on a fixed set, proving (S8). Finally, the number of foundation isomorphisms with domain $([a_1], \dots, [a_k])$ is $(q)_{a_1} (q)_{a_2} \cdots (q)_{a_k}$ which is a polynomial for all $q \in \NN$, proving (S9).   
\end{proof}

\begin{remark} We can generate more families of graphs whose chromatic polynomials are two-level polynomials via Cartesian products with fixed graphs. For example, let $H$ be a fixed finite graph. For fixed $k$, the functions
  $\chi_{(K_q)^k \times H}(n)$, $\chi_{\Kn(q,k) \times H}(n)$, and $\chi_{J(q,k) \times H}(n)$ are two-level polynomials of infinite depth. We may further combine graphs like $K_q$ and $\Kn(q,k)$ freely, even using other graph products (tensor product, strong product) that have a similar localized structure. The key is that the graphs have sufficiently large automorphism groups so that foundations can simply be sets of some fixed size. We leave the details to the reader.
\end{remark}

\subsection{Other graph polynomials}
\begin{remark} \label{rem:notonlybonds}
Many of the ideas used to prove that families of chromatic polynomials yield two-level polynomials extend naturally to other polynomials associated to graphs. In particular, the proofs of (S5) through (S9) in Theorems \ref{thm:Kneser} and \ref{thm:completegraphpowers} depend on the definition of foundation and foundation isomorphism but not on the definition of cruxes as bonds. They only require that cruxes are a family of subgraphs closed under graph isomorphism. We will briefly cover two graph polynomials for which our approach extends easily, and one for which the question remains open. 
\end{remark}

\subsubsection{Characteristic polynomial}
Given a graph $G$, the characteristic polynomial is defined to be $p_G(n)= \det(nI - A)$, where $A$ is the adjacency matrix of $G$. By \cite[Proposition 1]{EM2}, we have 
\begin{equation} p_G(n) = \sum_{S\subseteq V(G)} (-1)^{\abs{S}}a_S \cdot n^{d-\abs{S}}, \label{eqn:characteristic} \end{equation}
where $S$ is a subset of the vertices of $G$, $a_S$ is the determinant of the adjacency matrix of the induced graph with vertex set $S$, and $d$ is the number of vertices of $G$.

\begin{corollary} \label{cor:characteristic}
  For fixed $k$, the characteristic polynomials of the familes $\Kn(q,k)$, $J(q,k)$, and $K_q^k$ are two-level polynomials of infinite depth. 
\end{corollary}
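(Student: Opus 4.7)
My plan is to apply Theorem~\ref{thm:schema} in parallel with the chromatic polynomial proofs of Theorem~\ref{thm:Kneser}, Corollary~\ref{cor:Johnson}, and Theorem~\ref{thm:completegraphpowers}, using the expansion (\ref{eqn:characteristic}) in place of the bond-lattice expansion (\ref{eq:Moebiusonbonds}). Remark~\ref{rem:notonlybonds} already signals that nothing in the arguments for (S5)--(S9) in those proofs used the bond-lattice origin of the cruxes, only that cruxes form a family of subgraphs closed under graph isomorphism. So the substantive work is to set up (S1)--(S4) for the characteristic polynomial in the form of an analogue of Proposition~\ref{prop:allbonds}; the three families $\Kn(q,k)$, $J(q,k)$, and $K_q^k$ will then follow by copying the rest of their chromatic polynomial proofs verbatim.

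To set up (S1)--(S4), I would take the objects to be subsets $S \subseteq V(G_q)$ whose induced subgraph has no isolated vertex: any other $S$ yields $a_S = 0$ via a zero row of the adjacency matrix, so nothing is lost, and (\ref{eqn:characteristic}) supplies (S1). The crux of $S$ is the induced subgraph $G_q[S]$, with graph isomorphisms playing the role of crux isomorphisms; since $a_S$ depends only on the isomorphism class $\calC$, any object with crux in $\calC$ contributes
\[ f_q^{\calC}(n) = (-1)^{y} a_\calC \, n^{g(q)-y}, \]
where $y$ is the number of vertices of $\calC$ and $g(q) = |V(G_q)|$. This is a two-level polynomial of infinite depth by Corollary~\ref{cor:constantdegree}, so (S2) holds. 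The codegree $r_\calC = y$ is nonnegative and uniquely zero for $\calC = \emptyset$ (with $a_\emptyset = 1$), giving (S3), and the crux size is exactly $y = r_\calC$, giving (S4).

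For (S5)--(S9), I would import the foundation definitions from the three chromatic polynomial proofs: for $\Kn(q,k)$ and $J(q,k)$, the foundation of a crux is the set of symbols appearing in its vertices, with set bijections as foundation isomorphisms; for $K_q^k$, the foundation is the $k$-tuple of sets of coordinate values that appear, with componentwise bijections as foundation isomorphisms. Because the cruxes here are induced subgraphs and the vertex sets of $\Kn(q,k)$, $J(q,k)$, and $K_q^k$ carry the same symmetries that were exploited before, a foundation isomorphism acts on vertices of a crux exactly as in the chromatic polynomial proofs and therefore induces a graph isomorphism of the induced subgraph. The size bounds, the finiteness of crux types over a fixed foundation, and the polynomial counts $\psi_A(q) = (q)_a$ or $\prod_j (q)_{a_j}$ all carry over without change.

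The main step I would worry about is essentially absent: the only potential snag is checking that the ``no isolated vertex in the induced subgraph'' condition on objects is preserved under foundation isomorphisms, which is immediate since those isomorphisms are bijections on symbols or coordinates and thus preserve the edge structure of the induced subgraph and hence the property that every vertex has positive degree. So (S1)--(S9) all hold, Theorem~\ref{thm:schema} applies, and the characteristic polynomials of $\Kn(q,k)$, $J(q,k)$, and $K_q^k$ are two-level polynomials of infinite depth (of degree $\binom{q}{k}$, $\binom{q}{k}$, and $q^k$, respectively).
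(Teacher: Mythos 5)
Your proposal is correct and follows essentially the same route as the paper: objects are the subsets $S$, the crux is the induced subgraph on $S$, (S1)--(S4) are verified directly from (\ref{eqn:characteristic}) with codegree equal to the crux size $\abs{S}$, and (S5)--(S9) are imported unchanged from the chromatic-polynomial proofs via Remark~\ref{rem:notonlybonds}. The only cosmetic differences are that the paper keeps all subsets $S$ as objects (those with isolated vertices contribute zero terms, so your pruning is harmless), and that the justification for $n^{g(q)-y}$ being a two-level polynomial of infinite depth should really be Corollary~\ref{cor:powers} rather than Corollary~\ref{cor:constantdegree}.
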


\begin{proof} By Remark \ref{rem:notonlybonds}, it suffices to prove the first four properties of the schema, and the proofs of these four will be identical for all three families.  
\begin{enumerate}
\item[(S1)] We have written the sum in (\ref{eqn:characteristic}). The objects are the sets $S\subseteq V(G_q)$. Given $S$, the term $(-1)^{\abs{S}}a_S \cdot n^{d(q)-\abs{S}}$ is polynomial in $n$.
\item[(S2)] Define the crux of an object $S$ to be $S$ itself (or equivalently, the induced subgraph on $S$). If $S$ is of type $\mathcal C$, then $f^{\mathcal C}_q(n)=(-1)^{\abs{S}}a_S \cdot n^{d(q)-\abs{S}}$ is a two-level polynomial of infinite depth.
\item[(S3)] The degree of the characteristic polynomial is $d(q)$, obtained when $S$ is the empty set. The codegree of $f^{\mathcal C}_q(n)$ is therefore $r(\mathcal C)=\abs{S}$.
\item[(S4)] The size of the crux is simply $\abs{S}$ which is exactly the codegree.
\end{enumerate}
\end{proof}

\subsubsection{Matching polynomials}
Matching polynomials (also called acyclic polynomials) were studied in the context of theoretical chemistry and then for their general mathematical properties; matching polynomials of specific families of graphs yield important families such as Chebyshev, Hermite, and Laguerre polynomials \cite{GG}. 

Let $G$ be a graph with $d$ vertices, and let $\mathcal{M}_i(G)$ be the set of $i$-matchings on $G$: that is, collections of $i$ edges involving $2i$ distinct vertices. Then define the matching polynomial (see \cite[Section 3.1]{EM2}) to be
\begin{equation} \mu_G(n)=\sum_{i\ge 0} (-1)^i \abs{\mathcal{M}_i(G)}n^{d-2i}. \label{eqn:matchingpolyn} \end{equation}
\begin{corollary} \label{cor:matching}
  For fixed $k$, the matching polynomials of the familes $\Kn(q,k)$, $J(q,k)$, and $K_q^k$ are two-level polynomials of infinite depth. 
\end{corollary}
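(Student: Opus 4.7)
The plan is to apply Theorem~\ref{thm:schema} in direct analogy with Corollary~\ref{cor:characteristic}. By Remark~\ref{rem:notonlybonds}, the verifications of (S5)--(S9) already carried out in Theorem~\ref{thm:Kneser}, Corollary~\ref{cor:Johnson}, and Theorem~\ref{thm:completegraphpowers} depend only on the foundation apparatus (sets of symbols for Kneser and Johnson graphs; tuples of coordinates for $K_q^k$) and on the fact that cruxes form a family of subgraphs closed under graph isomorphism. Matchings are such a family, so it suffices to verify (S1)--(S4), and the rest of the argument transfers.

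For (S1), I rewrite (\ref{eqn:matchingpolyn}) as
\[\mu_{G_q}(n) = \sum_{i\ge 0}\sum_{M\in \mathcal{M}_i(G_q)}(-1)^i n^{d(q)-2i},\]
where $d(q)$ is the (polynomial in $q$) number of vertices of $G_q$, and the objects are the individual matchings $M$. For (S2), I take the crux of a matching $M$ to be the subgraph on its $2i$ endpoints with its $i$ edges, and crux isomorphisms to be graph isomorphisms. Since any such subgraph is simply a disjoint union of $i$ edges, the isomorphism class is determined by $i$ alone, and the shared contribution $f_q^{\mathcal{C}}(n) = (-1)^i n^{d(q)-2i}$ is a two-level polynomial of infinite depth by Corollary~\ref{cor:powers}.

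For (S3), the empty matching ($i=0$) yields the unique maximum-degree term $n^{d(q)}$, and every other isomorphism class has codegree $r_{\mathcal{C}} = 2i$. For (S4), the crux has exactly $2i = r_{\mathcal{C}}$ vertices, which gives the required bound. Foundations and foundation isomorphisms are defined precisely as in the chromatic polynomial case for each of the three families. A foundation isomorphism permutes the underlying labels and thus induces a graph automorphism of the ambient graph $G_q$, which in turn carries matching edges to matching edges, producing the induced crux isomorphism required by (S7).

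The main step requiring care, though not a serious obstacle, is verifying that the induced map on a crux lands back in our class (it does, because any automorphism of $G_q$ preserves the property of being a matching) and that only finitely many crux types sit over each foundation isomorphism class (there are only finitely many matchings on a fixed vertex set). With these observations in place, Theorem~\ref{thm:schema} applies and yields two-level polynomials of infinite depth for the matching polynomials of $\Kn(q,k)$, $J(q,k)$, and $K_q^k$.
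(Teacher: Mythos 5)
Your proof is correct and follows essentially the same route as the paper: both reduce to verifying (S1)--(S4) via Remark~\ref{rem:notonlybonds}, taking the objects to be the $i$-matchings, with the codegree $2i$ equal to the crux size. The only (immaterial) difference is that you define the crux to be the matching edges themselves, whereas the paper uses the induced subgraph on the $2i$ matched vertices; your choice has the small advantage of making the object-to-crux map genuinely bijective, as (S2) requests.
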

  Again, it suffices to prove the first four properties of the schema. Taking the objects to be $i$-matchings, we can write (\ref{eqn:matchingpolyn}) as 
  \[\mu_{G_q}(n)=\sum_{i\ge 0} (-1)^i \sum_{o\in \mathcal{M}_i(G_q)}n^{d-2i},\]
  establishing (S1). Let the crux be the induced subgraph on the set of matched vertices, which will have $2i$ vertices. Then (S2) through (S4) follow as in Corollary \ref{cor:characteristic}, noting that the size of the crux ($2i$) is exactly the codegree.
  
\subsubsection{An open question: flow polynomials}
The flow polynomial \cite[\S IX.4]{Tutte01} of a graph $G = (V,E)$, which counts nowhere zero flows modulo $n$, can be defined as
\[ F_G(n) = (-1)^{|E|} \sum_{S \subseteq E} (-1)^{|S|} n^{\rho_1(G;S)} \]
where $\rho_1(G;S)$ denotes the rank of the first homology group of the graph $(V,S)$. Like the chromatic, characteristic, and matching polynomials, it can be obtained as an evaluation of the Tutte polynomial \cite{EM1}. As with the characteristic polynomial, one might apply the schema by taking the objects to be the subsets $S \subseteq E$ to obtain (S1). Again, the crux would be the subgraph $(V,S)$ to obtain (S2). A highest degree term occurs when $S = E$, so it would be natural to define the ``size'' of a crux to be the number of edges in the complement of $S$. If the graph is bridgeless, then this term is unique, obtaining (S3). The assumption of bridgelessness is harmless because the flow polynomial is identically zero if $G$ has a bridge. However, (S4) will not generally hold: we can often remove many edges without affecting the first Betti number.

For example, for the family of $q$-cycles there is a single term of degree one given by $S = E$ and all of the other terms ($2^{q} - 1$ of them) are of degree zero, so we cannot directly apply the schema in this way. However, the flow polynomial of the $q$-cycle is $n-1$, which \emph{is} a two-level polynomial.

\begin{question} For fixed $k$, do the flow polynomials of $\Kn(q,k)$, $J(q,k)$, and $K_q^k$ form two-level polynomials? 
\end{question}

\section{Ehrhart theory, chess queens, and Sidon sets} \label{sect:Ehrhart}
An equivalent way to define the bond lattice of a graph $G$ is as the lattice of flats of the arrangement of hyperplanes $x_i = x_j$ for each edge $ij$ of $G$. From this point of view, the proper colorings of $G$ are in bijection with the lattice points in the interior of the dilated $q$-cube $(n+1)C_q^\circ$ (where $C_q=[0,1]^q$) that do not lie on any of the hyperplanes, and each term $n^{v-\textup{rank}(B)}$ in (\ref{eq:Moebiusonbonds}) counts the number of lattice points in the intersection of the flat corresponding to $B$ with $(n+1)C_q^\circ$. Ehrhart theory (see \cite{BR}) guarantees that the number of lattice points will be quasi-polynomial in $n$, and the \emph{inside-out polytopes} of Beck and Zaslavsky \cite{BeZa} make the M\"obius inversion calculations precise. In particular, for families of counting problems that can be expressed using highly symmetric arrangements, one can often show that the counting functions are two-level polynomials. This is in fact how the results on chess queens (and other pieces that move arbitrarily far in straight lines) \cite{CHZ} that motivated our work were proved. To illustrate the generality of this approach, we will use it first to reprove the queens result and then to prove an analogous result for the apparently unrelated problem of counting Sidon sets of length $n$ and size $q$. As another illustration, we extend the queens result to cover the case of mutually nonattacking black and white queens, which requires using a subspace arrangement rather than a hyperplane arrangement.

Let $D=\RR^d$, $P$ be a full-dimensional rational polytope in $D$, and $b \geq 1$. We will be interested in lattice points in $\left(nP^q \right)^\circ$, the interior of the $n$th dilate of the Cartesian product of $q$ copies of $P$. Let $\calH$ be a linear hyperplane arrangement (that is, a finite set of hyperplanes that all contain the origin)  in $D^b=\RR^{db}$.  For each $q$, define a hyperplane arrangement $\calH_q$ in $D^q$ as follows: for each hyperplane $H \in \calH$ and each injection $\sigma$ from $[b]$ to $[q]$, there is a hyperplane $H_{\sigma, q} \in \calH_q$ obtained by reindexing copies of $D$ according to $\sigma$. It follows that $\calH_q$ is invariant under permutations of the $q$ copies of $D$.

\begin{theorem} \label{thm:allEhrhart}
   Under the hypotheses of the preceding paragraph, let $f_q(n)$ denote the number of lattice points in $\left(nP^q \right)^\circ$ that avoid all of the hyperplanes in $\calH_q$. Then $\frac{1}{\Vol(P)^q}f_q(n)$ is a two-level polynomial of infinite depth. 
  \end{theorem}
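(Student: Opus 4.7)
The plan is to apply Theorem~\ref{thm:schema} to the Möbius expansion given by the inside-out polytope machinery of Beck--Zaslavsky. Let $\calL_q$ denote the intersection lattice of $\calH_q$ and $L_K(n)$ the Ehrhart quasi-polynomial of a rational polytope $K$. Inside-out Möbius inversion over $\calL_q$ expresses $f_q(n)$ as a signed sum
\[ f_q(n) = \sum_{F \in \calL_q} \mu_{\calL_q}(\hat 0, F) \cdot L_{F \cap P^q}(n). \]
Because $\calH_q$ is built from injections $[b] \to [q]$, every flat $F$ has the product structure $F = F_A \times D^{[q]\setminus A}$, where $A \subseteq [q]$ is the set of blocks on which $F$ imposes constraints and $F_A$ is the corresponding flat of the restricted arrangement on $D^A$. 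Consequently $L_{F \cap P^q}(n) = L_{F_A \cap P^A}(n) \cdot L_P(n)^{q-|A|}$, and after dividing by $\Vol(P)^q$ the ``free-block'' factor $(L_P(n)/\Vol(P))^{q-|A|}$ is a two-level quasi-polynomial of infinite depth by Corollary~\ref{cor:powers} (using that $L_P(n)$ has constant leading coefficient $\Vol(P)$). The remaining factor $\mu_{\calL_q}(\hat 0, F) \cdot L_{F_A \cap P^A}(n)/\Vol(P)^{|A|}$ is a single quasi-polynomial in $n$ that is independent of $q$, so Proposition~\ref{prop:products} packages each summand into a two-level quasi-polynomial of infinite depth.

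Next I would match the setup to the nine hypotheses of the schema. The objects in $\calO_q$ are the flats $F \in \calL_q$, giving (S1). The crux of $F$ is the pair $(A, F_A)$, with crux isomorphisms induced by relabelings of blocks; by the symmetric construction of $\calH_q$ any two flats of the same crux type produce the identical summand above, giving (S2). The degree function is $g(q) = qd$ and the codegree of the $F$-term equals $r_\calC = \rank(F_A)$, independent of $q$; the unique codegree-zero term comes from $F = D^q$, giving (S3). The foundation of a crux is simply the set $A$, with size $|A|$, and foundation isomorphisms are arbitrary bijections between foundation sets. Because $\calH_q$ is constructed symmetrically across blocks, each such bijection induces a crux isomorphism, giving (S5), (S6), and (S7). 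For any fixed $a$, crux types over the foundation $[a]$ correspond to orbits of flats of the finite restricted arrangement on $D^a$ under the symmetric group on $[a]$, and there are only finitely many, giving (S8). The number of foundation isomorphisms from $[a]$ into $[q]$ is $(q)_a$, a polynomial valid for all $q \in \NN$, giving (S9) with $e \equiv \infty$.

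The main obstacle is (S4): bounding $|A|$ by a function of the codegree $r_\calC$. Here I would use that any flat $F$ of rank $r_\calC$ is an intersection of some $r_\calC$ hyperplanes of $\calH_q$ (a geometric-lattice property). Each such hyperplane has the form $H_{\sigma, q}$ for an injection $\sigma : [b] \to [q]$ and so constrains at most $b$ blocks. Any block not in the union of these $r_\calC$ images is unconstrained in $F$, so $|A| \le b \cdot r_\calC$, as required. With all nine hypotheses verified, Theorem~\ref{thm:schema} concludes that $f_q(n)/\Vol(P)^q$ is a two-level (quasi-)polynomial of degree $qd$ and infinite depth, as claimed.
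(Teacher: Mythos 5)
Your proposal is correct and follows essentially the same route as the paper's proof: Möbius inversion over the intersection poset, the product decomposition $F = F' \times D^{[q]\setminus A}$ giving crux and foundation, the factorization $E_{F,q}(n) = E_{P^A\cap F'}(n)\cdot E_P(n)^{q-|A|}$ handled by Corollary~\ref{cor:powers} and Proposition~\ref{prop:products}, and the same (S4) bound via writing a rank-$r$ flat as an intersection of $r$ hyperplanes each constraining boundedly many blocks. The only cosmetic difference is that the paper works with the subposet $\overline{\calL_q}$ of flats meeting $(P^q)^\circ$ (which is what guarantees in (S3) that the codegree of each term equals the codimension of the flat), whereas you sum over all of $\calL_q$.
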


\begin{proof}
Let $\calL_q$ be the lattice of flats of $\calH_q$ (the nonempty intersections of hyperplanes in $\calH_q$, ordered by reverse inclusion), and $\overline{\calL_q}$ be the sublattice of $\calL_q$ consisting of the flats that intersect the interior of $P^q$. Both of these lattices are again invariant under permutation of the $q$ copies of $D$. To apply the schema, we define our objects to be the flats in $\overline{\calL_q}$.  By M\"obius inversion, as in the paper on inside-out polytopes \cite{BeZa}, we obtain
\begin{equation} f_q(n) = \sum_{F \in \overline{\calL_q}} \mu\big([\hat{0},F]\big)E_{F}(q;n) \label{eqn:inside-out} \end{equation}
  where $E_{F}(q;n)$ is the number of lattice points in $\left(nP^q\right)^\circ \cap F$, which is a quasi-polynomial in $n$ by Ehrhart's theorem. For each flat $F \in \overline{\calL_q}$, there is a unique subset $A \subseteq [q]$ such that
  \[ F = F' \times D^{[q] \setminus A} \]
  where $F' \subseteq D^A$ is a subspace satisfying the property that for each $i \in A$, at least one of the coordinates on the copy of $D$ indexed by $i$ is restricted by some hyperplane in $\calH_q$ that contains $F$. We define the crux of $F$ to be the subspace $F'$ and the foundation of $F$ to be the set $A$. We define foundation isomorphisms to be set bijections and the size of a foundation to be its cardinality. In addition, we simply define the size of the crux to be the size of its foundation and a crux isomorphism to be any vector space isomorphism induced by relabeling the copies of $D$ according to a foundation isomorphism. 

  We proceed to verify the nine hypotheses of the schema, with depth function $e(q) = \infty$ in (S9) for every type.  We have already seen that (S1) holds by Ehrhart's theorem and (\ref{eqn:inside-out}). Given a flat $F \in \overline{\calL_q}$ of crux $F'$ and foundation $A$ of size $a$, we have by the definition of crux that 
    \[ P^q \cap F = (P^A \cap F') \times P^{[q] \setminus A} \]
    which, defining $E_S(n)$ to be the number of lattice points in $nS$ (for a set $S$), implies that 
    \[ E_F(q;n) = E_{P^A \cap F'}(n) \cdot E_P(n)^{q-a}.\]
    
    Since the hyperplane arrangement is linear, both $E_{P^A \cap F'}(n)$ and $E_P(n)$ are quasi-polynomials in $n$ by Ehrhart's theorem, and the leading coefficient of $E_P(n)$ is $\Vol(P)$. By Corollary \ref{cor:powers}, $E_P(n)^{q-a}/\Vol(P)^{q-a}$ is a two-level quasi-polynomial of infinite depth. Dividing by the constant $\Vol(P)^a$ and multiplying by the quasipolynomial $E_{P^A \cap F'}(n)$, we conclude by Proposition \ref{prop:products} that $E_F(q;n)$ is again a two-level quasi-polynomial of infinite depth. Furthermore, a crux isomorphism does not affect either factor: $E_{P^A \cap F'}(n)$ is preserved because of the symmetry of $\calH_q$, and $E_P(n)^{q-a}$ does not depend on the crux at all. This proves (S2). 
    
    Also, $P^q$ is full-dimensional in $D^q = \RR^{dq}$ and so the largest flat in $\overline{\calH_q}$, obtained as the intersection of $(P^q)^\circ$ with no hyperplanes, indexes a term of degree exactly $dq$. Let $F$ be any flat of codimension $r > 0$ in $\overline{\calL_q}$. Since $F$ intersects $\left(P^q\right)^\circ$, the term $E_F(q;n)$ is of degree $dq-r$, proving (S3). Furthermore,  we can write $F$ as the intersection of $r$ hyperplanes in $\calH_q$. Since each hyperplane $H \in \calH_q$ is obtained by reindexing a hyperplane in the fixed arrangement $\calH$, there is a maximum number $M$ of coordinates that can be restricted in $H$ independent of $q$. Thus at most $Mr$ coordinates are restricted in $F$. In particular, the crux size is also bounded by $Mr$, proving (S4).



Next, (S5) is immediate because crux size and foundation size coincide, and (S6) holds because there is just one set of size $a$ up to set bijection. Since crux isomorphisms are defined to be the linear maps induced by permuting copies of $D$ via foundation isomorphisms, (S7) is also immediate. Given a number $a$, if $T$ is a type of flat of foundation size $a$ then (via set bijections) there is a flat of type $T$ in the fixed lattice $\overline{\calL_a}$, so there can be only finitely many such types, proving (S8). Finally, if $|A| = a$ then the number of foundation isomorphisms from $A$ to a subset of $[q]$ is $(q)_a$, which is a polynomial for all $q$ and vanishes for $q > a$, proving (S9) with $e(q) = \infty$ regardless of the type. 
\end{proof}

We illustrate how our method easily reproves the motivating result of Chaiken, Hanusa, and Zaslavsky. For simplicity, we prove only the special case of nonattacking queens on an ordinary $n \times n$ board, but as in their theorem, the same argument works for any piece whose allowed moves are the integer multiples of a fixed, finite set of integer directions, and on any lattice polygonal board. 

\begin{corollary}\cite[Theorem 4.2]{CHZ} \label{cor:queens} The number of placements of $q$ labeled nonattacking queens on an $n \times n$ chessboard is a two-level quasi-polynomial of infinite depth. 
\end{corollary}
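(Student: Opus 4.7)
The plan is to realize the queen-counting function as an instance of Theorem~\ref{thm:allEhrhart} and then invoke that theorem directly. The substantive content is not in any combinatorics beyond what Theorem~\ref{thm:allEhrhart} already packages, but in identifying the right polytope and hyperplane data.

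Take $d=2$, $D=\RR^2$ with coordinates $(x,y)$, $P=[0,1]^2$ (so $\Vol(P)=1$), and $b=2$. A placement of $q$ labeled queens on an $m\times m$ board is a tuple $(x_1,y_1,\dots,x_q,y_q)\in\ZZ^{2q}$ with $1\le x_i,y_i\le m$, which is exactly a lattice point in the interior of $(m+1)P^q$. Two queens $(x_1,y_1)$ and $(x_2,y_2)$ attack each other precisely when they share a row, column, or diagonal, which corresponds to one of four linear hyperplanes in $D^2=\RR^4$:
\[
x_1=x_2,\qquad y_1=y_2,\qquad x_1-x_2=y_1-y_2,\qquad x_1-x_2=y_2-y_1.
\]
Let $\calH$ be this set of four hyperplanes through the origin. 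For each injection $\sigma\colon[2]\to[q]$ picking an ordered pair of queens, the reindexed hyperplanes in $D^q$ are exactly the attacking conditions between the two selected queens, so $\calH_q$ is precisely the queen-attacking arrangement.

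Writing $\tilde f_q(n)$ for the lattice-point count supplied by Theorem~\ref{thm:allEhrhart} with this data, the number of placements of $q$ nonattacking queens on an $n\times n$ board equals $\tilde f_q(n+1)$. Since $\Vol(P)=1$, Theorem~\ref{thm:allEhrhart} yields that $\tilde f_q(n)$ is itself a two-level quasi-polynomial of infinite depth. Applying Proposition~\ref{prop:evaluation} with the linear polynomial $h(n)=n+1$ (so $d=c=1$) leaves the degree, depth, and period functions unchanged, so $\tilde f_q(n+1)$ is a two-level quasi-polynomial of infinite depth as well, which is the claim.

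The only real obstacle is bookkeeping: one must verify that the four attacking conditions really are \emph{linear} hyperplanes in $D^2$ (so that $\calH$ satisfies the hypothesis of Theorem~\ref{thm:allEhrhart}) and that reindexing by all injections $\sigma\colon[2]\to[q]$ recovers the full queen-attacking arrangement on $q$ queens without missing or distorting any constraint. Once this identification is made, the entire inside-out machinery is hidden inside Theorem~\ref{thm:allEhrhart}. The argument extends verbatim to any piece whose legal moves are integer multiples of a fixed finite set of integer directions (replace the four hyperplane forms by the corresponding direction hyperplanes) and to any lattice polygonal board (replace $P$ by the appropriate rational polytope, then renormalize by $\Vol(P)^q$ as dictated by the theorem).
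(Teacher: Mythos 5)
Your proposal is correct and follows essentially the same route as the paper: the same choice of $d=2$, $P=[0,1]^2$, $b=2$, the same four linear hyperplanes generating $\calH_q$ (your third and fourth conditions are just rewritings of $x_1-y_1=x_2-y_2$ and $x_1+y_1=x_2+y_2$), an appeal to Theorem~\ref{thm:allEhrhart}, and the same final shift $n\mapsto n+1$ handled by Proposition~\ref{prop:evaluation}. Nothing further is needed.
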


\begin{proof}
We take $d=2$ so $P$ is the unit square, with coordinates $(x_i, y_i)$ on each copy of $D = \RR^2$. Let $f_q(n)$ be the number of lattice points in $\left( nP^q \right)^\circ$ that avoid the hyperplanes $x_i = x_j$, $y_i = y_j$, $x_i+y_i = x_j+y_j$ and $x_i - y_i = x_j-y_j$ whenever $1 \leq i < j \leq n$. All of these hyperplanes are linear, and they are obtained in the manner prescribed by Theorem \ref{thm:allEhrhart} from the arrangement in $D^2$ given by 
\[ \calH = \{ x_1 = x_2; \; y_1 = y_2; \; x_1 + y_1 = x_2 + y_2; \; x_1 - y_1 = x_2 - y_2\}. \]
By Theorem \ref{thm:allEhrhart}, $f_q(n)$ is a two-level quasi-polynomial of infinite depth. Now since $f_q(n)$ counts points in the interior, the coordinates of each queen are integers between $1$ and $n-1$, so $f_q(n)$ represents placements on an $(n-1) \times (n-1)$ board. But by Proposition \ref{prop:evaluation}, $f_q(n+1)$ is again a two-level quasi-polynomial of infinite depth, completing the proof. 
\end{proof}

\subsection{Sidon sets} A \emph{Sidon set} \cite{Sidon} (also known as a Golomb ruler) of length $n$ and size $q$ is a sequence of numbers $0 = s_1 < s_2 < \dots < s_q = n$ such that all of the pairwise differences are distinct. The most-studied question about such sets is how dense they can be; that is, how large can $q$ be as a function of $n$? See \cite{O'B} for results on this and many related problems. Beck, Bogart, and Pham \cite{BBP} studied the somewhat different problem of counting Sidon sets when both $n$ and $q$ are given. They showed using inside-out polytopes that for each $q$, the counting function is a quasi-polynomial in $n$.

To obtain a two-level quasi-polynomial, we slightly modify the problem. First, we consider sets of length \emph{at most} $n$; that is, we do not require $s_1 = 0$ nor $s_q = n$. The original problem is recovered via successive differences. Second, we consider \emph{ordered Sidon sets}: that is, tuples $(s_1, \dots s_q)$ without requiring that the elements be in increasing order. Since the elements of a Sidon set are always distinct, this simply multiplies the counting function by $q!$. We also translate the sets to lie between 1 and $n+1$ rather than between 0 and $n$, which does not affect the count. 

\begin{theorem} \label{thm:Sidon}  
  The number of ordered Sidon sets of length at most $n$ and size $q$ is a two-level quasi-polynomial of degree function $q$ and infinite depth. 
\end{theorem}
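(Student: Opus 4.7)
The plan is to realize the counting problem as counting lattice points in the interior of a dilated lattice cube while avoiding a highly symmetric linear hyperplane arrangement, and then invoke Theorem~\ref{thm:allEhrhart} and Proposition~\ref{prop:evaluation}. Take $d = 1$, $D = \RR$, $P = [0,1]$ (so $\Vol(P) = 1$), and $b = 4$. An ordered Sidon set of size $q$ with entries in $\{1, \ldots, m-1\}$ is a tuple $(s_1, \ldots, s_q)$ of integers in that range for which all sums $s_i + s_j$ with $i \leq j$ are distinct; equivalently, the forbidden equations are $s_i + s_j = s_k + s_\ell$ as $\{i, j\} \neq \{k, \ell\}$ range over multisets of size $2$.

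The first task is to sort these forbidden equations by the size of the index set $\{i, j, k, \ell\}$. A short case analysis produces three families: $x_i = x_j$ for $i\neq j$ (distinctness, arising from pairs that overlap in one index or are both diagonal), $2x_i = x_j + x_k$ with $i,j,k$ distinct (the arithmetic-progression case, coming from a diagonal pair equalling a nondiagonal pair), and $x_i + x_j = x_k + x_\ell$ with $i,j,k,\ell$ distinct (the generic four-index Sidon relation). All three families come from the single linear arrangement in $\RR^4$
\[ \calH = \{x_1 = x_2,\ 2x_3 = x_1 + x_2,\ x_1 + x_2 = x_3 + x_4\}, \]
and as $\sigma$ ranges over injections $[4] \to [q]$, reindexing produces exactly the hyperplanes in $\RR^q$ we need (dummy coordinates in $\sigma$ merely cause the same hyperplane in $\RR^q$ to be generated several times, which is harmless since $\calH_q$ is a set). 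Let $f_q(m)$ denote the number of lattice points in $(mP^q)^\circ = (0,m)^q$ avoiding every hyperplane of $\calH_q$; by Theorem~\ref{thm:allEhrhart}, $f_q(m)/\Vol(P)^q = f_q(m)$ is a two-level quasi-polynomial of degree $dq = q$ and infinite depth.

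Finally, $f_q(m)$ counts tuples with coordinates in $\{1, \ldots, m-1\}$, so the number of ordered Sidon sets of length at most $n$, translated to lie in $\{1, \ldots, n+1\}$, is $f_q(n+2)$. Since $h(n) = n + 2$ has degree 1 and leading coefficient 1, Proposition~\ref{prop:evaluation} ensures that $f_q(n+2)$ is again a two-level quasi-polynomial of degree $q$ and infinite depth, completing the proof. I expect the main obstacle to be carrying out the case analysis correctly; in particular, one must not overlook the three-variable hyperplanes $2x_i = x_j + x_k$, which come from the Sidon relation $s_j - s_i = s_i - s_k$ and are easy to miss if one focuses only on the generic case of four distinct indices.
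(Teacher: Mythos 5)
Your proposal is correct and follows essentially the same route as the paper: reduce to counting interior lattice points of $\left((n+2)P^q\right)^\circ$ avoiding the symmetric linear arrangement generated from a fixed arrangement in $D^4$, then apply Theorem~\ref{thm:allEhrhart} and Proposition~\ref{prop:evaluation}. The only cosmetic difference is that you phrase the Sidon condition via distinct pairwise sums while the paper uses distinct pairwise differences; the resulting hyperplane families ($x_i = x_j$, $2x_i = x_j + x_k$, and $x_i + x_j = x_k + x_\ell$, the last being a rewriting of $x_i - x_k = x_\ell - x_j$) coincide.
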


\begin{proof} We take $d=1$ so $D = \RR$ and $P$ is just the unit interval. For given $q$ and $n$, the modified counting problem reduces to counting lattice points in $\left((n+2)P^q\right)^\circ$ that avoid the arrangement $\calH_q$ of hyperplanes of the forms $x_i = x_j$, $x_i + x_j = 2x_k$, and $x_i - x_j = x_k - x_\ell$ for distinct indices $i,j,k,\ell$. All of these hyperplanes are linear, and they are obtained as prescribed in Theorem \ref{thm:allEhrhart} from the arrangement in $D^4$ given by 
    \[ \calH = \{ x_1 = x_2; \; x_1 + x_3 = 2x_2; \; x_1 - x_4 = x_2 - x_3 \}. \]
As in the proof of Corollary \ref{cor:queens}, the result follows immediately from Theorem \ref{thm:allEhrhart} and Proposition \ref{prop:evaluation}.
\end{proof}

\begin{remark}
For fixed $q$, $f_q(n)$ is a quasi-polynomial, and therefore it can be evaluated at negative integers.
Beyond examining the above $f_q(n)$ for Sidon sets, Beck, Bogart, and Pham \cite{BBP} further analyze $(-1)^{g(q)}f_q(-n)$, which also has a combinatorial interpretation. In fact, a main aim in the study of inside-outside polytopes is to understand \emph{reciprocity}, that is, combinatorial interpretations of functions such as $(-1)^{g(q)}f_q(-n)$. We note that if $f_q(n)$ is a two level polynomial, then $(-1)^{g(q)}f_q(-n)$ is as well, by Proposition~\ref{prop:evaluation}.
\end{remark}

\begin{remark} In this application as well as the previous one, all of the hyperplanes in $\calH_q$ meet at the point $(1/2, \dots, 1/2)$ in the interior of $P^q$. This is not critical for our applications (though it does imply $\overline{\calL_q} = \calL_q$ in the proof of Theorem~\ref{thm:allEhrhart}), but it is important for understanding reciprocity because it yields transverse intersection in the sense of \cite{BeZa}.
\end{remark}

This technique can be adapted to many examples with linear constraints that have rich families of group actions that do not necessarily satisfy all of the hypotheses of Theorem \ref{thm:allEhrhart}. Before we see one such example, a non-example is worth pointing out.

\begin{example} The number of \emph{anti-magic} $q\times q$ squares with entries in $[n]$ (where row, column, and possibly diagonal sums are required to be distinct)  is a classic application of inside-out polytopes, but writing them as two-level polynomials seems impossible: even the leading coefficient is a hard-to-compute function, because it is the volume of the Birkhoff polytope in dimension $q$ \cite{BP}. The key difference here is that although for each $q$ the hyperplane arrangement $\calH_q$ is invariant under permutation, there is no single hyperplane arrangement $\calH$ that generates $\calH_q$ for every $q$ by reindexing copies of $D$, as required by Theorem \ref{thm:allEhrhart}. In particular, the number of variables involved in each hyperplane is not bounded by a constant $M$ independent of $q$, so (S4) does not hold. 
\end{example}

\subsection{Subspace arrangements and black and white queens}
To end this section, we apply the schema to prove a new generalization of Corollary \ref{cor:queens} that requires working with subspace arrangements and a somewhat weaker symmetry hypothesis.

\begin{theorem} \label{thm:whiteblackqueens}
  The number of placements of $q$ labeled white queens and $q$ labeled black queens on an $n \times n$ board so that no pair of queens of \emph{opposite} color attack each other is a two-level quasi-polynomial.
\end{theorem}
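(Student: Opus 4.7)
The plan is to adapt Theorem~\ref{thm:allEhrhart} to accommodate the weaker symmetry group $S_q \times S_q$ acting on distinguishable white and black copies of $D$. I take $d = 2$, $D = \RR^2$, $P = [0,1]^2$, and work in $D^{2q}$ with coordinates $(x_i^w, y_i^w)$ and $(x_j^b, y_j^b)$ for $i, j \in [q]$, representing the positions of the $q$ labeled white and $q$ labeled black queens. Let $\calH_q$ be the hyperplane arrangement consisting of all hyperplanes of the forms $x_i^w = x_j^b$, $y_i^w = y_j^b$, $x_i^w + y_i^w = x_j^b + y_j^b$, and $x_i^w - y_i^w = x_j^b - y_j^b$ for $i,j \in [q]$; these are generated from a fixed arrangement $\calH$ in $D^2$ by independently reindexing the ``white'' and ``black'' copy. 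Just as in Corollary~\ref{cor:queens}, the number of lattice points in $(nP^{2q})^\circ$ avoiding $\calH_q$ represents placements on an $(n-1) \times (n-1)$ board, and Proposition~\ref{prop:evaluation} will transfer the count to an $n \times n$ board.

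Next, I would apply M\"obius inversion on the sublattice $\overline{\calL_q}$ of flats of $\calH_q$ meeting $(P^{2q})^\circ$ to write $f_q(n) = \sum_{F \in \overline{\calL_q}} \mu([\hat 0, F]) \, E_{F,q}(n)$. For each flat $F$, let $A_w, A_b \subseteq [q]$ be the subsets of white and black queen indices whose coordinates appear in some defining equation of $F$, so that $F = F' \times D^{[q] \setminus A_w} \times D^{[q] \setminus A_b}$ for a unique crux $F' \subseteq D^{A_w} \times D^{A_b}$. Define the foundation of $F$ to be the ordered pair $(A_w, A_b)$, of size $|A_w| + |A_b|$, and take foundation isomorphisms to be pairs of bijections $A_w \to A_w'$ and $A_b \to A_b'$ that do not mix the two colors. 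There is then one isomorphism class per pair $(a_w, a_b) \in \NN^2$, and the number of foundation isomorphisms from $([a_w], [a_b])$ into $([q], [q])$ is $(q)_{a_w}(q)_{a_b}$, a polynomial in $q$ for every $q \in \NN$.

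With these definitions in hand, I would verify (S1)--(S9) in direct parallel with the proof of Theorem~\ref{thm:allEhrhart}. The Ehrhart function factors as
\[ E_{F,q}(n) = E_{(P^{A_w} \times P^{A_b}) \cap F'}(n) \cdot E_P(n)^{2q-a_w-a_b}, \]
which, since $\Vol(P) = 1$, is a two-level quasi-polynomial of infinite depth by Corollary~\ref{cor:powers} and Proposition~\ref{prop:products}, establishing (S2). Each hyperplane in $\calH$ involves exactly two queens (one of each color), so $M = 2$ bounds the number of coordinates restricted by any single hyperplane, controlling the crux size in terms of the codegree and establishing (S4). Properties (S5)--(S9) then reduce to routine counting of pairs of subsets with their independent bijective symmetry, with depth function identically $\infty$.

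The main obstacle is recognizing that Theorem~\ref{thm:allEhrhart} does not essentially use the full $S_q$-symmetry, only the product structure of the permutation group on independent copies of $D$ together with the polynomiality in $q$ of the number of foundation isomorphisms. Upgrading foundations from single subsets to ordered pairs of subsets preserves both features, so the schema goes through without substantive change; Proposition~\ref{prop:evaluation} then absorbs the shift from $(n-1) \times (n-1)$ to $n \times n$ boards.
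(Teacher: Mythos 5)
Your overall strategy --- two-colored reindexing from a fixed arrangement, foundations as ordered pairs of subsets with $(q)_{a_w}(q)_{a_b}$ foundation isomorphisms, and Proposition~\ref{prop:evaluation} to shift boards --- is exactly the paper's, but there is a genuine gap: you have omitted the constraints forbidding two \emph{same}-colored queens from occupying the same square. A placement of chess pieces puts them on distinct squares, so one must also exclude the loci $(x_i^w,y_i^w)=(x_j^w,y_j^w)$ and $(x_i^b,y_i^b)=(x_j^b,y_j^b)$ for distinct $i,j$. In the one-color queens problem (Corollary~\ref{cor:queens}) this is automatic, since coinciding queens attack each other along every line; here it is not, because same-colored queens impose no attack constraints at all. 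With only your opposite-color hyperplanes, you are counting configurations in which arbitrarily many white queens may stack on one square, which is not the quantity in the theorem statement.

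Fixing this is not cosmetic: the excluded loci are codimension-two \emph{subspaces}, so $\calH_q$ becomes a subspace arrangement and Theorem~\ref{thm:allEhrhart} (stated for hyperplane arrangements) no longer applies off the shelf. One must instead run the schema directly on the semilattice of flats of the subspace arrangement as in \cite[\S 8]{BeZa}; the M\"obius inversion formula (S1) still holds even though the semilattice is not graded, and the codegree bound in (S4) changes slightly --- a flat contributing codegree $r$ is an intersection of at most $r$ elements of $\calH_q$, each involving two queens, giving crux size at most $2r$. The rest of your argument (S5)--(S9), including the pair-of-subsets foundations and their counting, then goes through as you describe. So the two-colored symmetry adaptation you identified as ``the main obstacle'' is correct and matches the paper, but the actual main obstacle --- the need for subspace rather than hyperplane arrangements --- is missing from your proposal.
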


\begin{proof} Let $(x_i, y_i)$ and $(z_i, w_i)$ be the respective positions of the $i$-th white queen and the $i$-th black queen. Taking $P$ to be the unit cube in $\RR^4$, such placements are indexed by the number $f_q(n+1)$ of lattice points in $\left((n+1)P^q\right)^\circ$ that avoid the hyperplanes $x_i = z_j$, $y_i = w_j$, $x_i+y_i = z_j + w_j$, and $x_i - y_i = z_j - w_j$ for all pairs of indices $(i,j)$ (not necessarily distinct), and also the \emph{subspaces} of codimension two given by $(x_i, y_i) = (x_j, y_j)$ or by $(z_i, w_i) = (z_j, w_j)$ for all pairs of distinct indices $(i,j)$. These subspaces are required to rule out two white queens or two black queens coinciding. (In the original queens problem such subspaces were not necessary because queens that coincide also attack each other in every possible direction.)

As well as involving a subspace arrangement $\calH_q$ rather than a hyperplane arrangement, this problem fails to satisfy the hypothesis in Theorem \ref{thm:allEhrhart} that each $\calH_q$ is obtained from a fixed arrangement $\calH$ in a space $D^b$ via all relabelings of copies of $D$. However, it suffices to establish an appropriate two-colored analogue of this hypothesis. Let $W$ be a copy of $\RR^2$ indexed by the coordinates $(x,y)$ of a white queen, and $B$ be another copy of $\RR^2$ indexed by $(z,w)$ for a black queen. Consider the subspace arrangement in $W^2 \times B^2$ given by $\calH =$ 
\[  \{x_1 = z_1, \, y_1 = w_1, \, x_1+y_1 = z_1+w_1, \, x_1 - y_1 = z_1 - w_1, \, (x_1, y_1) = (x_2, y_2), \, (z_1, w_1) = (z_2, w_2)\}.\] 
Then $\calH_q$ is obtained from $\calH$ as follows: for each pair of set injections $\sigma_1, \sigma_2: \{1,2\} \to [q]$, and each hyperplane or codimension two subspace in $\calH$, apply $\sigma_1$ to reindex the copies of $W$ and $\sigma_2$ to reindex the copies of $B$. In particular, $\calH_q$ is invariant under the action of $S_q \times S_q$ on $W^q \times B^q$ that separately permutes the copies of $W$ and of $B$. 

To show that $f_q(n+1)$ is a two-level quasi-polynomial of infinite depth, by Proposition \ref{prop:evaluation} it suffices to show that $f_q(n)$ is, which we do via the schema. As in \cite[\S8]{BeZa}, we work with the semilattice $\calL_q$ of flats of the subspace arrangement $\calH_q$. As in Corollary \ref{cor:queens}, all of the elements (hyperplanes and codimension-two subspaces) of $\calH_q$ meet at a common point in $(P^q)^\circ$, so $\overline{\calL_q} = \calL_q$. Although this semilattice is not graded, we still have for each $q$ that 
\[ f_q(n) = \sum_{F \in \calL_q} \mu([\hat{0},F])E_F(q;n) \]
which is a quasi-polynomial by Ehrhart's theorem, proving (S1).

Given a flat $F \in \calL_q$, let $A_1$ and $A_2$ respectively be the sets of white queens and of black queens constrained in $F$. Then we can write
\[ F = F' \times W^{[q] \setminus A_1} \times B^{[q] \setminus A_2} \]
and define the foundation to be the pair of sets ($A_1, A_2$) and the crux to be the subspace $F' \subseteq W^{A_1} \times B^{A_2}$. Foundation maps are pairs of set injections $(\sigma_1, \sigma_2)$ where $\sigma_i$ maps $A_i$ to $[q]$, and crux isomorphisms are the vector space isomorphisms induced by relabelling copies of $W$ according to $\sigma_1$ and the copies of $B$ according to $\sigma_2$, where $(\sigma_1, \sigma_2)$ is a foundation isomorphisms. The size of the foundation and of the crux are both taken to be $\abs{A_1} + \abs{A_2}$.

Now (S2) and (S3) follow just as in Theorem \ref{thm:allEhrhart}, using the symmetry of $\calH_q$. For (S4), if $F$ is a flat contributing a term of codegree $r$, then $F$ has codimension $r$ in $W^q \times B^q$. This implies that it is the intersection of \emph{at most} $r$ subspaces in $\calH_q$, and each subspace involves two queens, so the crux size is at most $2r$, proving (S4). The properties (S5) and (S7) are immediate from the direct relationship between crux and foundation. For a given foundation size $u$, there are exactly $u+1$ isomorphism classes given by the size of $A_1$ (between 0 and $u$), proving (S6). For the same reason, if $T$ is a type of flat of foundation size $u$, then each of the sets $A_1$, $A_2$ is of size at most $u$. Thus via pairs of set bijections, there is a flat of type $T$ in the fixed lattice $\overline{\calL_u}$, so there can be only finitely many such types, proving (S8). If $\abs{A_i} = a_i$ for $i = 1,2$, then there are $(q)_{a_1}(q)_{a_2}$ foundation isomorphisms to subsets of $[q] \times [q]$, proving (S9) with infinite depth.  
\end{proof}

\begin{remark} Chaiken, Hanusa, and Zaslavsky \cite[\S 7.6]{CHZ} observe that subspace arrangements would also be necessary to analyze higher-dimensional analogues of the queens problem. Such analogues could be formulated and proved in a manner similar to Theorem \ref{thm:whiteblackqueens}, but we agree with them that the resulting formulas would be much more complex. 
\end{remark}

\section{Chromatic polynomials with finite depth}
\label{sect:finite}

In Section~\ref{sect:chromatic}, we examined families of graphs $G_q$ that had high degrees of symmetry. For example, $S_q$ acts on $\Kn(q,k)$, or $S_q^k$ acts on $K_q^k$. These symmetries led to two-level chromatic polynomials whose depth was \emph{infinite}. In particular, the number of foundation isomorphisms were given by the expressions $(q)_a$ or $(q)_{a_1} (q)_{a_2} \cdots (q)_{a_k}$, which are valid even when $q<a$ or $q<a_i$ for some $i$, respectively.

Now we examine the families $P_q^k$ (the Cartesian product of $k$ copies of the path with $q$ vertices), and $C_q^k$ (the Cartesian product of $k$ copies of the cycle with $q$ vertices) with many fewer symmetries. This requires foundations that are more complicated, and the count (S9) of the number of foundation isomorphisms will be invalid for small $q$, leading to finite depth in the two-level polynomials.

As in Section~\ref{sect:chromatic}, we define objects to be bonds and define cruxes to be the union of all of the components of size greater than one, which immediately gives us Equation~(\ref{eq:Moebiusonbonds}), Proposition~\ref{prop:allbonds}, and (S1) through (S4) of the schema. For the foundations, we will use the \emph{projection tuple} of the crux $C$, which we define now.

\begin{definition}
\label{def:proj}
Given a Cartesian product $G=G_1\times\cdots\times G_k$, a subgraph $H$ of $G$, and $i$ with $1\le i\le k$, define $\pi_i(H)$ to be the subgraph of $G_i$ with
\begin{itemize}
\item vertices $v\in G_i$ such that $v$ is the $i$-th coordinate of a vertex in $H$, and
\item edges $v\!\sim\!v'$ where $v \neq v'$ are the $i$-th coordinates of vertices connected by an edge in $H$ that is derived from an edge in $G_i$.
\end{itemize}
Define the \emph{projection tuple} of $H$ to be the ordered tuple $\big(\pi_1(H),\ldots,\pi_k(H)\big)$.
\end{definition}

We use the projection tuple for the foundation, and define the size of the foundation to be $\sum_{i=1}^k\abs{V(\pi_i(C))}$. Since $\abs{V(\pi_i(C))}\le \abs{V(C)}$,
 the size of the foundation is at most $k$ times the size of the crux, establishing (S5). The remainder of the schema, leading to the count in (S9), depends on peculiarities of the graphs, so we now attack $P_q^k$ and $C_q^k$, in turn. For $P_q^k$, we end up with the finite depth function $q$.

\begin{theorem}
Fix $k$, and let $G_q = P_q^k$. Then $\chi_{G_q}(n)$ is a two-level polynomial of degree function $q^k$ and depth function $q$.
\end{theorem}

\begin{proof}
Given a crux (bond) $H$, let $H_i=\pi_i(H)$, and let the foundation be the projection tuple $(H_1,\ldots,H_k)$, as in Definition~\ref{def:proj}. Then  $H_i$ is a disjoint union of paths (including possibly isolated vertices), that is,
\[H_i \cong P_{c_{i1}}\sqcup \cdots \sqcup P_{c_{is_i}}.\] 
Let $m_i=\sum_j c_{ij} = \abs{V(H_i)}$, so that $\sum_i m_i$ is the foundation size. Isomorphisms of foundations will be tuples $(\sigma_1,\ldots,\sigma_k)$, where $\sigma_i$ is an \emph{order-preserving} embedding of $H_i$ into $P_q$ (that is, the paths $P_{c_{ij}}$ may be placed anywhere in $P_q$ such that they remain in the same order and don't overlap). For a fixed foundation size $M$, there are only a finite number of ways to write $M=\sum_i m_i$, and only a finite number of ways to partition $m_i$ vertices into disjoint paths, so the number of foundation isomorphism classes of  size $M$ is finite (S6). Isomorphisms of foundations induce isomorphisms of cruxes with the appropriate properties (S7) by sending the vertex $(v_1,\ldots,v_k)$ to $\big(\sigma_1(v_1),\ldots,\sigma_k(v_k)\big)$. Any crux mapping to a foundation $(H_1,\ldots,H_k)$ must be a subgraph of $H_1\times\cdots\times H_k$ (with additional properties), and so there are a finite number of crux types for each foundation class (S8).

Lastly, we calculate the number of foundation isomorphisms with domain $(H_1,\ldots,H_k)$ as a function of $q$. The number of isomorphic copies of $H_i$ in $P_q$ is 
 $\binom{q-m_i+s_i}{s_i}$, since we have $q-m_i$ vertices of $P_q$ not included, and so $q-m_i+s_i$ slots to place the $P_{c_{ij}}$'s. Therefore the total number of isomorphisms with domain the given foundation is
\begin{equation}
\label{eqn:PqxPq}
\prod_{i=1}^k\binom{q-m_i+s_i}{s_i} = \frac{1}{s_1!\cdots s_k!}\prod_{i=1}^k (q-m_i+s_i)_{s_i},
\end{equation}
and these are polynomials valid when $q\ge \max_i (m_i-s_i)$.

Careful depth calculations will now establish (S9). Note that adding an edge to the crux increases the codegree of its contribution by one if it does not complete a cycle or by zero if it does complete a cycle, and so the number of edges in a forest contained in the crux is a lower bound on the codegree. If we examine a subgraph consisting of one edge in the crux projecting to each edge in each $H_i$, we see that this subgraph has no cycles (the projection of a cycle onto a path has to repeat edges). In particular, since $H_i$ has $m_i-s_i$ edges, the codegree $r$ of this term is at least
\[\sum_{i=1}^k m_i-s_i\ge \max_i(m_i-s_i),\] meaning that if $e(q):=q\ge r$, the count (\ref{eqn:PqxPq}) is a valid polynomial. This completes (S9) and the proof that it is a two-level polynomial of depth function $q$. Importantly, these polynomials are generally \emph{not} valid for $0\le q<\max_i (m_i-s_i)$, because they may not evaluate to zero in this range; hence the finite depth.
\end{proof}

We similarly analyze $C_q^k$, for which we get slightly smaller depth: $q-2$ rather than $q$.

\begin{theorem}
Fix $k$, and let $G_q = C_q^k$. Then $\chi_{G_q}(n)$ is a two-level polynomial of degree function $q^k$ and depth function $q-2$.
\end{theorem}

\begin{proof}
We follow the proof for $P_q^k$. The only difference is that there are two cases for $H_i=\pi_i(H)$:
\begin{itemize}
\item $H_i$ is a disjoint union of paths, isomorphic to $P_{c_{i1}}\sqcup \cdots \sqcup P_{c_{is_i}},$ or
\item $H_i=C_m$, for some $m$.
\end{itemize}

In the first case, we have a slightly different count for the number of foundation isomorphisms: because of the rotational symmetry of $C_q$: the position of $P_{c_{i1}}$ can be chosen in $q$ ways, and then the remaining positions can be chosen in
\[\binom{q-(m_i-c_{i1})+(s_i-1)}{s_i-1}\]
ways. This is a polynomial valid when
\[q\ge (m_i-c_{i1})-(s_i-1)=(m_i-s_i)-(c_{i1}-1),\]
so in particular, if $e(q):=q-2\ge r$, then 
\[q\ge r+2\ge (m_i-s_i)+2 \ge (m_i-s_i)-(c_{i1}-1),\]
and so the polynomial is valid, establishing (S9) in this case.

In the second case, the number of isomorphisms is $q\cdot\delta_{mq}$, which is not a polynomial in $q$. But the smallest codegree that such a term could contribute is $q-1$ (each edge in $C_q$ increases the codegree by one except for the last one, which completes the cycle), meaning our two-level polynomial will still have depth function $q-2$.
\end{proof}

\begin{remark}
\label{rmk:Biggs}
From limited computer experiments, $\chi_{P_q\times P_q}(n)$ seems to have depth significantly greater than $q$. In contrast, note that $q-2$ is exactly the depth function of
\[\chi_{C_q}(n)=(n-1)^q+(-1)^q(n-1).\]
In fact, $\{K_k\times C_q\}_{k\ge 1}$ form an infinite family of graphs for which $\chi_{K_k\times C_q}(n)$ is a two-level polynomial of degree function $kq$ but still depth function only $q-2$. This is a consequence of \cite[Section 5]{Bi}, which shows that
\[ \chi_{K_k\times C_q}(n)=c_0(n)^q + (n-1)\big(-c_1(n)+(k-1)c_2(n)\big)^q+\text{lower degree terms},\]
where
\[c_i(n)=\sum_{r=0}^{k-i}(-1)^r\binom{k-i}{r}(n-r-i)_{k-r-i}\]
is a polynomial in $n$ of degree $k-1$. In particular, $c_0(n)^q$ is a two-level polynomial of degree function $kq$ and infinite depth, and the next term $(n-1)\big(-c_1(n)+(k-1)c_2(n)\big)^q$ has degree function $1+(k-1)q$, that is, it contributes in codegree $q-1$. In other words, $c_0(n)^q$ is the only contribution to $\chi_{K_k\times C_q}(n)$ of codegree at most $q-2$, and therefore $\chi_{K_k\times C_q}(n)$ has depth function $q-2$.
\end{remark}

\begin{remark}
  Many similar graphs could be proved to have two-level chromatic polynomials of depth function $q$ or $q-2$: $P_q^{k_1}\times C_q^{k_2}\times K_q^{k_3}\times \Kn(q,k_5)^{k_4}\times H$ where $k_j$ are constants and $H$ is a fixed graph, and so forth. Similarly, the other graph polynomials discussed in Section~\ref{sect:chromatic} (characteristic polynomials and matching polynomials) have similar results, as do other products (tensor products, strong products). We leave the reader to fill in the details.
\end{remark}

\section{Two-level polynomials via generating functions}
\label{sect:gfs}
Two-level polynomials occur throughout enumerative combinatorics, and they can sometimes be identified directly from the form of their generating functions. We provide two-examples here, and it would be nice to have a more general understanding of how particular types of generating functions produce two-level polynomials. We note that even our two examples here appear radically different, though: the first is an exponential generating function summed over $q$, as a function of $n$, and the second is an ordinary generating function summed over $n$, as a function of $q$.

\subsection{Sheffer sequences}
Sheffer sequences include binomial type sequences and Appell sequences (some example being falling factorials, Bernoulli polynomials, Hermite polynomials, and Touchard polynomials), and can be defined via generating functions \cite{RKO}, as follows. Let $a(t)=a_0+a_1t+a_2t^2+\cdots$ and $b(t)=b_0+b_1t+b_2t^2+\cdots$ be formal power series over $\QQ$ (or any field of characteristic zero), such that $a_0\ne 0$, $b_0=0$, $b_1\ne 0$. Define the Sheffer sequence $f_q(n)$ via its generating function
\[F(n,t):=\sum_{q\ge 0} \frac{f_q(n)}{q!}t^q = a(t)e^{nb(t)}.\]

\begin{theorem} As defined above, $f_q(n)/b_1^q$ is a two-level polynomial of degree function $q$ and infinite depth.
\end{theorem}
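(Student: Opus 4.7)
The plan is to expand the generating function explicitly, extract a formula for the codegree $r$ coefficient of $f_q(n)/b_1^q$, and show that each such coefficient agrees with a polynomial in $q$ that automatically vanishes for $q<r$, yielding infinite depth. First I would factor $b(t)=b_1\,t\,h(t)$ with $h(t)=1+c_1 t+c_2 t^2+\cdots$, where $c_i:=b_{i+1}/b_1$ (using $b_0=0$ and $b_1\ne 0$). Then
\[
a(t)\,e^{nb(t)}=\sum_{j\ge 0}\frac{(nb_1)^j\,t^j}{j!}\,a(t)\,h(t)^j,
\]
and extracting $f_q(n)=q!\,[t^q]\bigl(a(t)e^{nb(t)}\bigr)$ and reindexing by $r:=q-j$ gives
\[
\frac{f_q(n)}{b_1^q}=\sum_{r=0}^{q}(q)_r\,b_1^{-r}\,[t^r]\bigl(a(t)\,h(t)^{q-r}\bigr)\,n^{q-r}.
\]
Thus the codegree-$r$ coefficient is $c_r(q)=(q)_r\,b_1^{-r}\,[t^r]\bigl(a(t)\,h(t)^{q-r}\bigr)$ when $r\le q$, and vanishes otherwise.

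The main step is to show that $[t^r]\bigl(a(t)h(t)^{q-r}\bigr)$ agrees with a polynomial in $q$ for every integer $q\ge r$. Splitting by powers of $a(t)$ via $[t^r]\bigl(a(t)h(t)^{q-r}\bigr)=\sum_{k=0}^{r}a_k\,[t^{r-k}]\bigl(h(t)^{q-r}\bigr)$ reduces this to proving the same for each $[t^s]\bigl(h(t)^M\bigr)$ with $s\le r$ and $M:=q-r$. The multinomial theorem writes
\[
[t^s]\bigl(h(t)^M\bigr)=\sum_{\substack{(k_1,k_2,\ldots)\\ k_1+2k_2+\cdots=s}}\frac{(M)_{k_1+k_2+\cdots}}{k_1!\,k_2!\cdots}\,c_1^{k_1}c_2^{k_2}\cdots,
\]
which is a finite sum (only finitely many compositions of $s$ contribute) whose only $M$-dependence is a falling factorial $(M)_{k_1+k_2+\cdots}$ of degree at most $s$ in $M$. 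Substituting $M=q-r$ yields a polynomial in $q$, and multiplying by $(q)_r\,b_1^{-r}$ produces the required polynomial $\phi_r(q)$ agreeing with $c_r(q)$ for all $q\ge r$.

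Infinite depth is then essentially free: the factor $(q)_r$ in $\phi_r(q)$ vanishes on $\{0,1,\ldots,r-1\}$, matching the actual value $c_r(q)=0$ in that range (since $f_q$ has degree $q<r$ there). The only real obstacle will be keeping the bookkeeping straight through the factorization $b(t)=b_1 t\,h(t)$ and the reindexing; the key conceptual point is that multinomial coefficients $\binom{M}{k_0,k_1,\ldots}$ with fixed $(k_1,k_2,\ldots)$ are polynomial in the top entry $M$, which is precisely what makes the $(q-r)$-th power of $h(t)$ interact polynomially with $q$ and forces $b_1^q$ to be the sole source of exponential growth that must be normalized away.
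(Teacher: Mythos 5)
Your proposal is correct and follows essentially the same route as the paper: both extract the codegree-$r$ coefficient as $(q)_r$ times the $t^r$-coefficient of $a(t)b(t)^{q-r}$ (suitably normalized), and both show that this remaining factor is polynomial in $q$ by a finite sum over multi-indices whose multiplicities are counted by falling factorials of $q-r$, with the factor $(q)_r$ supplying the vanishing for $q<r$ that gives infinite depth. Your use of the multinomial theorem on $h(t)^{q-r}$ with $b(t)=b_1 t\,h(t)$ is just a cleaner packaging of the paper's count of compositions by type.
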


\begin{proof}
Expanding $f_q(n)=\sum_{k\ge 0}c_k(q)n^k$ as a formal power series (for $c_k(q)\in\QQ$), and writing both expressions for $F(n,t)$ as power series in $n$, we have
\[\sum_{k\ge 0}\left(\sum_{q\ge 0}\frac{c_k(q)t^q}{q!}\right)n^k = \sum_{k\ge 0}\frac{a(t)\big(b(t)\big)^k}{k!}n^k.\]
Equating powers of $n$ and examining the $t^q$ coefficient, we have
\[c_k(q) = q! \left\{\frac{ab^k}{k!}\right\}_q,\]
where $\{g\}_q$ means the coefficient of $t^q$ in $g$. Since $a_0\ne 0$, $b_0=0$, $b_1\ne 0$, we see that $c_q(q)\ne 0$ and $c_k(q)=0$ for $k>q$, that is, $f_q(n)$ is a polynomial of degree function $q$.

We want to show that for each codegree $r$, $c_{q-r}(q)/b_1^q$ is a polynomial in $q$, for all $q\ge 0$. Indeed
\[c_{q-r}(q)=q! \left\{\frac{ab^{q-r}}{(q-r)!}\right\}_q = (q)_r\left\{ab^{q-r}\right\}_q,\]
and note that the polynomial $(q)_r$ evaluates to zero if $q<r$ (as desired for infinite depth). Now $\left\{ab^{q-r}\right\}_q$ is the sum, over all compositions
\[q=c+(d_1+1)+\cdots+(d_{q-r}+1)\]
with $c,d_i$ nonnegative integers, of $a_cb_{d_1+1}\cdots b_{d_{q-r}+1}$. Since $c+d_1+\cdots+d_{q-r}=r$, a bounded number of these $d_i$ are positive; call them $\vec e=(e_1,\ldots,e_m)$. Inspired by our schema, our objects are the compositions, and define both the cruxes and foundations to be $(c,\vec e)$. Then the \emph{types} are indexed by $T=\big(c,\text{ multiset}\{e_i\}\big)$, and there are a fixed number of types for a given $r$.

Each composition of  type $T$ contributes
\[a_cb_{e_1+1}\cdots b_{e_m+1}\cdot b_1^{q-r-m} = \frac{a_cb_{e_1+1}\cdots b_{e_m+1}}{b_1^{r+m}}\cdot b_1^q = h_T \cdot b_1^q,\]
where $h_T$ is a constant. The number of compositions of type $T$ is the number of ways to order $e_1,\ldots,e_m$ amidst $q-r-m$ 1's (corresponding to $d_i=1$ in the composition of $q$), which, provided $r\le q$, is
\[s_T(q):=\frac{(q-r)_m}{\lambda_1!\cdots\lambda_\ell!},\]
where $\lambda_j$ are the multiplicities in the multiset $\{e_i\}$.

Putting this all together, we see that
\[\frac{c_{q-r}(q)}{b_1^q} = (q)_r \left\{\frac{ab^k}{k!}\right\}_q\cdot\frac{1}{b_1^q}=(q)_r\sum_{T} s_T(q)h_T,\]
which is a polynomial! Note that the polynomial $s_T(q)$ is invalid in this count for $0\le q<r$, but in that case it is cancelled by $(q)_r=0$.
\end{proof}

\subsection{Partitions of $n$ into $q$ parts}
\begin{theorem}
Let $f_q(n)$ be the number of partitions of $n$ into $q$ parts. Then $q!(q-1)!f_q(n)$ is a two-level quasi-polynomial of degree function $q-1$, depth function $\floor{(q-1)/2}$, and period function one. 
\end{theorem}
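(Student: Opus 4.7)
The plan is to analyze the generating function
\[ G_q(x) = \prod_{j=1}^q \frac{1}{1-x^j} = \sum_{m \geq 0} f_q(m+q)\, x^m \]
via partial fraction decomposition and to show that the top $\lfloor(q-1)/2\rfloor + 1$ coefficients of $f_q(n)$ come only from the pole at $x=1$. The function $G_q(x)$ has a pole of order $q$ at $x=1$ and, for each $d \geq 2$, poles of order at most $\lfloor q/d \rfloor \leq \lfloor q/2 \rfloor$ at primitive $d$-th roots of unity. Writing
\[ G_q(x) = \sum_{k=1}^q \frac{c_k}{(1-x)^k} + R_q(x), \]
where $R_q(x)$ collects all other principal parts, and using $[x^m](1-x)^{-k} = \binom{m+k-1}{k-1}$, the coefficient $[x^m] R_q(x)$ is a quasi-polynomial in $m$ of degree at most $\lfloor q/2 \rfloor - 1$. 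Substituting $m = n - q$ and collecting by degree in $n$, the coefficient of $n^{q-1-r}$ in $f_q(n)$ receives no contribution from $R_q$ whenever $q-1-r > \lfloor q/2 \rfloor - 1$, i.e., for $r \leq \lfloor(q-1)/2\rfloor$. For such $r$, the coefficient is a constant in $n$, which is the period-one claim.

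The crux is to show that $q! c_{q-s}$ is a polynomial in $q$ for each fixed $s \geq 0$. Setting $y = 1 - x$, we have
\[ (1-x)^q G_q(x) = \prod_{j=1}^q \frac{1-x}{1-x^j} = \prod_{j=1}^q \frac{1}{1+x+\cdots+x^{j-1}}, \]
and the hockey-stick identity gives
\[ 1+x+\cdots+x^{j-1} = \sum_{k=0}^{j-1}(-1)^k\binom{j}{k+1}y^k = j\bigl(1 + \beta_j(y)\bigr), \]
where each coefficient of $\beta_j(y)$ is a polynomial in $j$ (since $\binom{j}{k+1}/j$ is). Taking logarithms,
\[ \log\bigl(q!(1-x)^q G_q(x)\bigr) = -\sum_{j=1}^q \log\bigl(1+\beta_j(y)\bigr) = \sum_{r \geq 1}\!\left(\sum_{j=1}^q \alpha_r(j)\right)\! y^r, \]
where each $\alpha_r(j)$ is a polynomial in $j$. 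By Faulhaber's formula, $B_r(q) := \sum_{j=1}^q \alpha_r(j)$ is a polynomial in $q$ for each fixed $r$. Exponentiating, the coefficient of $y^s$ in $q!(1-x)^q G_q(x) = \exp\bigl(\sum_r B_r(q) y^r\bigr)$ is a polynomial in $B_1(q), \dots, B_s(q)$, hence a polynomial in $q$; and this coefficient is exactly $q! c_{q-s}$.

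To finish, I compute the codegree $r$ coefficient (for $r \leq \lfloor(q-1)/2\rfloor$) using only the main pole. Expanding
\[ \binom{n-s-1}{q-s-1} = \frac{1}{(q-s-1)!}\prod_{i=s+1}^{q-1}(n-i), \]
the coefficient of $n^{q-1-r}$ is $(-1)^{r-s} e_{r-s}(s+1, s+2, \ldots, q-1)/(q-s-1)!$, where $e_{r-s}$ is an elementary symmetric polynomial. Since the power sums $\sum_{j=s+1}^{q-1} j^\ell$ are polynomial in $q$ by Faulhaber, Newton's identities imply $e_{r-s}(s+1, \ldots, q-1)$ is polynomial in $q$ for fixed $r,s$. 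Combining everything,
\[ q!(q-1)!\, [n^{q-1-r}]f_q(n) = \sum_{s=0}^r (q! c_{q-s}) \cdot (-1)^{r-s}(q-1)_s \cdot e_{r-s}(s+1, \ldots, q-1), \]
a polynomial in $q$. The main obstacle is the second step: tracking polynomial-in-$j$ dependence through the $\log$/$\exp$ manipulation and applying Faulhaber's formula cleanly at the summation over $j$ to produce a single polynomial in $q$ valid for all sufficiently large $q$, and then verifying that the formula also vanishes (or is consistent) for small $q$ where the identity $m = n-q$ might degenerate.
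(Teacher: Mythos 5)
Your proposal is correct, and while its first half coincides with the paper's, the key technical step is handled by a genuinely different argument. Both proofs begin identically: write the generating function, do partial fractions over roots of unity, observe that the pole at $x=1$ has order $q$ while every other pole has order at most $\floor{q/2}$, and conclude that the top $\floor{(q-1)/2}+1$ coefficients of $f_q(n)$ come from the polynomial part alone (whence period one and depth $\floor{(q-1)/2}$). Where you diverge is in proving that the relevant coefficients are polynomial in $q$. The paper quotes the explicit Bernoulli-number formula for the polynomial part due to Beck--Gessel--Komatsu and Cimpoea\c{s}, and then reduces the polynomiality claim to the fact that monomial symmetric functions evaluated at $(1,2,\dots,q,0,0,\dots)$ are polynomial in $q$, which it gets from elementary symmetric functions and the rising-factorial example. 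You instead derive the partial-fraction coefficients $c_{q-s}$ from scratch: expanding $(1-x)^q G_q(x)=\prod_j\bigl(j(1+\beta_j(y))\bigr)^{-1}$ in $y=1-x$, taking logarithms so that the coefficient of $y^r$ becomes a sum $\sum_{j=1}^q\alpha_r(j)$ of a polynomial in $j$, applying Faulhaber's formula, and exponentiating back; you then assemble the codegree-$r$ coefficient using elementary symmetric polynomials of $s+1,\dots,q-1$ via Newton's identities. Your route is more self-contained (no external formula is cited, and the $\log/\exp$ manipulation is legitimate at the level of formal power series since only the coefficients of $y^1,\dots,y^r$ of each $\beta_j$, each polynomial in $j$, enter the coefficient of $y^r$), at the cost of a slightly longer computation; the paper's route is shorter given the cited formula but leans on the same symmetric-function machinery in the end. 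The worry you flag about small $q$ is not actually an obstacle: for codegree $r$ the definition only requires agreement with a polynomial when $r\le\floor{(q-1)/2}$, i.e.\ $q\ge 2r+1$, and your identities are valid there.
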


\begin{proof}
Beck, Gessel, and Komatsu \cite{BGK} examine a more generic question, Cimpoea\c{s} \cite{Cim} tackles this question, and both get us half-way there, as follows. Transposing the partition diagram for such a partition, we have a partition of $n$ with largest part exactly $q$. Now we can compute the generating function $\sum_{n=0}^\infty f_q(n)x^n$ 
\begin{align*} &= (1+x+x^2+\cdots)(1+x^2+x^4+\cdots)\cdots(1+x^{q-1}+x^{2(q-1)}+\cdots)(x^q+x^{2q}+\cdots)\\
&=\frac{x^q}{(1-x)(1-x^2)\cdots(1-x^q)}.
\end{align*}

Let $r_1=1, r_2=-1,r_3,\ldots,r_a$ be all of the $k$th roots of unity, over all $1\le k\le q$. The multiplicity $m_i$ of $r_i$ in the denominator of the generating function is the number of $k$ ($1\le k\le q)$ such that $r_i$ is a $k$th root of unity. In particular, $m_1=q$, $m_2=\floor{q/2}$, and $m_i\le m_2$ for $i\ge 3$. Expanding the generating function using partial fractions, we get that
\[f_q(n) = \sum_{i=1}^a P_{q,i}(n)r_i^n,\]
where $P_{q,i}(n)$ is a polynomial in $n$ of degree $m_i-1$. In particular, $f_q(n)$ is a quasi-polynomial of period $\lcm(1,\ldots,q)$ and degree function $m_1-1=q-1$, and the terms of codegree at most  
\[m_1-m_2-1=q-\floor{q/2}-1=\floor{(q-1)/2}\]
 are determined by $P_{q,1}(n)$. This $P_{q,1}(n)$ is called the \emph{polynomial part} of $p_q(n)$, and we will prove that $q!(q-1)!P_{q,1}(n)$ is a two-level polynomial of infinite depth, from which the proposition follows.
 
 Both \cite{BGK} and \cite{Cim} give the formula:
 \[P_{q,1}(n) = \frac{1}{q!}\sum_{u=0}^{q-1}\frac{(-1)^u}{(q-1-u)!}
\sum_{i_1+\cdots +i_q=u}\frac{B_{i_1}\cdots B_{i_q}}{i_1!\cdots i_q!}1^{i_1}\cdots q^{i_q}(n-q)^{q-1-u}, \]
where $B_i$ are the \emph{Bernoulli numbers}. Therefore
\[q!(q-1)!P_{q,1}(n) = \sum_{u=0}^{q-1}(-1)^u (q-1)_u (n-q)^{q-1-u}
\sum_{i_1+\cdots +i_q=u}\frac{B_{i_1}\cdots B_{i_q}}{i_1!\cdots i_q!}1^{i_1}\cdots q^{i_q}.\]
By our algebra of two-level polynomials in Section~\ref{sect:alg}, it suffices to prove that, for fixed $u$,
\[\sum_{i_1+\cdots +i_q=u}\frac{B_{i_1}\cdots B_{i_q}}{i_1!\cdots i_q!}1^{i_1}\cdots q^{i_q}\]
is a polynomial in $q$, valid for all $q\ge 1$. Since $B_0=1$, $0!=1$, and $k^0=1$, only the nonzero $i_j$'s matter in the sum. The nonzero $i_j$'s correspond (by reordering from smallest to largest) to some partition $\lambda$ of the integer $u$. Since $u$ is fixed, there are a fixed number of possible partitions $\lambda$. Therefore it suffices to prove that the sum over all $i_1, \ldots, i_q$ corresponding to a fixed $\lambda$ is a polynomial in $q$. For such terms, $\frac{B_{i_1}\cdots B_{i_q}}{i_1!\cdots i_q!}$ is a constant, and so it suffices to prove that the sum of $1^{i_1}\cdots q^{i_q}$ over all $i_1,\ldots,i_q$ corresponding to a fixed $\lambda$ is a polynomial in $q$. This sum is the \emph{monomial symmetric function} $m_\lambda$, evaluated at $(1,2,\ldots,q,0,0,\ldots)$. All symmetric functions may be written as sums and products of the \emph{elementary} symmetric functions \cite[Theorem 7.4.4]{Stanley2},
\[e_r(X_1,X_2,\ldots)=\sum_{1\le j_1<j_2<\cdots<j_r}X_{j_1}X_{j_2}\cdots X_{j_r},\]
so it suffices to prove that $e_r(1,2,\ldots,q,0,0,\ldots)$ is a polynomial in $q$, for fixed $r$. But this is the  coefficient of $n^{q-r}$ in $n(n+1)\cdots (n+q)$, which is a polynomial in $q$ by Example~\ref{ex:rising_fact}. The proof follows.
\end{proof}

\begin{remark} We have seen that $q!(q-1)!f_q(n)$ is a two-level quasi-polynomial of period function one, which is slightly odd, considering that for fixed $q$ the period of the quasi-polynomial $f_q(n)$ is actually $\lcm(1,2,\ldots,q)$. The discrepancy comes from the fact that $q!(q-1)!f_q(n)$ has finite depth function $\floor{(q-1)/2}$, and the leading $\floor{(q-1)/2}$ terms do indeed have period one.
\end{remark}

\section{Nonattacking knights}
\label{sec:knights}
Suppose we have an $n\times n$ chessboard with $q$ identical pieces, and each piece has the same \emph{finite} set of attacking moves $M\subseteq \ZZ\times\ZZ$. In particular, if we denote piece  $i$'s placement on the chessboard by $\vec x_i=(x_i,y_i)\in [n]^2$, then we say that piece $i$ attacks piece $j$ if $\vec x_j - \vec x_i\in M$. For example, the attacking moves for a knight are the nine vectors $(\pm 1, \pm 2)$, $(\pm 2, \pm 1)$, and $(0,0)$ (to rule out being on the same square). This is in contrast to the nonattacking queens problem, where pieces could move arbitrarily far in certain directions.

We label the $q$ identical pieces by $1,2,\ldots,q$, and let $f_q(n)$ be the number of ways to place the labeled pieces on the $n\times n$ board such that none of them are attacking each other; the traditional count would now divide by $q!$ to count unlabeled pieces. We want to show that $f_q(n)$ is a two-level polynomial, but that will not quite be true: it's only true for $n$ sufficiently large. An easy fix would be to define an ``eventual'' two-level polynomial in the obvious way: by requiring that each $f_q$ (fixed $q$) agree with the polynomial for sufficiently large $n$. We do this in the following definition:

\begin{definition}
\label{def:eventual-two-level}
Let $g$ be an eventually nonnegative numerical polynomial, let $S=\{q\in\NN:\ g(q)\ge 0\}$, and let $e:S \to \NN \cup \{-1,\infty\}$.

\begin{enumerate}
\item[(a)]  An \emph{eventual two-level polynomial} of degree function $g$ and depth function $e$ is an infinite sequence $\{f_q\}_{q\in S}$ of functions that eventually (for sufficiently large $n$) agree with a polynomial of degree $g(q)$, that has the following property: writing
\[f_q(n) = \sum_{r=0}^{\infty}c_r(q)n^{g(q)-r}\]
for sufficiently large $n$, where $c_r(q)=0$ if $r>g(q)$, then there exist polynomials $\{\phi_r(q)\}_{r\ge 0}$ such that, for every $q\in S$ and every $r \leq e(q)$, we have $c_r(q) = \phi_r(q)$. 
 \item[(b)] As before, for $q\in \NN\setminus S$, define $f_q(n)=0$ and $e(q)=-1$.
 \end{enumerate}
 \end{definition}

This is a fine definition, but a bit weak. As we shall see, the codegree $r$ term in the sum will be ``valid'' for $n\ge \ell(r)$, where $\ell(r)$ is a linear function of $r$. This is a much stronger statement, because it quantifies much better what ``eventually'' means and that for low codegree terms ``eventually'' kicks in very soon. But it is also a nonsensical statement, because, for a given $n$,  $f_q(n)$ either agrees with a given polynomial or it doesn't; that is, there is no way to say that the leading terms of $f_q(n)$ agree with a polynomial while other terms don't.

Instead, it seems that we have to write $f_q(n)$ as an explicit sum of functions that are eventually polynomial, and then note that a codegree $r$ term in the sum agrees with a polynomial for $n\ge \ell(r)$. We will derive a formula for $f_q(n)$, and then make a precise statement at the end.

We follow the schema, with the proviso that we only have \emph{eventual} polynomials in $n$, which is relevant in (S1) and (S2). Given placements $\vec x_i\in [n]^2$ of each piece ($1\le i\le q$), we define the \emph{attack graph} of this placement to have vertices $[q]$ and directed edges $i\rightarrow j$ if piece $i$ is attacking piece $j$, and we label the edge by the attacking move $m\in M$ with $\vec x_j - \vec x_i = m$. The attack graphs that are realizable by some piece placement form a lattice $\mathcal L_q$ under (label-respecting) inclusion, and so
\begin{equation}
\label{eqn:knights_sum}
f_q(n) = \sum_{G\in \mathcal L_q}\mu_{\mathcal L_q}\big([\hat{0},G]\big)f^{G}_q(n),
\end{equation}
where $f^{G}_q(n)$ is the number of placements whose attack graph \emph{contains} $G$. Let's calculate $f^{G}_q(n)$ more precisely, which will give us (S1) and (S2), for sufficiently large $n$.

Note that we can analyze each connected component of the underlying undirected graph of $G$ separately, and the final answer is simply the product of these: since there are no attacking edges between two components, they may be placed independently; we are not concerned with whether placements allow additional attacking moves, only that $G$ is a subgraph of all of the attacks. So let $C$ be a connected component of $G$. If $C$ is an isolated vertex, then the number of placements of that piece is $n^2$. Otherwise,  let $i_0$ be one of the pieces in C, and we see that every piece in $C$'s placement is determined relative to piece $i_0$. That is, there exist relative positions $\vec p_i\in \ZZ^2$ such that $\vec x_i = \vec x_{i_0}+\vec p_i$ must be true for every $i$. Let
\[a=\max_{i,j}\{p_{i1}-p_{j1}\},\quad b=\max_{i,j}\{p_{i2}-p_{j2}\}.\]
When the pieces are placed on the board, the difference in the $x$ coordinates of the furthest right and furthest left pieces will be exactly $a$, and the difference in the $y$ coordinates between the furthest up and furthest down pieces will be exactly $b$. In other words, the number of ways to place the  pieces  of $C$ in this attacking configuration is
\[(n-a)(n-b),\]
assuming $n\ge\max\{a,b\}$. However, for $n<\max\{a,b\}$, the number of attack configurations is zero, but $(n-a)(n-b)$ may evaluate to something nonzero.

Let $w$ be the largest absolute value of any coordinate of any element of $M$ ($w=2$ for knights). Then the largest $a$ or $b$ could be is $w(k-1)$, where $k$ is the number of pieces in $C$, achieved when $C$ is a directed path with each piece attacking the next along the move corresponding to $w$.

For graphs $G$ with multiple components, we put this all together now. Given $\vec a = (a_1,\ldots,a_d)\in \NN^d$, define the function
\[g_d(\vec a; n)= \begin{cases} (n-a_1)\cdots(n-a_d), & \text{if $n\ge\max a_i$,}\\
0, & \text{else.} \end{cases}\]
Suppose $G$ has $c$ components with more than one vertex, with $k_1,\ldots,k_c\ge 2$ their respective numbers of vertices. Let $s=k_1+\cdots+k_c$, and so there are $q-s$ isolated vertices in the attack graph. Then there exists $\vec a\in \NN^{2c}$ such that the number of placements whose attack graph contains $G$ is
\begin{equation}\label{knight_count}
f^{G}_q(n) = n^{2(q-s)}g_{2c}(\vec a; n),
\end{equation}
where $\max{a_i}\le w\max(k_i-1)$. Furthermore, the codegree $r$ of this term (with respect to the overall degree $2q$) is
\[r=2q - 2(q-s) - 2c = 2(s-c) = 2\sum_{i=1}^c(k_i-1) \ge 2\max(k_i-1).\]
In particular,
\[\max{a_i}\le w\max(k_i-1) \le wr/2,\]
and so if $f^{G}_q(n)$ has codegree $r$, then it agrees with
\[n^{2(q-s)}(n-a_1)\cdots(n-a_{2c}),\]
a two-level polynomial of degree function $2q-r$ and infinite depth, as long as
\[ n \ge wr/2.\]

We are ready to continue with the schema. The objects are attack graphs, the cruxes are unions of the components that are not isolated vertices, and the crux isomorphisms are (labeled, directed) graph isomorphisms. Combining Equations (\ref{eqn:knights_sum}) and (\ref{knight_count}) gives us (S1) and (S2), valid for $n$ sufficiently large. The maximum degree is $2q$ from the attack graph with empty crux (S3). Using the above notation, the size of a crux is $s$, and
\[r=2(s - c)= s + (k_1+\cdots+k_c -2c) \ge s,\]
because $k_i\ge 2$, establishing (S4). Let the foundation for a crux be the set of vertices (pieces); its size is the same as the size of the crux (S5). The isomorphisms of foundations are set bijections (S6), inducing crux isomorphisms (S7) by renaming the vertices. Cruxes on a fixed set of vertices are (labeled, directed) graphs on these vertices (with additional properties) and so the number of them is finite (S8), and the number of foundation isomorphisms (S9) with domain $[k]$ is $(q)_k$.

Using the schema and Lemma~\ref{lem:automorph}, we can write
\begin{equation}\label{eqn:knights_final}
f_q(n) = n^{2q} +  \sum_{\substack{r\ge 2,\\r\text{ even}}} \sum_{t=1}^{m_r} \frac{(q)_{k_{r,t}}}{C_{r,t}}n^{2q-2c_{r,t}-r}g_{2c_{r,t}}(\vec a_{r,t}; n),
\end{equation}
where $\max_i\left(\vec a_{r,t}\right)_i \le wr/2$.

We summarize this analysis as a theorem.

\begin{theorem}
Let $f_q(n)$ be defined as in this section. Then $f_q(n)$ is an eventual two-level polynomial of degree function $2q$ and infinite depth. Furthermore, Equation~(\ref{eqn:knights_final}) writes $f_q(n)$ as a sum whose codegree $r$ term agrees with a two-level polynomial (of degree function $2q-r$ and infinite depth) for $n\ge wr/2$.
\end{theorem}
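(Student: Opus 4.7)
The plan is to follow the schema of Theorem~\ref{thm:schema}, with the caveat that the individual summands are only \emph{eventually} polynomial in $n$. First I would write $f_q(n)$ by M\"obius inversion on the poset $\calL_q$ of realizable labeled attack graphs (edges labeled by moves in $M$), obtaining
\[f_q(n) = \sum_{G\in\calL_q}\mu_{\calL_q}([\hat{0},G])f_q^{(G)}(n),\]
where $f_q^{(G)}(n)$ counts placements whose attack graph contains $G$. This is (S1) and (S2) up to the ``eventual'' qualifier on the summands.

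Next I would analyze $f_q^{(G)}(n)$ component-by-component. Each isolated vertex of $G$ contributes a factor $n^2$. For a nontrivial connected component, fixing the position of one distinguished piece forces every other piece in that component into a specific relative position, so the component contributes $(n-a)(n-b)$ for nonnegative integers $a,b$ (the horizontal and vertical spreads of the relative positions), valid as soon as $n\ge\max\{a,b\}$; for smaller $n$ the genuine count is zero but the polynomial need not be. Multiplying over components yields a formula of the shape $f_q^{(G)}(n) = n^{2(q-s)}g_{2c}(\vec a;n)$ as in the excerpt.

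The decisive estimate is that if $G$ has $c$ nontrivial components with vertex counts $k_1,\dots,k_c$, then the codegree of $f_q^{(G)}(n)$ is $r = 2\sum_i(k_i-1) \ge 2\max_i(k_i-1)$. Since $w$ is the maximum absolute coordinate of a move in $M$, each spread is bounded by $w\max_i(k_i-1)\le wr/2$, so $f_q^{(G)}(n)$ agrees with the honest polynomial $n^{2(q-s)}(n-a_1)\cdots(n-a_{2c})$ for all $n\ge wr/2$. I would then finish with the schema, taking objects to be attack graphs, cruxes to be unions of nontrivial components (up to labeled directed graph isomorphism), foundations to be the underlying vertex sets of the cruxes, foundation isomorphisms to be set bijections, and depth $e(q)=\infty$. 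Properties (S3)--(S4) follow from the inequality $r\ge 2(s-c)\ge s$, property (S8) because there are only finitely many labeled directed graphs on a fixed vertex set, and (S9) because $\psi_A(q)=(q)_{|A|}$ is a polynomial for every $q\in\NN$ (vanishing when $q<|A|$). Proposition~\ref{prop:sequence} then assembles the per-type contributions into the explicit expression~(\ref{eqn:knights_final}).

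The main obstacle is the threshold analysis: it is essential to bound the per-component validity threshold $\max\{a,b\}$ in terms of the codegree $r$ alone, and not merely in terms of $q$ or $|G|$. Without such a bound, one could only assert that $f_q$ \emph{eventually} agrees with some polynomial, with no quantitative control over when this occurs, and the sensible picture of ``low-codegree terms kicking in first'' would collapse. The inequality $\max_i a_i \le w\max_i(k_i-1)\le wr/2$, which comes from the geometric fact that a connected component of $k$ pieces fits inside a $w(k-1)\times w(k-1)$ bounding box, is what converts the soft eventual statement into the quantitative per-codegree version stated in the theorem; the remaining verification of the schema hypotheses is largely bookkeeping.
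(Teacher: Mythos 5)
Your proposal is correct and follows essentially the same route as the paper: M\"obius inversion over the lattice of realizable labeled attack graphs, the component-wise factorization $n^{2(q-s)}g_{2c}(\vec a;n)$ with validity threshold $\max\{a,b\}$, the key bound $\max_i a_i \le w\max_i(k_i-1)\le wr/2$ tying the threshold to the codegree, and the same schema choices (cruxes as unions of nontrivial components, foundations as vertex sets, $(q)_k$ foundation isomorphisms giving infinite depth). No substantive differences from the paper's argument.
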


\begin{remark} Note that for fixed $q$, $f_q(n)$ is a polynomial, rather than merely a quasi-polynomial, unlike for the $q$-queens problem.
\end{remark}

\begin{remark}
These  functions $g_d(\vec a;n)$ could have been used in Section~\ref{sect:finite}, with $q$ rather than $n$, to get infinite sums of explicit expressions that are valid for all $q\in\NN$, rather than only for $q$ such that $e(q)\ge r$. In particular, in infinite depth examples, (S9) arose from counts such as $(q)_d$, and the key to infinite depth is that the equality
\[(q)_d = g_d\big((0,1,\ldots,d-1); q\big)\]
holds for all $q\ge 0$. In the finite depth case, we needed formulas like $(q-a)_d$, which were only valid for $q\ge a$. But the formula
\[g_d\big((a,a+1,\ldots,a+d-1); q\big)\]
would have been valid for all $q\ge 0$ because it evaluates to zero if $q<a$, by definition.
\end{remark}

\begin{remark} We phrased this problem in terms of explicit attack graphs, rather than the more generic hyperplane arrangements of Section~\ref{sect:Ehrhart}. There is an underlying subspace arrangement, but it is \emph{affine} rather than \emph{linear}. That is, an attack constraint such as $\vec x_j - \vec x_i = (2,1)$ is a flat, but it does not go through the origin. This is why the formulas are not correct for small $n$: the polynomial counts will only be valid once the flats intersect the set of placement configurations, $[n]^{2q}$. For the $q$-knights problem, we needed to keep careful track of when they begin to intersect, so that we could state explicitly for which $n$ they agree with a polynomial.
\end{remark}

\section{Future Directions}
\label{sec:conc}

We have now established that a wide variety of counting functions are two-level polynomials, uncovering a hidden structure unifying different types of combinatorial objects. A natural next step would be to look for other examples of two-level polynomials, and we believe that the schema we have described will be useful in the search.

Graph polynomials are one natural place to keep looking for two-level polynomials. We showed that some of these (chromatic, characteristic, matching polynomials) yield two level-polynomials when applied to interesting families of graphs, $\{G_q\}_{q\in\NN}$. Others (flow polynomials) seem promising, but their status remains open. And there are many more graph polynomials that we have barely examined \cite{EM1, EM2}. Many of these are specializations of the bivariate Tutte polynomial of the graph $G=(V,E)$,
\[T_G(m,n) = \sum_{A\subseteq E} (m-1)^{k(A)-k(E)}(n-1)^{k(A)+\abs{A}-\abs{V}},\]
where $k(A)$ is the number of connected components of the graph $(V,A)$. This raises the question of whether there is some generalization of two-level polynomials to multiple ``n'' variables, and whether it applies to Tutte polynomials of certain families. Though what that means is unclear: given some $f_q(m,n)$, even the notion of codegree is now much more subtle. On the other hand, functions like
\[f_{q,s}(n)=\chi_{P_q\times P_s}(n)\]
with two ``$q$'' variables seem ripe for analysis using a variation of our schema, and we believe they would have clearly-definable two-level polynomial behavior.

The \emph{transfer-matrix method} \cite[Section 4.7]{Stanley1} is likely a fruitful source of two-level polynomials. Transfer-matrices can encode complicated recursions, and Engstr\"om and Kohl \cite{EK}, for example, use them to calculate chromatic polynomials for many families of graphs. The counting functions end up being computed via matrices of the form $(M_n)^q$, where the entries of $M_n$ are polynomials in $n$. As a concrete example, by recursively keeping track of whether the last copy of $P_3$ in $P_3\times P_q$ is colored with two or three colors, we were able to calculate that
\[\chi_{P_3\times P_q}(n)=
\begin{bmatrix} n^3-3n^2+2n & n^2-n\end{bmatrix}
\begin{bmatrix}
n^3-6n^2+14n-13 & n^2-4n+5 \\
n^3-6n^2+13n-10 & n^2-3n+3
\end{bmatrix}^{q-1}
\begin{bmatrix} 1 \\ 1 \end{bmatrix}.\]
This raises questions such as: When is $(M_n)^q$ a two-level polynomial? What can we say about its depth? What interesting examples does this give us?

Biggs \cite{Bi} used these methods to examine the chromatic polynomial for $K_k\times C_q$, for fixed $k$. In this particular example, the eigenvalues (and their multiplicities) of an associated transfer-matrix turn out to be polynomials in $n$, allowing us to compute
\[ \chi_{K_k\times C_q}(n)=c_0(n)^q + (n-1)\big(-c_1(n)+(k-1)c_2(n)\big)^q+\text{lower degree terms},\]
in the notation from Remark~\ref{rmk:Biggs}. In general, an eigenvalue expansion form is very useful for understanding the asymptotics in $q$, and the two-level polynomial expansion is very useful for understanding the asymptotics in $n$, and it would be nice to understand the connection between the two in greater generality.

In Section~\ref{sect:gfs}, we gave two very different generating function arguments for counting functions being two-level polynomial, one using an exponential generating function summed over $q$, as a function of $n$, and another using an ordinary generating function summed over $n$, as a function of $q$. It would be nice to generalize either of these methods, or even find a unifying framework. We note that generating function methods are often closely tied to the transfer-matrix method.

The $q$-analog of many counting functions seem to produce candidate functions for two-level polynomials. For example, for fixed $k\in\NN$, the Gaussian binomial --- and confusingly, we will have to switch the traditional roles of $q$ and $n$ to match our notation ---  is defined to be
\[f_q(n)=\begin{bmatrix} q \\ k \end{bmatrix}_n := \frac{\left(1-n^q\right)\left(1-n^{q-1}\right)\cdots \left(1-n^{q-k+1}\right)}{\left(1-n^k\right)\left(1-n^{k-1}\right)\cdots \left(1-n\right)},\]
and one can check that it is a two-level polynomial of degree $k(q-k)$ and depth $q-k$. There are many $q$-analogs \cite{Stanley1} one could examine to look for two-level polynomial behavior.

Once you know that something is counted by a two-level polynomial, what do you do with that information? The most obvious thing to do is to try to calculate several of the leading coefficients, as functions of $q$. These coefficients give you more and more specific information about the asymptotics as $n\rightarrow \infty$. With luck, you might notice interesting patterns in these coefficients, or possibly even compute the entire function. In their collective seven-paper series on placing $q$ non-attacking chess pieces on an $n\times n$ board \cite{CHZ, CHZ2, CHZ3, CHZ4, CHZ5, CHZ6, CHZ7}, Chaiken, Hanusa, and Zaslavsky compute several coefficients for chess (and chess-like) pieces that can move arbitrarily far in specified directions. A natural place to attempt calculations, then, is to examine the $q$-knights problem (Section~\ref{sec:knights}) using similar methods. The long history and usefulness of graph polynomials mean that these are also an inviting place to attempt calculations.

Beyond simply calculating coefficients, the depth and period functions remain a bit mysterious, especially because their values are often better than the theory predicts. For example, Ehrhart theory predicts that the $q$-bishops counting function will have coefficient period functions $p(r)$ exponential in $r$. And yet \cite{CHZ6} establishes that we may simply take $p(r)=2$ for all $r$. Investigating this further seems intriguing. Similarly, experimental evidence shows that the chromatic number of $P_3\times P_q$ (and similar graph families) has much greater \emph{depth} than expected. For example, the theory guarantees that $e(q)=q$ works for the depth, but $e(q)=\infty$ for $q\le 4$ and $e(5)=14$ are the best possible depths (note that, when $q=5$, the degree is 15; the constant term does not agree with the codegree 15 polynomial). A better understanding may require more subtle combinatorial techniques. This current paper is the first time that we believe the concept of depth has been formally defined, and there is still much to understand about it. For example, the theory predicts the same depth for all of the different graph polynomials that we've proven anything about, though experimentally this is unclear. Are the depths the same?

Above all, we see the two-level polynomial idea as creating a common language for ideas that have popped up in many areas of combinatorics. We hope that the formalization and tools provided here can be a starting point for investigating all of these questions.

\section*{Acknowledgements} Tristram Bogart was supported by internal research grant INV-2025-213-3438 from the Faculty of Sciences of the Universidad de los Andes.

\end{document}